\newtheorem{Proposition}{Proposition}[section]
\newtheorem{Theorem}[Proposition]{Theorem}
\newtheorem{Lemma}[Proposition]{Lemma}
\newtheorem{Claim}[Proposition]{Claim}
\theoremstyle{definition}
\newtheorem{Remark}[Proposition]{Remark}
\newtheorem{Example}[Proposition]{Example}
\newtheorem{Problem}[Proposition]{Problem}
\title{Prehomogeneous vector spaces obtained from triangle arrangements}
\author{Takeyoshi Kogiso\thanks{Department of Mathematics, Josai University, 1-1 Keyakidai, Sakado, Saitama, 350-0295,
Japan. e-mail: kogiso@math.josai.ac.jp}
\ and Hideto Nakashima\thanks{The Institute of Statistical Mathematics
Midori-cho 10-3, Tachikawa, Tokyo 190-8562, 
Japan. e-mail: hideto@ism.ac.jp}}
\date{}
\newcommand{\C}{\mathbb{C}}
\newcommand{\pmat}[1]{\begin{pmatrix}#1\end{pmatrix}}
\newcommand{\smat}[1]{\left(\begin{smallmatrix}#1\end{smallmatrix}\right)}
\newcommand{\bs}[1]{\boldsymbol{#1}}
\newcommand{\ds}{\displaystyle}
\newcommand{\set}[2]{\left\{#1;\,#2\right\}}
\newcommand{\innV}[2]{\left\langle#1\,\middle|\,#2\right\rangle}
\begin{document}

\maketitle

\begin{abstract}
In this paper, we construct a new series of prehomogeneous vector spaces
from figures made up of triangles, called triangle arrangements.
Our main theorem states that,
under suitable assumptions,
we are able to construct a prehomogeneous vector space obtained from
a triangle arrangement 
by attaching two triangle arrangements corresponding to prehomogeneous vector spaces at a vertex.
We also give examples of 
prehomogeneous vector spaces obtained from triangle arrangements.
Many of them seem to be new.
\end{abstract}

\section*{Introduction}

The theory of prehomogeneous vector spaces, constructed by M.\ Sato \cite{MSato} 
(see also Sato--Shintani \cite{SatoShintani}, Kimura \cite[Introduction]{Kimura})
enables us to construct zeta functions satisfying functional equations systematically.
The key fact is that
basic relative invariants satisfy a local functional equation,
that is,
the Fourier transform of a product of complex powers of basic relative invariants
is essentially given by a product of complex powers of some polynomials.
It is known that 
some polynomials which are not basic relative invariants of any prehomogeneous vector space satisfy a functional equation
(cf.\ Faraut--Kor{\'a}nyi~\cite{FK94}, Kogiso--Sato~\cite{KogisoSato1,KogisoSato2}).
Local functional equations are also studied in the fields of algebraic geometry and projective geometry (cf.\ Etingof--Kazhdan--Polishchuk~\cite{EKP}),
and in these fields, 
they are related homaloidal polynomials which are homogeneous polynomials whose gradient-log maps are bi-rational.
Many authors including \cite{ChaputSabatino,CRS19,CRS08,D,IK,MS} deal with homaloidal polynomials,
and basic relative invariants of regular prehomogeneous vector spaces are recognized as
good examples of homaloidal polynomials (cf.\ \cite{EKP}).
Therefore,
finding new concrete examples of regular prehomogeneous vector spaces
is important both for the theory of prehomogeneous vector spaces and algebraic geometry.

In this paper,
we construct a new series of prehomogeneous vector spaces
from figures made up of triangles, called \textit{triangle arrangements}.
Our main theorem, Theorem~\ref{theo} states that,
under suitable assumptions,
we are able to construct a prehomogeneous vector space obtained from
a triangle arrangement 
by attaching two triangle arrangements corresponding to prehomogeneous vector spaces at a vertex.
We also give examples of 
prehomogeneous vector spaces obtained from triangle arrangements
in Section~\ref{sect:example}.
Combining results in Section~\ref{sect:example} with Theorem~\ref{theo},
we are able to construct a lot of prehomogeneous vector spaces.
Many of them seem to be new.

We organize this paper as follows.
Section~\ref{sect:preliminaries} collects a basic tool that we need later.
In particular,
a notion of triangle arrangements is introduced. 
In Section~\ref{sect:triangulation},
we view triangulation of convex polygons as triangle arrangements and consider which triangulation corresponds to a prehomogeneous vector space.
Section~\ref{sect:structure} is devoted to
study a structure of Lie algebras corresponding to triangle arrangements which have no edge sharing.
Our main theorem, Theorem~\ref{theo} is stated and proved in Section~\ref{sect:main}.
In Section~\ref{sect:example},
we give four examples of triangle arrangements which correspond to prehomogeneous vector spaces.






\subsubsection*{Acknowledgments.}
The fist author is supported by 
the Grant-in-Aid of scientific research of JSPS No.\ 21K03169.
The second author was supported by the Grant-in-Aid for JSPS fellows (2018J00379).
\section{Preliminaries}
\label{sect:preliminaries}

Let $V=\C^n$.
We denote the natural representation of $GL(V)=GL(n,\C)$ on $V$ by $\rho$.
For a given homogeneous polynomial $p(x)$ on $V$,
we introduce a group $G[p]:=GL(1)\times G_0[p]$ where
\[
G_0[p]:=\set{g\in GL(V)}{p\bigl(\rho(g)x\bigr)=p(x)\ \text{for all }x\in V}.
\]
Then,
it is easily verified that $G[p]$ is an algebraic subgroup of $GL(V)$.
By definition,
we see that $p(x)$ is relatively invariant under the action of $G[p]$.
We also use the symbol $\rho$ for the action of $G[p]$ on $V$.
In this paper,
we work on the following problem.

\begin{Problem}
For which homogeneous polynomial $p(x)$
a triplet $(G[p],\rho,V)$ admits a structure of a prehomogeneous vector space?
\end{Problem}

As in \cite{Kimura},
the prehomogeneity is an infinitesimal condition so that
we shall describe the condition of a triplet $(G[p],\rho,V)$ being a prehomogeneous vector space
in terms of Lie algebra.
Let $\mathfrak{g}_0[p]$ be the Lie algebra corresponding to $G_0[p]$.
A bilinear form $\innV{\cdot}{\cdot}$ on $V$ is defined to be
\[\innV{x}{y}={}^{t\!}xy=\sum_{i=1}^nx_iy_i\quad(x,y\in V).\]
Then, we have
\[
\mathfrak{g}_0[p]:=\set{M\in\mathfrak{gl}(V)}{\innV{d\rho(M)x}{\nabla_x p(x)}=0\text{ for all }x\in V},
\]
where $d\rho$ is a differential of $\rho$.
Thus, the Lie algebra $\mathfrak{g}[p]$ of $G[p]$ is given as
\begin{equation}
\label{eq:InvLie}
\mathfrak{g}[p]=\mathfrak{gl}(1)\,\dot+\,\mathfrak{g}_0[p].
\end{equation}
Here,
the symbol $\dot+$ means a direct sum of vector spaces.
By \cite[Proposition 2.2]{Kimura},
we see that the condition of the triplet $(G[p],\rho,V)$ being a prehomogeneous vector space
is described by using its Lie algebra $\mathfrak{g}[p]$ as follows.

\begin{Lemma}[{cf.\ \cite[Proposition 2.2]{Kimura}}]
\label{lemma}
The triplet $(\mathfrak{g}[p],d\rho,V)$ admits a structure of prehomogeneous vector space
if and only if
linear maps $A(x)\colon\mathfrak{g}[p]\to V$ $(x\in V)$, defined by $A(x)M:=d\rho(M)x$ $(M\in\mathfrak{g}[p])$
have full generic rank.
\end{Lemma}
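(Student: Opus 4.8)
The plan is to reduce the statement to the standard fact that a connected linear algebraic group acting on an irreducible variety defines a prehomogeneous vector space exactly when it has a Zariski-open orbit, and then to translate ``open orbit'' into the rank condition via the infinitesimal description of orbits. Since prehomogeneity of $(G[p],\rho,V)$ only involves the identity component $G:=G[p]^{\circ}$, and $\mathrm{Lie}(G)=\mathfrak{g}[p]$, we may replace $G[p]$ by $G$; recall also that ``$A(x)$ has full generic rank'' means that the maximal (equivalently, generic) value of $\operatorname{rank}A(x)$ over $x\in V$ equals $n=\dim V$, which in particular forces $\dim\mathfrak{g}[p]\ge n$.

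First I would identify the tangent spaces to orbits. Fix $x\in V$ and let $\mu_{x}\colon G\to V$, $g\mapsto\rho(g)x$, be the orbit map; its image is the orbit $G\cdot x$, which is an irreducible, smooth, locally closed subvariety of $V$, and the differential of $\mu_{x}$ at the identity is $M\mapsto d\rho(M)x=A(x)M$. Hence $\mathrm{Im}\,A(x)\subseteq T_{x}(G\cdot x)$, and since the ground field $\C$ has characteristic zero the dominant morphism $G\to G\cdot x$ is separable, so this inclusion is an equality; in particular $\dim G\cdot x=\operatorname{rank}A(x)$ for every $x\in V$.

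Next I would invoke semicontinuity: the entries of $A(x)$ depend polynomially on $x$, so $x\mapsto\operatorname{rank}A(x)$ is lower semicontinuous for the Zariski topology and equals its maximal value $r$ on a dense Zariski-open set $U\subseteq V$. If $r=n$, choose any $x_{0}\in U$; then $G\cdot x_{0}$ is an irreducible locally closed subset of the irreducible variety $V$ of full dimension $n=\dim V$, hence dense, hence open, so $(G,\rho,V)$ is a prehomogeneous vector space. Conversely, if $\Omega$ is an open $G$-orbit, then for $x\in\Omega$ we have $V=T_{x}V=T_{x}\Omega=\mathrm{Im}\,A(x)$, so $\operatorname{rank}A(x)=n$ and therefore $r=n$. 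This proves the equivalence.

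The one delicate point is the equality $T_{x}(G\cdot x)=\mathrm{Im}\,A(x)$: it is precisely here that characteristic zero is used (through separability / generic smoothness of orbit maps), and it is this identification that converts the a priori geometric notion of a dense orbit into the linear-algebraic criterion of the Lemma. The remaining ingredients --- lower semicontinuity of the rank of a polynomial family of linear maps, and the fact that a dense locally closed subset of an irreducible variety is open --- are routine, so no genuine obstacle remains beyond bookkeeping.
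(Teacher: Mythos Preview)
The paper does not actually supply a proof of this lemma; it simply records the statement and defers to \cite[Proposition~2.2]{Kimura}. Your argument is correct and is precisely the standard one behind that reference: identify the tangent space to an orbit with the image of $A(x)$ via the differential of the orbit map (using characteristic zero for separability), then use lower semicontinuity of rank to pass between ``generic full rank'' and ``existence of an $n$-dimensional, hence open, orbit''. Nothing is missing; your write-up could serve as an expanded footnote for the citation.
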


In what follows,
we concentrate the case of homogeneous polynomials $p(x)$ of degree three,
in particular,
those constructed from figures made up of triangles.

Triangle arrangements are figures made up of triangles in such a way that
finite triangles are glued at some vertices or some edges.
In what follows, the symbol $\mathtt{T}$ denote triangle arrangements.
We label number $1,2,3,\dots$ to  vertices of a triangle arrangement $\mathtt{T}$.
We assign variable $x_i$ to vertex $i$ and,
to each triangle with vertices $i,j,k$ in $\mathtt{T}$, we associate a monomial $x_ix_jx_k$.
Then, we construct a polynomial $p(x)$ from $\mathtt{T}$ by
summing up monomials $x_ix_jx_k$ with respect to each triangle with vertices $i,j,k$ in $\mathtt{T}$.
We call $p(x)$ a polynomial with respect to $\mathtt{T}$, or more simply a polynomial of $\mathtt{T}$.

For brevity,
we call a polynomial $p(x)$ is prehomogeneous if
the triplet $(\mathfrak{g}[p],d\rho,V)$ is a prehomogeneous vector space.
Moreover,
if $p(x)$ is obtained from a triangle arrangement, then we also say that $\mathtt{T}$ is prehomogeneous.
In this case,
we often write $\mathfrak{g}[\mathtt{T}]$ instead of $\mathfrak{g}[p]$,
where $p(x)$ is a polynomial of $\mathtt{T}$.

\begin{Example}
\label{exam:TrigArr}
The following figures are three examples of triangle arrangements.
\begin{center}
\begin{tabular}{c@{\hspace{3em}}c@{\hspace{3em}}c}
\includegraphics[scale=0.5]{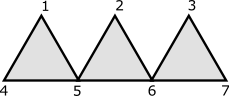}
&
\includegraphics[scale=0.5]{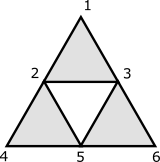}
&
\includegraphics[scale=0.5]{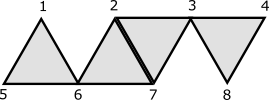}\\
$\mathtt{T}_A$&$\mathtt{T}_B$&$\mathtt{T}_C$
\end{tabular}
\end{center}
If we assign a monomial to each grayed triangles,
then the corresponding polynomials are given as follows.
\[
\begin{array}{l}
p_A(x)=x_1x_4x_5+x_2x_5x_6+x_3x_6x_7\\
p_B(x)=x_1x_2x_3+x_2x_4x_5+x_3x_5x_6\\
p_C(x)=x_1x_5x_6+x_2x_6x_7+x_2x_3x_7+x_3x_4x_8
\end{array}
\]
\end{Example}

Let $\mathtt{T}$ be a triangle arrangement with $n$ vertices.
Set
\[
\mathcal{T}:=\set{T=\{i,j,k\}\subset [n]}{\text{a triangle of vertices $i,j,k$ is contained in $\mathtt{T}$}},
\]
where $[n]:=\{1,2,\dots,n\}$.
We call $\mathcal{T}$ a hypergraph with respect to $\mathtt{T}$.
For each vertex $i\in [n]$,
the set $\mathcal{T}(i)$ consists of $T\in\mathcal{T}$ including $i$, that is,
\[
\mathcal{T}(i):=\set{T\in\mathcal{T}}{i\in T}.
\]
If a vertex $i$ satisfies $\sharp\mathcal{T}(i)=1$, 
then $i$ is said to be an \textit{isolated vertex}.
If $\mathcal{T}$ contains two triangles $T_1$, $T_2$  such that
$\sharp(T_1\cap T_2)=2$, then we say that $\mathtt{T}$ has edge sharing.

For example,
$\mathtt{T}_A$ in Example \ref{exam:TrigArr} have
\[
\mathcal{T}=\bigl\{\{1,4,5\},\,\{2,5,6\},\,\{3,6,7\}\bigr\},\quad
\mathcal{T}(5)=\bigl\{\{1,4,5\},\,\{2,5,6\}\bigr\},
\]
and isolated vertices are $\{1,2,3,4,7\}$.
The triangle arrangement $\mathtt{T}_A$ does not have an edge sharing,
whereas $\mathtt{T}_C$ does.

\section{Triangulation of convex polygons}
\label{sect:triangulation}

In this section, we view triangulation of convex polygons as triangle arrangements
and consider those prehomogeneity.
Since prehomogeneity is independent of the action of $GL(V)$ on $p(x)$,
we first make a reduction of triangulation of polygons 
in order to decrease cases which we consider.
Let us explain this reduction by a concrete example.

The polynomial $p(x)$ associated with Figure \ref{fig:explanation} (left) is described as
\[
p(x)=x_1x_2x_3+x_1x_3x_4+x_1x_4x_5+x_1x_5x_6.
\]
If we change variables 
\[
z_2=x_2+x_4,\quad z_i=x_i\quad(i=1,3,4,5,6),
\]
then $p(x)$ transfers to
\[
p(x)=x_1x_3(x_2+x_4)+x_1x_4x_5+x_1x_5x_6=z_1z_2z_3+z_1z_4z_5+z_1z_5z_6.
\]
Thus, we can decrease numbers of monomials.
In terms of figures,
we focus on the vertex $2$ and a triangle $123$, 
and vanish a triangle sharing edges with the triangle $123$.
This polynomial can be further transferred by changing variables
\[
w_6=z_6+z_4,\quad
w_i=z_i\quad(i=1,2,3,4,5)
\]
to
\[
p(z)=z_1z_2z_3+z_1z_5(z_4+z_6)=w_1w_2w_3+w_1w_5w_6,
\]
and we finally get a polynomial consisting of two monomials.
Along this reduction,
triangles having the vertex $4$ disappear.
Since the original triangulation have $6$ vertex,
we should calculate for $5$-variable polynomial $p(w)$ on $6$-dimensional vector space $\C^6$.

\begin{figure}[b]
\centering
    \begin{tabular}{ccccc}
    \includegraphics[scale=0.17]{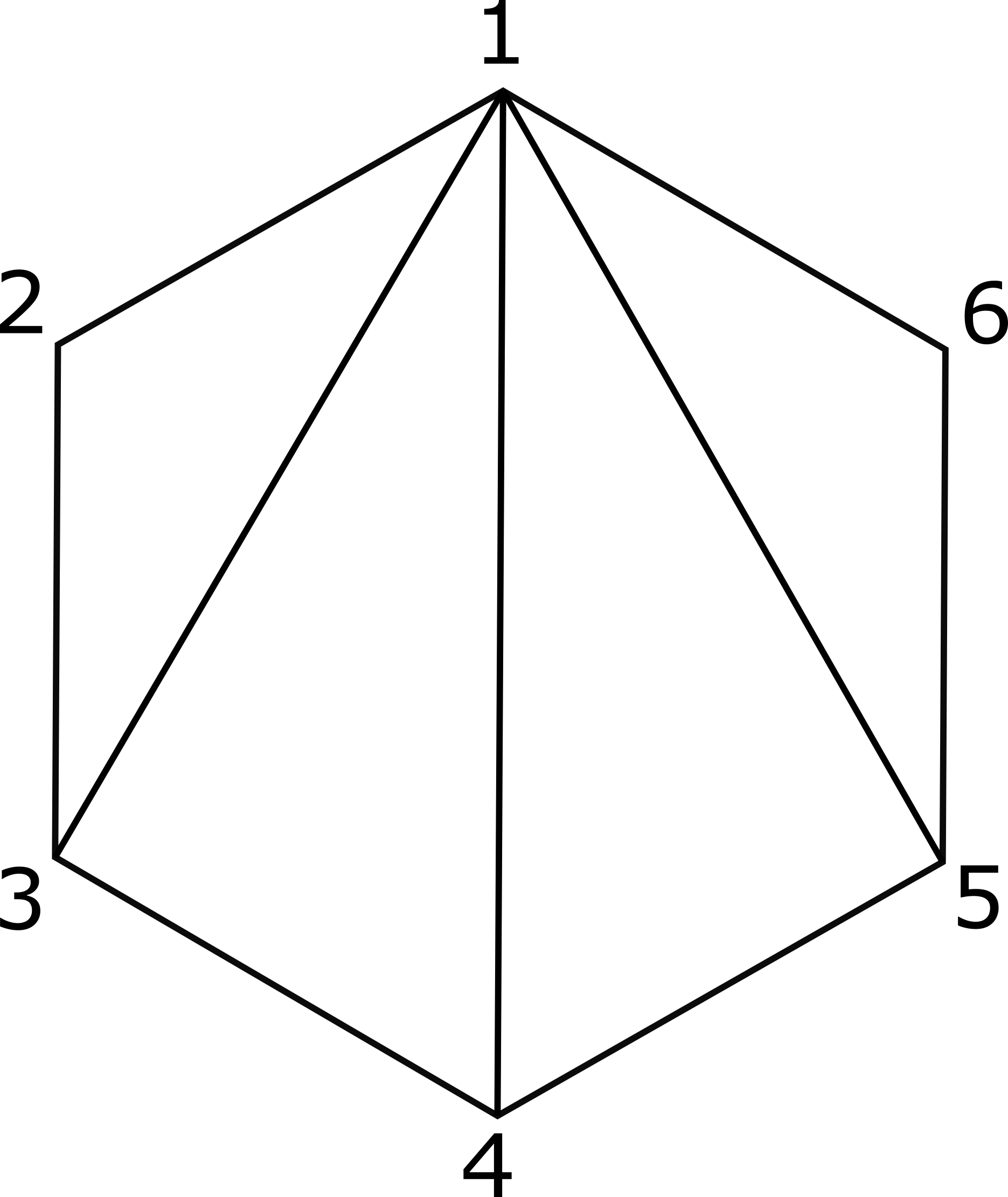}
    &&
    \includegraphics[scale=0.17]{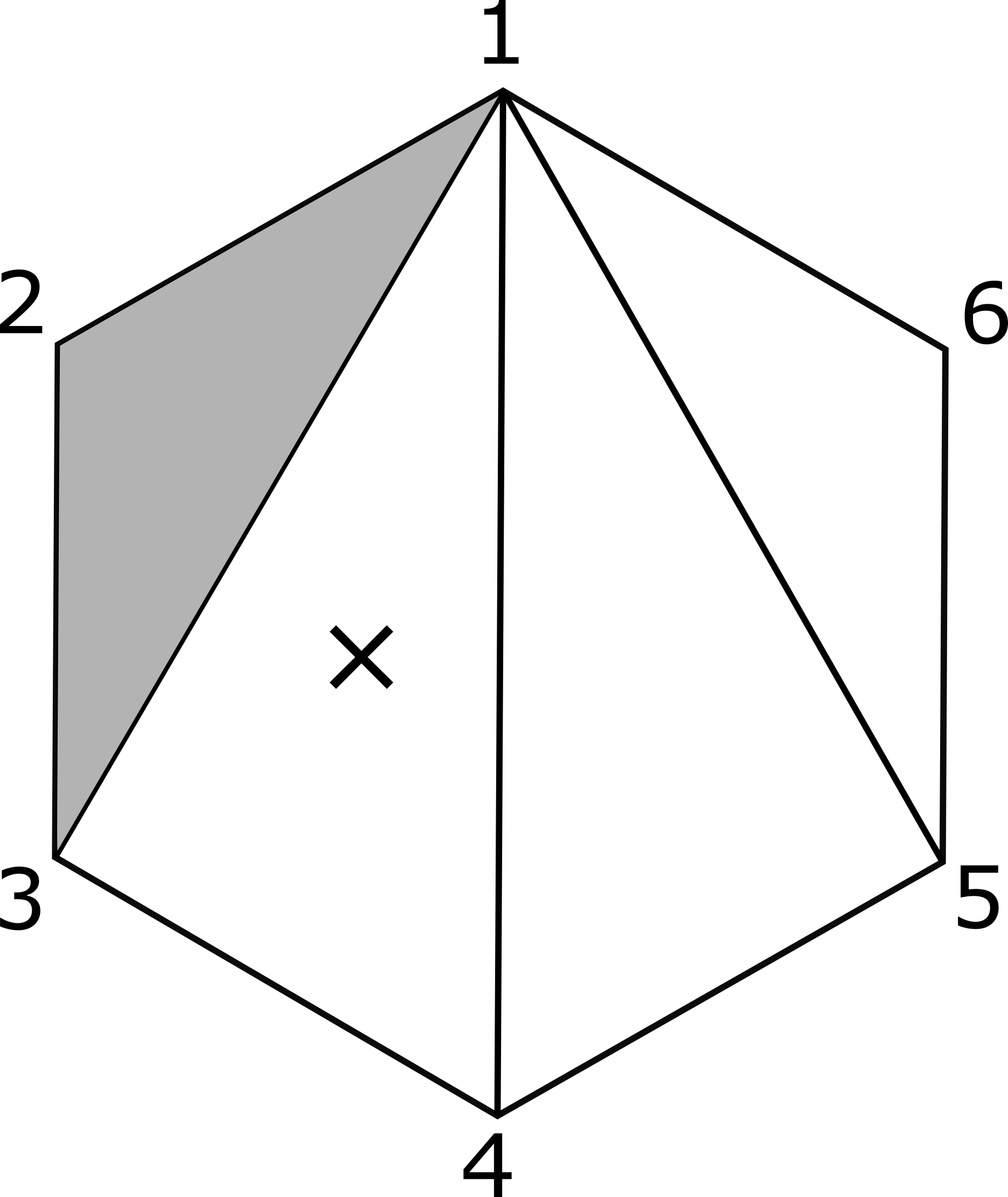}
    &&
    \includegraphics[scale=0.17]{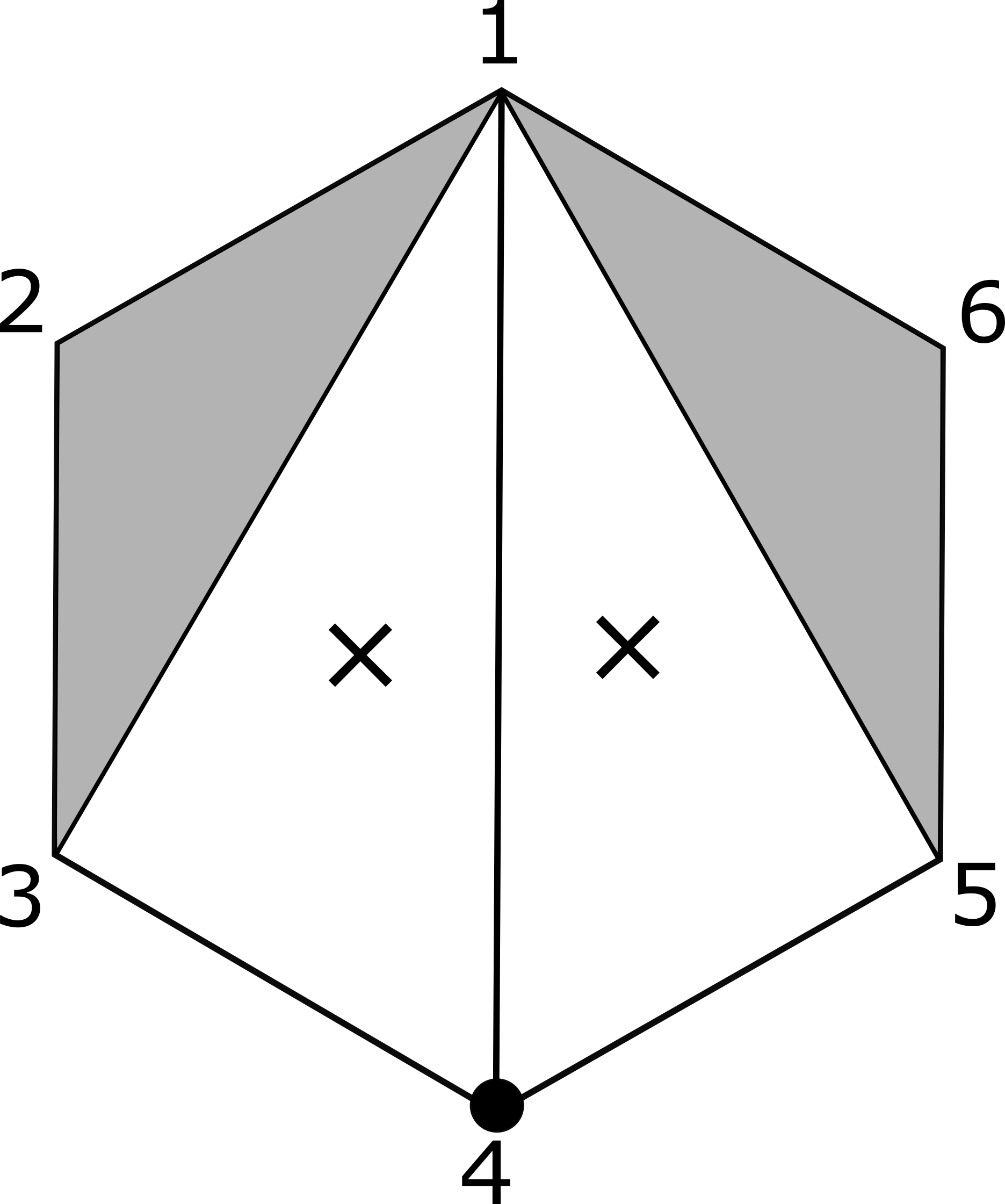}
    \end{tabular}
    \caption{Reduction of a hexagon triangulation}
    \label{fig:explanation}
\end{figure}

In Figure~\ref{fig:trig}, we exhibit triangle arrangements obtained by reduction of triangulation of $n$-polygons up to $n=10$.
We can see prehomogeneity at the top of each figure.
Its proofs are left to Section~\ref{sect:example}.
A black circle $\bullet$ in figures indicates a vertex which does not appear as a vertex of triangles, like the vertex $4$ in the above example.
For simplicity, 
triangle arrangements with a black circle are also called just 
\textit{triangle arrangements}.
Note that, if a triangle arrangement $\mathtt{T}$ is a triangle arrangement $\mathtt{T}'$ with a black circle ($\mathtt{T}'$ does not have a black circle),
then the corresponding Lie algebras $\mathfrak{g}[\mathtt{T}]$ and $\mathfrak{g}[\mathtt{T}']$ are related as
\[
\mathfrak{g}[\mathtt{T}]=\set{M=\pmat{M'&0\\ {}^t\bs{x}&m}}{M'\in\mathfrak{g}[\mathtt{T}'],\,\bs{x}\in\C^{\sharp \mathtt{T}'},\,m\in\C}.
\]
In particular,
the prehomogeneity of $\mathtt{T}$ is the same as that of $\mathtt{T}'$.

We see from Figure~\ref{fig:trig} that
triangle arrangements obtained by reduction of triangulation of $n$-gons may unconnected.
We shall show in Proposition~\ref{prop:unconnected} 
that they cannot be prehomogeneous unless one of the connected components is a black circle discussed in the previous paragraph.

Table~\ref{table:numbers} includes numbers of 
(A) triangulation of $n$-polygons under rotations and reflections, 
(B) those of reduced triangle arrangements and 
(C) those of prehomogeneous vector spaces.

\begin{table}
    \centering
    \begin{tabular}{l|*{15}{c}}
    $n$&6&7&8&9&10&11&12&13&14&15&16&17\\ \hline
    \text{(A)}&3&4&12&27&82&228&733&2282&7528&24834&83898&285357\\
    \text{(B)}&3&2&7&7&26&37&137&298&993&2726&8749&26446\\
    \text{(C)}&2&2&4&3&9&7&23&18&61&56&174&186
    \end{tabular}
    \caption{The row of (A) indicates numbers of triangulation of $n$-polygons up to rotations and reflections, 
    (B) those of reduced triangle arrangements and 
    (C) those of prehomogeneous vector spaces.}
    \label{table:numbers}
\end{table}

\section{Structure of $\mathfrak{g}[p]$ without edge sharing}
\label{sect:structure}

Let $\mathtt{T}$ be a triangle arrangement with $n$ vertices and $p(x)$ the corresponding polynomial.
Suppose that $\mathtt{T}$ has no edge sharing.
In this section,
we investigate a structure of $\mathfrak{g}[p]$,
which will be needed to prove our main theorem.
By \eqref{eq:InvLie}, it is enough to calculate $\innV{d\rho(M)x}{\nabla_xp(x)}=0$.
By definition,
$p(x)$ can be described by using the hyper graph $\mathcal{T}$ associated with $\mathtt{T}$ as
\[
p(x)=\sum_{\{i,j,k\}\in\mathcal{T}}x_ix_jx_k.
\]
Thus, we have
\begin{equation}
\label{eq:expand}
\begin{array}{r@{\ }c@{\ }l}
\innV{d\rho(M)x}{\nabla_xp(x)}
&=&
\ds
\sum_{i=1}^n\sum_{a=1}^nM_{ia}x_a\cdot \sum_{\{i,j,k\}\in\mathcal{T}(i)}x_jx_k\\
&=&
\ds
\sum_{i=1}^n\sum_{a=1}^n\sum_{\{i,j,k\}\in\mathcal{T}(i)}M_{ia}x_ax_jx_k.
\end{array}
\end{equation}

We first exhibit a calculation of a Lie algebra of a concrete polynomial.

\begin{Example}
\label{exam:lie}
Let $p(x)$ be a homogeneous polynomial whose hyper graph is given as $\mathcal{T}=\{\{1,2,3\},\{1,4,5\}\}$,
that is,
\[
p(x)=x_1x_2x_3+x_1x_4x_5.
\]
In this case, we have
\[
\begin{array}{l}
\innV{d\rho(M)x}{\nabla_xp(x)}\\
\quad=M_{11}x_1(x_2x_3+x_4x_5)
+M_{12}x_2(x_2x_3+x_4x_5)+M_{13}x_3(x_2x_3+x_4x_5)\\
\quad\qquad
+M_{14}x_4(x_2x_3+x_4x_5)
+M_{15}x_5(x_2x_3+x_4x_5)\\
\qquad
+M_{21}x_1\cdot x_1x_3
+M_{22}x_2\cdot x_1x_3
+M_{23}x_3\cdot x_1x_3
+M_{24}x_4\cdot x_1x_3
+M_{25}x_5\cdot x_1x_3\\ \qquad
+M_{31}x_1\cdot x_1x_2
+M_{32}x_2\cdot x_1x_2
+M_{33}x_3\cdot x_1x_2
+M_{34}x_4\cdot x_1x_2
+M_{35}x_5\cdot x_1x_2\\ \qquad
+M_{41}x_1\cdot x_1x_5
+M_{42}x_2\cdot x_1x_5
+M_{43}x_3\cdot x_1x_5
+M_{44}x_4\cdot x_1x_5
+M_{45}x_5\cdot x_1x_5\\ \qquad
+M_{51}x_1\cdot x_1x_4
+M_{52}x_2\cdot x_1x_4
+M_{53}x_3\cdot x_1x_4
+M_{54}x_4\cdot x_1x_4
+M_{55}x_5\cdot x_1x_4
\end{array}\]
and hence
\[
\begin{array}{l}
\innV{d\rho(M)x}{\nabla_xp(x)}\\
\quad=
(M_{11}+M_{22}+M_{33})x_1x_2x_3
+
(M_{11}+M_{44}+M_{55})x_1x_4x_5\\
\qquad
+(M_{24}+M_{53})x_1x_3x_4+(M_{25}+M_{43})x_1x_3x_5\\
\qquad
+(M_{34}+M_{52})x_1x_2x_4+(M_{35}+M_{42})x_1x_2x_5\\
\qquad
+
M_{12}x_2^2x_3
+
M_{12}x_2x_4x_5
+
M_{13}x_2x_3^2
+
M_{13}x_3x_4x_5
+
M_{14}x_2x_3x_4
+
M_{14}x_4^2x_5\\
\qquad+
M_{15}x_2x_3x_5
+
M_{15}x_4x_5^2
+
M_{21}x_1^2x_3
+
M_{23}x_1x_3^2
+
M_{31}x_1^2x_2
+
M_{32}x_1x_2^2\\
\qquad
+
M_{41}x_1^2x_5
+
M_{45}x_1x_5^2
+
M_{51}x_1^2x_4
+
M_{54}x_1x_4^2.
\end{array}
\]
Thus, solving the equation $\innV{d\rho(M)x}{\nabla_xp(x)}=0$,
we need have all coefficients of each monomial must be equal to zero, 
and hence
$\mathfrak{g}[p]$ consists of matrices of the form
\[
t\cdot I_5+\pmat{
M_{11}&0&0&0&0\\
0&M_{22}&0&M_{24}&M_{25}\\
0&0&M_{33}&M_{34}&M_{35}\\
0&-M_{35}&-M_{25}&M_{44}&0\\
0&-M_{34}&-M_{24}&0&M_{55}
},
\quad
\begin{array}{l}
M_{11}+M_{22}+M_{33}=0,\\
M_{11}+M_{44}+M_{55}=0
\end{array}
\]
where $t\in\C$ and $M_{ij}\in \C$.
Here,
$I_5$ is the identity matrix of size $5$.
\end{Example}

As in Example \ref{exam:lie},
it is important to find out how many times each monomial $x_ax_jx_k$ appears.
In what follows.
we investigate it for $\mathtt{T}$ without edge sharing in detail enough to prove our main theorem.

Let $\mathtt{T}$ be a triangle arrangement with $n$ vertices.
Suppose that $\mathtt{T}$ has no edge sharing.

Fix a vertex $i$.
Then, a monomial $x_ax_jx_k$ appears for each triangle $\{i,j,k\}\in\mathcal{T}(i)$
and for each $a\in I_n$.
Let us find out from which vertex the monomial $x_ax_jx_k$ appears.

At first,
there are no duplicate in terms which are given from the vertex $i$.
In fact,
let us suppose that the monomial $x_ax_jx_k$ appears from triangles in $\mathcal{T}(i)$ other than $\{i,j,k\}$.
Then, at least one of $\{i,a,j\}$ and $\{i,a,k\}$ is included in $\mathcal{T}(i)$
because two of $x_a$, $x_j$ and $x_k$ come from partial derivative of $p(x)$.
In this case, however,  $\mathtt{T}$ has edge sharing, which leads to a contradiction.

Next, assume that the monomial $x_ax_jx_k$ comes from a vertex $l\in I_n$ such that $l\ne i$.
Then, we see that
at least one of $\{a,j,l\}\in\mathcal{T}$ and $\{a,k,l\}\in\mathcal{T}$ need satisfy
by the same reason to the case $i$.
In this situation, let us consider the positions of $i$, $a$ and $l$ in the graph consisting of edges of all triangles in $\mathtt{T}$.
Let us denote by $d_{\rm graph}$ the graph distance.
If $d_{\rm graph}(i,a)\ge 3$,
then there are no vertex $l$ satisfies $\{a,j,l\}\in\mathcal{T}$ or $\{a,k,l\}\in\mathcal{T}$
and hence the monomial $x_ax_jx_k$ never appear from vertices other than $i$.
This means that $M_{ia}=0$ whenever $d_{\rm graph}(i,a)\ge 3$,
or equivalently,
$M_{ia}\ne 0$ occurs only if $d_{\rm graph}(i,a)\le 2$.

\begin{enumerate}
\item[(0)] The case $d_{\rm graph}(i,a)=0$, that is, $a=i$.
In this case,
we have by setting $a=i$ in \eqref{eq:expand}
\[
\sum_{i=1}^nM_{ii}x_i\sum_{\{i,j,k\}\in\mathcal{T}(i)}x_jx_k
=
\sum_{\{i,j,k\}\in\mathcal{T}}(M_{ii}+M_{jj}+M_{kk})x_ix_jx_k,
\]
which leads to the following conditions.
\[
M_{ii}+M_{jj}+M_{kk}=0\quad\text{if }\{i,j,k\}\in\mathcal{T}.
\]
\item[(1)]
The case $d_{\rm graph}(i,a)=1$.
Assume that $\{i,j,k\},\,\{i,s,t\}\in\mathcal{T}(i)$ with $\{j,k\}\ne\{s,t\}$.
Suppose $a=j$.
Then, we have $x_ax_jx_k=x_j^2x_k$.
If the monomial $x_j^2x_k$ arises from the other vertex $l$,
then the triangle $\{j,k,l\}$ must included in $\mathcal{T}$,
but then $\mathcal{T}$ has edge sharing at the edge $jk$.
It creates a contradiction.
Next, we suppose that $a=s$.
Although the monomial $x_ax_jx_k=x_sx_jx_k$ has no information,
we know that $\partial_ip(x)$ have a monomial $x_sx_t$ so that
$x_ax_sx_t=x_s^2x_t$ appears in \eqref{eq:expand}.
Then, by the same reason on the case $a=j$, we face a contradiction and hence
we conclude that, if $\mathtt{T}$ has no edge sharing, then $M_{ia}=0$ whenever $d_{\rm graph}(i,a)=1$.

\item[(2)]
The case $d_{\rm graph}(i,a)=2$.
Let $\{j,a,b\}\in\mathcal{T}$, that is,
the vertex $a$ is linked to $j$.
The assumption that there are no edge sharing implies $k\not\in\{a,b\}$.

(2-i)
At first, suppose that the vertex $i$  is an isolated vertex.
In this case,
the partial derivative $\partial_ip(x)$ is exactly a monomial $x_jx_k$
so that $x_a\partial_ip(x)=x_ax_jx_k$,
and its factor $x_ax_j$ appears as a monomial in $\partial_bp(x)$.
Hence, if $\sharp\mathcal{T}(b)=1$, then we obtain
\[
x_ax_jx_k=x_a\partial_ip(x)=x_k\partial_bp(x)
\]
and this monomial never arises from the other vertices so that
we obtain a condition
\begin{equation}\label{eq:key eq}
M_{ia}+M_{bk}=0.
\end{equation}
Assume that $\sharp\mathcal{T}(b)\ge 2$.
If there are no $T\in\mathcal{T}(b)$ such that $k\in T$,
then we need have $M_{bk}=0$ and hence we also have $M_{ia}=0$.
For the case that there exists $T\in\mathcal{T}(b)$ such that $k\in T$,
we do not discuss in detail and only give one remark as follows.
If we set $T=\{k,b,c\}$,
then $c\not\in\{i,j,k,a,b\}$ because there are no edge sharing
and hence 
the following three triangles $\{i,j,k\}$, $\{j,a,b\}$ and $\{k,b,c\}$ form a ring of triangles $\mathtt{T}_B$
as in Example~\ref{exam:TrigArr}.

(2-ii)
Next, suppose that the vertex $i$ is not an isolated vertex.
Let $\{i,j,k\}$, $\{i,s,t\}\in \mathcal{T}(i)$ with $\{j,k\}\cap\{s,t\}=\emptyset$.
In this case,
polynomial $x_a\partial_ip(x)$ includes two monomials $x_ax_jx_k$ and $x_ax_sx_t$.
If the vertex $a$ is not linked to $s$ and $t$,
then 
the monomials $x_ax_s$ and $x_ax_t$ cannot appear when we differentiate $p(x)$
so that we obtain $M_{ia}=0$.
For the case that the vertex $a$ is linked to $s$ or $t$,
then we do not discuss in detail and only give one remark as follows.
If we set $\{a,c,t\}\in\mathcal{T}$,
then $c\not\in\{a,b,i,j,s,t\}$ because there are no edge sharing ($c=k$ may occur),
so that $\mathtt{T}$ must include at least one of the following triangle arrangements.
\begin{center}
\begin{tabular}{c@{\hspace{3em}}c}
\includegraphics[scale=0.5]{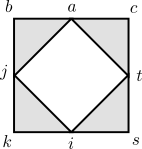}
&
\includegraphics[scale=0.7]{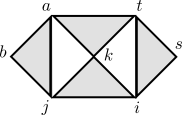}
\end{tabular}
\end{center}
\end{enumerate}

\section{Attaching two triangle arrangements at a vertex}
\label{sect:main}

In this section,
we present our main theorem stating that
we are able to construct a prehomogeneous triangle arrangement
by attaching two prehomogeneous triangle arrangements at a vertex.

\begin{Theorem}
\label{theo}
Let $\mathtt{T}_\nu$ $(\nu=1,2)$ be two prehomogeneous triangle arrangements with no edge sharing
and let $(\mathfrak{g}_\nu,d\rho_\nu,V_\nu)$ be the corresponding prehomogeneous vector spaces.
Suppose that
there exist vertices $0^{(\nu)}$ of $\mathtt{T}_\nu$ and subalgebras $\mathfrak{h}_\nu$ of $\mathfrak{g}_\nu$ $(\nu=1,2)$ such that
\begin{center}
\begin{tabular}{cp{.85\textwidth}}
{\rm(1)}&triplets $(\mathfrak{h}_\nu, d\rho_\nu|_{\mathfrak{h}_\nu},V_\nu)$ are prehomogeneous vector spaces,\\
{\rm(2)}&variables $x_{0^{(\nu)}}$ corrresponding to the vertices $0^{(\nu)}$ are relatively invariant under the actions of $\mathfrak{h}_\nu$,\\
{\rm(3)}&for each $\nu=1,2$,
there exists at least one triangle $\{0^{(\nu)},a^{(\nu)},\bar{a}^{(\nu)}\}\in\mathcal{T}(0^{(\nu)})$
such that $a^{(\nu)}$ is an isolated vertex and the variable $x_{\bar{a}^{(\nu)}}$ is relatively invariant under the action of $\mathfrak{h}_\nu$.
\end{tabular}
\end{center}
Then, 
the triangle arrangement $\mathtt{T}$ obtained by attaching two triangle arrangements $\mathtt{T}_\nu$ at vertices $0^{(\nu)}$ is prehomogeneous.
\end{Theorem}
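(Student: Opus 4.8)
The plan is to verify the infinitesimal criterion of Lemma~\ref{lemma}: for a polynomial $p$ of $\mathtt{T}$, show that the maps $A(x)\colon\mathfrak{g}[p]\to V$, $M\mapsto d\rho(M)x$, are surjective for generic $x\in V$. Attaching $\mathtt{T}_1$ and $\mathtt{T}_2$ at $0^{(1)}$, $0^{(2)}$ means identifying those vertices to one vertex, which I call $0$, and taking the disjoint union of the remaining vertex sets $S_1$, $S_2$; thus $V=\C^n$ with $n=\dim V_1+\dim V_2-1$, one regards $V_\nu\subset V$ as the span of $e_0$ and $\{e_i:i\in S_\nu\}$ (so $V_1\cap V_2=\C e_0$ and $V_1+V_2=V$), and $p=p_1+p_2$ with $x_0:=x_{0^{(1)}}=x_{0^{(2)}}$ the glued variable. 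For $x\in V$ write $x^{(\nu)}$ for its restriction to the $\mathtt{T}_\nu$-coordinates; for generic $x$ both $x^{(\nu)}$ are generic in $V_\nu$ and $p(x)=p_1(x^{(1)})+p_2(x^{(2)})\neq0$. One first records that $\mathtt{T}$ again has no edge sharing and that $a^{(\nu)}$ stays isolated in $\mathtt{T}$, both immediate.

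The first ingredient is a gluing construction. By (2) there are linear forms $\lambda_\nu$ on $\mathfrak{h}_\nu$ with $\bigl(d\rho_\nu(M_\nu)y\bigr)_{0^{(\nu)}}=\lambda_\nu(M_\nu)y_{0^{(\nu)}}$ for all $y\in V_\nu$; equivalently the $0^{(\nu)}$-th row of $M_\nu$ is $\lambda_\nu(M_\nu)$ on the diagonal and $0$ elsewhere. Let $t(M_\nu)\in\C$ be the $\mathfrak{gl}(1)$-component of $M_\nu$ in $\mathfrak{g}[p_\nu]=\mathfrak{gl}(1)\dot+\mathfrak{g}_0[p_\nu]$. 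I would then show that whenever $M_\nu\in\mathfrak{h}_\nu$ satisfy $\lambda_1(M_1)=\lambda_2(M_2)$ and $t(M_1)=t(M_2)=0$, the matrix $\Psi(M_1,M_2)$ that puts $M_\nu$ into the $V_\nu$-block (well defined, since the two diagonal $0$-rows need only match in the $(0,0)$-entry) lies in $\mathfrak{g}_0[p]$: because those $0$-rows are diagonal one has $\bigl(d\rho_1(M_1)x^{(1)}\bigr)_{0^{(1)}}=\bigl(d\rho_2(M_2)x^{(2)}\bigr)_{0^{(2)}}$, so the expansion \eqref{eq:expand} of $\innV{d\rho(\Psi(M_1,M_2))x}{\nabla_xp}$ regroups cleanly as $\innV{d\rho_1(M_1)x^{(1)}}{\nabla p_1}+\innV{d\rho_2(M_2)x^{(2)}}{\nabla p_2}=3t(M_1)p_1(x)+3t(M_2)p_2(x)=0$. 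Then $A(x)\Psi(M_1,M_2)$ is the vector of $V$ glued from $v_\nu:=d\rho_\nu(M_\nu)x^{(\nu)}$ along $x_0$.

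Next I would identify the image of this glued part. Put $H_\nu:=\{v\in V_\nu:\innV{v}{\nabla p_\nu(x^{(\nu)})}=0\}$, a hyperplane for generic $x$. Since $(\mathfrak{h}_\nu,d\rho_\nu|_{\mathfrak{h}_\nu},V_\nu)$ is prehomogeneous, $A_\nu(x^{(\nu)})$ maps $\mathfrak{h}_\nu$ onto $V_\nu$; as $\mathfrak{h}_\nu\cap\mathfrak{g}_0[p_\nu]$ (the locus $t(M_\nu)=0$) has codimension $1$ in $\mathfrak{h}_\nu$ — a prehomogeneous algebra cannot preserve $p_\nu$ — and maps into $H_\nu$, it maps onto $H_\nu$. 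By (2) the value $\lambda_\nu(M_\nu)$ equals $(v_\nu)_0/x_0$, so the constraint $\lambda_1(M_1)=\lambda_2(M_2)$ is exactly $(v_1)_0=(v_2)_0$; hence the image of $A(x)\circ\Psi$ is precisely the set of vectors glued from pairs $(v_1,v_2)\in H_1\times H_2$ with $(v_1)_0=(v_2)_0$, of dimension $n-2$. As $\innV{x^{(\nu)}}{\nabla p_\nu(x^{(\nu)})}=3p_\nu(x^{(\nu)})\neq0$, the vector $x=A(x)(I_n)$ lies outside this image, so $A(x)$ has rank $\ge n-1$; the last direction comes from (3). For the triangle $\{0,a^{(\nu)},\bar a^{(\nu)}\}$ with $a^{(\nu)}$ isolated one has $\partial_{a^{(\nu)}}p=x_0x_{\bar a^{(\nu)}}$, so $M^\times:=E_{a^{(1)},\bar a^{(2)}}-E_{a^{(2)},\bar a^{(1)}}$ satisfies $\innV{d\rho(M^\times)x}{\nabla_xp}=x_{\bar a^{(2)}}x_0x_{\bar a^{(1)}}-x_{\bar a^{(1)}}x_0x_{\bar a^{(2)}}=0$, whence $w:=A(x)(M^\times)=x_{\bar a^{(2)}}e_{a^{(1)}}-x_{\bar a^{(1)}}e_{a^{(2)}}$ also lies in the image.

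It then remains to prove transversality: $w$ is not in (image of $A(x)\circ\Psi$) $+\,\C x$ for generic $x$ — and this is the step I expect to carry the real content. Writing $w=u+cx$ with $u$ glued from $(v_1,v_2)$, $v_\nu\in H_\nu$, $(v_1)_0=(v_2)_0$, and comparing coordinates, the equation $v_\nu\in H_\nu$ together with Euler's identity $\sum_i x_i\partial_i p_\nu=3p_\nu$ and $\partial_{a^{(\nu)}}p=x_0x_{\bar a^{(\nu)}}$ forces $c=\dfrac{x_0x_{\bar a^{(1)}}x_{\bar a^{(2)}}}{3p_1(x^{(1)})}$ on the $\mathtt{T}_1$-side and $c=-\dfrac{x_0x_{\bar a^{(1)}}x_{\bar a^{(2)}}}{3p_2(x^{(2)})}$ on the $\mathtt{T}_2$-side, which are compatible only if $p_1(x^{(1)})+p_2(x^{(2)})=p(x)=0$ — impossible for generic $x$. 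Therefore the image of $A(x)$ contains a subspace of dimension $(n-2)+1+1=n$, so $A(x)$ is surjective on a dense open set and $\mathtt{T}$ is prehomogeneous by Lemma~\ref{lemma}. The crux of the argument is the observation that the naive block-gluing of $\mathfrak{h}_1$ and $\mathfrak{h}_2$ is obliged to discard both grading directions, pinning the glued image to codimension exactly $2$, together with the final transversality check that the two recovered generators — the scaling $x$ and the cross term $w$ supported on the isolated vertices provided by (3) — are independent of that image; the relative-invariance hypotheses (2) and (3) are exactly what make this bookkeeping go through.
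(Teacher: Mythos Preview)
Your argument is correct, and it follows a genuinely different route from the paper's. The paper proceeds in two stages: first it uses the structural analysis of Section~\ref{sect:structure} to describe a concrete Lie subalgebra $\mathfrak{h}\subset\mathfrak{g}$ of block form \eqref{eq:el h} (with the cross blocks $Z_1,Z_2$ governed by \eqref{eq:rel}), and then it argues \emph{transitivity at the group level}: it lists candidate basic relative invariants $p_0,\dots,p_k$ of $\mathfrak{h}$, matches $p_0$ and $p_1$ using the scalar and the diagonal piece $A$, and finally uses $\exp Z$ to perturb those $q_j^{(\nu)}$ which depend on isolated vertices so that the prehomogeneity of the individual $\mathfrak{h}_\nu$ can be invoked. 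You instead stay entirely at the infinitesimal level of Lemma~\ref{lemma}: you glue $\mathfrak{h}_1\cap\mathfrak{g}_0[p_1]$ and $\mathfrak{h}_2\cap\mathfrak{g}_0[p_2]$ along the $0$-row to obtain a linear family whose $A(x)$-image you identify with the codimension-two subspace $\{v:\innV{v^{(1)}}{\nabla p_1}=\innV{v^{(2)}}{\nabla p_2}=0\}$, and then recover the two missing directions with $I_n$ and the single cross element $M^\times=E_{a^{(1)},\bar a^{(2)}}-E_{a^{(2)},\bar a^{(1)}}$, the final transversality boiling down to the clean incompatibility $p_1(x^{(1)})+p_2(x^{(2)})\neq 0$. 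Your proof is shorter, avoids the relative-invariant bookkeeping, and never needs the image of $\Psi$ to be a Lie subalgebra. It is also marginally sharper: your computation uses only that $a^{(\nu)}$ is isolated, not the clause in~(3) that $x_{\bar a^{(\nu)}}$ be relatively invariant under $\mathfrak{h}_\nu$ --- so despite your closing sentence, your argument actually establishes the conclusion under a weakened hypothesis~(3). What the paper's approach buys in return is explicit structural information about $\mathfrak{g}$ and its relative invariants, which your pure rank count does not yield.
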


\begin{Example}
\label{exam:attach}
Let $\mathtt{T}_1=\mathtt{T}_A$ and $\mathtt{T}_2=\mathtt{T}_D$ as in Examples \ref{exam:TrigArr} and \ref{exam:lie}.
Let $\mathtt{T}$ be the triangle arrangement obtained by attaching $\mathtt{T}_\nu$ $(\nu=1,2)$
at vertices $0^{(1)}=5$ in $\mathtt{T}_1$ and $0^{(2)}=2$ in $\mathtt{T}_2$.
Then, $\mathtt{T}$ is drawn as follows.
\begin{center}
\begin{tabular}{c@{\hspace{1.5em}}c@{\hspace{1.5em}}c@{\hspace{1.5em}}c@{\hspace{1.5em}}c}
$\mathtt{T}_1$&$+$&$\mathtt{T}_2$&$\to$&$\mathtt{T}$\\
\includegraphics[scale=0.5]{pic_preprint/TrigArr1.png}
&\raisebox{1.5em}{$+$}&
\includegraphics[scale=0.5]{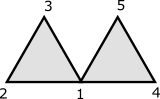}
&
\raisebox{1.5em}{$\to$}&
\raisebox{-2em}{\includegraphics[scale=0.5]{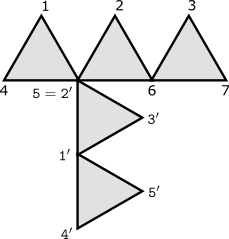}}
\end{tabular}
\end{center}
In the figure of $\mathtt{T}$,
we attach the symbol prime ${}^\prime$ to vertices coming from $\mathtt{T}_2$ 
in order to distinguish those from $\mathtt{T}_1$.
Although the polynomial $x_2$ is not a relative invariant with respect to
the Lie algebra $\mathfrak{g}_2=\mathfrak{g}_D$,
a subalgebra $\mathfrak{h}_2=\set{M\in\mathfrak{g}_2}{M_{24}=M_{25}=0}$ acts on $V_2=\C^5$ prehomogeneously
and 
$x_2$ is a relatively invariant polynomial with respect to $\mathfrak{h}_2$
so that we can apply Theorem~\ref{theo}.
Thus, we see that $\mathtt{T}$ is also prehomogeneous.
\end{Example}

\begin{proof}[Proof of Theorem $\ref{theo}$]
Set $\dim V_\nu=n_\nu+1$.
Then, vertices of $\mathtt{T}_\nu$ are $0^{(\nu)}$, $1^{(\nu)}$, \dots, $n_{\nu}^{(\nu)}$.
For a basis of $V_\nu$, we choose the standard basis $\bs{e}_{0^{(\nu)}},\bs{e}_{1^{(\nu)}},\dots,\bs{e}_{n_\nu^{(\nu)}}$.
Since a polynomial $x_{0^{(\nu)}}$ is relatively invariant for each $\nu=1,2$,
a general element $M^{(\nu)}$ of $\mathfrak{h}_\nu$ is described as
\begin{equation}
\label{eq:elh}
M^{(\nu)}=
\pmat{m^{(\nu)}&0\\h^{(\nu)}&\widetilde{M}^{(\nu)}}
\quad
\bigl(m^{(\nu)}\in\C,\ h^{(\nu)}\in\C^{n_\nu},\ 
\widetilde{M}^{(\nu)}\in\mathrm{Mat}(n_\nu,\C)\bigr).
\end{equation}
Note that $m^{(\nu)}$, $h^{(\nu)}$ and $\widetilde{M}^{(\nu)}$ may be depend on each other.

Let $\mathtt{T}$ be the triangle arrangement which is obtained by attaching $\mathtt{T}_\nu$ $(\nu=1,2)$ at the points $0^{(\nu)}$.
The corresponding vector space $V$ has $\dim V=n_1+n_2+1$.
Vertices of $\mathtt{T}$ are labelled as 
\[
0=0^{(1)}=0^{(2)},\quad
i=i^{(1)}\quad(i=1,\dots,n_1),\quad
n_1+j=j^{(2)}\quad(j=1,\dots,n_2).
\]
Put $\mathcal{V}:=\{0,1,\dots,n_1+n_2\}$.
We denote by $\mathfrak{g}$ the Lie algebra corresponding to $\mathtt{T}$.

The proof is separated into two part, one is determining a  structure of $\mathfrak{g}$,
and the other is proving prehomogeneity.

\noindent(i)
We first investigate a structure of $\mathfrak{g}$.
Since $\mathtt{T}$ obviously has no edge sharing,
for an element $M=(M_{ij})\in\mathfrak{g}$,
a condition $M_{ij}$ occur only if $d_{\rm graph}(i,j)$ for $i,j\in\mathcal{V}$.
If $i,j$ are included in one $\mathtt{T}_\nu$,
then it relates to $\mathfrak{g}_\nu$ and thus we do not consider again.
Hence,
it is enough to consider the case that one of $i,j$ is included in $\mathtt{T}_1$ and the other one is in $\mathtt{T}_2$.
Since $\mathtt{T}_1$ and $\mathtt{T}_2$ are joined at one point,
such a pair $i,j$ must satisfy $d_{\rm graph}(0,i)=d_{\rm graph}(0,j)=1$.
In what follows,
we use a symbol $i$ for vertex in $\mathtt{T}_1$ and a symbol $a$ (instead of $j$) for vertex in $\mathtt{T}_2$
in order to distinguish $\mathtt{T}_\nu$ easily by symbols.
Triangles in $\mathtt{T}(0)$ are written like $\{0,a,\bar{a}\}$,
that is,
we use bar symbol $\bar{a}$ for the remaining vertex.

The current situation is included in a situation discussed in the previous section by setting $j=0$.
Since $\mathtt{T}_1$ and $\mathtt{T}_2$ are joined at one point $0$,
rings consisting of triangles, which are excluded in the discussion of the previous section, never appear.
Thus, \eqref{eq:key eq} is the only non-trivial relation which implies
for each pair of $\{0^{(1)},i,\bar{i}\}\in\mathcal{T}_1(0^{(1)})$ and $\{0^{(2)},a,\bar{a}\}\in\mathcal{T}_2(0^{(2)})$,
we have
\begin{equation}
\label{eq:sum}
M_{i\bar{a}}+M_{a\bar{i}}=0,
\end{equation}
and $M_{ia}=0$ or $M_{ai}=0$ otherwise.
By discussion (2) in the previous section,
an element in \eqref{eq:sum} do not vanish if and only if
$i$ is an isolated vertex in $\mathtt{T}_1$ and $a$ is an isolated vertex in $\mathtt{T}_2$.
The assumption (3) ensures existence of such vertices $i,a$, that is,
there exists at least one $\{0^{(1)},i,\bar{i}\}\in\mathcal{T}_1(0^{(1)})$ such that
$i$ is an isolated vertex and a polynomial $x_{\bar{i}}$ is relatively invariant with respect to $\mathfrak{h}_1$,
and similarly
there exists at least one $\{0^{(2)},a,\bar{a}\}\in\mathcal{T}_2(0^{(2)})$ such that
$a$ is an isolated vertex and a polynomial $x_{\bar{a}}$ is relatively invariant with respect to $\mathfrak{h}_2$.

Therefore, we have confirmed that
$\mathfrak{g}$ includes a subalgebra $\mathfrak{h}$ consisting of matrices of the form
\begin{equation}
\label{eq:el h}
M=\pmat{
m&0&0\\
h^{(1)}&M^{(1)}&Z_1\\
h^{(2)}&Z_2&M^{(2)}
},
\end{equation}
where $m=m^{(1)}=m^{(2)}\in\C$,
$h^{(\nu)}\in\C^{n_\nu}$ and $\widetilde{M}^{(\nu)}\in\mathrm{Mat}(n_\nu,\C)$ are as in \eqref{eq:elh}.
For matrices $Z_1=(M_{i\bar{a}})_{1\le i\le n_1,\ 1\le a\le n_2}$
and
$Z_2=(M_{a\bar{i}})_{1\le a\le n_2,\ 1\le i\le n_1}$,
elements $M_{i\bar{a}}$ and $M_{i\bar{a}}$ are zeros except for the case
\begin{center}
\begin{tabular}{cl}
(a)&$\{0^{(1)},i,\bar{i}\}\in\mathcal{T}_1(0^{(1)})\text{ and }\{0^{(2)},a,\bar{a}\}\in\mathcal{T}_2(0^{(2)})$,\\
(b)&the vertex $i$ is an isolated vertex in $\mathtt{T}_1$,\\
(c)&a polynomial $x_{\bar{a}}$ is relatively invariant with respect to $\mathfrak{h}_2$,
\end{tabular}
\end{center}
and in this case we have
\begin{equation}
\label{eq:rel}
M_{a\bar{i}}=-M_{i\bar{a}}.
\end{equation}

\noindent(ii) 
Next we investigate the prehomogeneity of $\mathtt{T}$.
To do so, we consider basic relative invariants with respect to $\mathfrak{h}$.
We set
\[
\begin{array}{l}
p_0(x)=p_0(x_0,x^{(1)},x^{(2)})=p^{(1)}(x_0,x^{(1)})+p^{(2)}(x_0,x^{(2)}),\\
p_1(x)=p_1(x_0,x^{(1)},x^{(2)})=x_0.
\end{array}
\]
It is obvious that $p_0(x)$ and $p_1(x)$ are relatively invariant polynomials.
For $\nu=1,2$,
let us denote by $p^{(\nu)}(x_{0^{(\nu)}},x^{(\nu)})$ and $q_j^{(\nu)}(x_{0^{(\nu)}},\,x^{(\nu)})$ $(j=0,1,\dots,k_\nu)$ the basic relative invariants with respect to $\mathfrak{h}_\nu$.
Here, $p^{(\nu)}(x_{0^{(\nu)}},x^{(\nu)})$ is the polynomial corresponding to triangle arrangement $\mathtt{T}_{\nu}$
and $q_j^{(\nu)}(x_{0^{(\nu)}},\,x^{(\nu)})$ are the other ones.
We set $q_0^{(\nu)}(x_{0^{(\nu)}},\,x^{(\nu)})=x_{0^{(\nu)}}$.

Among polynomials $q_j^{(\nu)}(x_{0^{(\nu)}},\,x^{(\nu)})$ $(j=1,\dots,k_\nu)$,
we pick ones such that $\partial_jq_i^{(\nu)}(x_{0^{(\nu)}},\,x^{(\nu)})=0$ for any isolated vertex $j$ in triangles in $\mathcal{T}_\nu(0^{(\nu)})$,
and rename them as $p_2(x),\dots,p_k(x)$.

We shall show that
the basic relative invariants with respect to $\mathfrak{h}$ are exactly $p_j(x)$ $(j=0,1,\dots,k)$,
and the prehomogeneity is proved in the same time.

Let $H=\exp \mathfrak{h}$ be a connected and simply connected Lie group of $\mathfrak{h}$.
Let us take a reference point $x_*\in V$ such that $p_j(x_*)\ne 0$ for all $j=0,1,\dots,k$.
What we want to prove is to show that any regular element $x\in V$ , that is, $p_j(x)\ne 0$ for any $j=0,1,\dots,k$
can be moved to $x_*$ by the action of $H$.
To do so,
we decompose $\mathfrak{h}$ into $4$ spaces as a vector space.
At first, we put
\[
\mathfrak{h}'=\set{M\in\mathfrak{h}}{\innV{d\rho(M)x}{\nabla_xp_0(x)}=0\text{ for all }x\in V}
\]
and 
\[
\mathfrak{h}''=\set{M'\in \mathfrak{h}'}{d\rho(M')x_0=0}\subset \mathfrak{h}'.
\]
Then, $\mathfrak{h}''$ is a subalgebra of $\mathfrak{h}'$ so that
there exists an $A\in\mathfrak{h}'$ such that
\[
\mathfrak{h}'=\C A\,\dot+\,\mathfrak{h}''.
\]
Note that we can take $A$ as a diagonal matrix.
Then, an element $M$ as in \eqref{eq:el h} can be decomposed into
\begin{equation}
\label{eq:decomp}
M=t\,I+m' A
+
\pmat{0&0&0\\0&M_1'&0\\0&0&M_2'}
+
\pmat{0&0&0\\0&0&Z_1\\0&Z_2&0}.
\end{equation}
Here, $I$ is the identity matrix of suitable size.
Set 
\[
M'=\pmat{0&0&0\\0&M_1'&0\\0&0&M_2'},\quad
Z:=\pmat{0&0&0\\0&0&Z_1\\0&Z_2&0}.
\]
Then, we have $M',Z\in\mathfrak{h}''$.
By definition,
the polynomial $p_0(x)$ is invariant under the actions of $A$, $M'$ and $Z$,
and the polynomial $p_1(x)$  is invariant under the actions of $M'$ and $Z$.
This means that
we can match a value of $p_0(x)$ to $p_0(x_*)$ by the action $\exp(t\,I)$,
and then a value of $p_1(x)$ to $p_1(x_*)$ by the action of $\exp(m'A)$.

If we have $q_j^{(\nu)}(x_{0^{(\nu)}},\,x^{(\nu)})\ne 0$ for any $j=0,1,\dots,k_\nu$ and for $\nu=1,2$,
then the prehomogeneity of $\mathfrak{h}_\nu$ imply that of $\mathfrak{h}$, and hence of $\mathfrak{g}$.
The problem here is that there are some $q_j^{(\nu)}(x_{0^{(\nu)}},\,x^{(\nu)})$ 
which are not included in $p_j(x)$ $(j=0,1,\dots,k)$.
Such $q_j^{(\nu)}(x_{0^{(\nu)}},\,x^{(\nu)})$ satisfy
$\partial_i q_j^{(\nu)}(x_{0^{(\nu)}},\,x^{(\nu)})\ne 0$ for some isolated vertex $i$ in a triangle in $\mathcal{T}_\nu(0^{(\nu)})$.
By \eqref{eq:rel},
we can take $Z=E_{i\bar{a}}-E_{a\bar{i}}$ where $E_{st}$ is the matrix unit of size $\dim V$ having one at the position $(s,t)$ and zeros elsewhere.
Since $\exp Z=I+Z$, 
polynomials $q_j^{(\nu)}(x_{0^{(\nu)}},\,x^{(\nu)})$ which are not included in $p_j(x)$ can be take non-zero values
by applying actions of $\exp Z$.
Therefore,
we can conclude that
any regular element $x\in V$ can be moved to the reference point $x_*\in V$ by the action of $H$.
Namely,
we have proved that $H$ acts transitively on the set of regular elements and hence 
$(H,\rho,V)$ is a prehomogeneous vector space and so is $(G,\rho,V)$.
\end{proof}

\begin{Remark}
The condition (3) in Theorem~\ref{theo} is necessary.
We shall confirm this by the following example.
\begin{center}
\begin{tabular}{c@{\hspace{1.5em}}c@{\hspace{1.5em}}c@{\hspace{1.5em}}c@{\hspace{1.5em}}c}
$\mathtt{T}_1$&$+$&$\mathtt{T}_2$&$\to$&$\mathtt{T}$\\
\raisebox{-1em}{\includegraphics[scale=0.5]{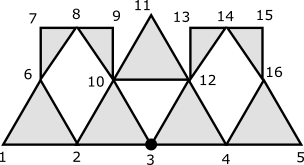}}
&\raisebox{1.5em}{$+$}
&
\includegraphics[scale=0.5]{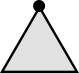}
&\raisebox{1.5em}{$\to$}&
\raisebox{-2.3em}{\includegraphics[scale=0.5]{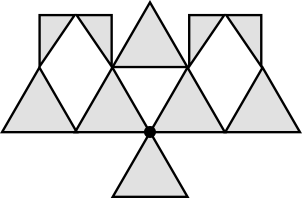}}
\end{tabular}
\end{center}
Both triangle arrangements $\mathtt{T}_\nu$ $(\nu=1,2)$ above are prehomogeneous,
and variables corresponding to the vertices in black circles in each figures are relatively invariant polynomials,
but there are no triangles in $\mathcal{T}_1(3)$ including isolated points.
In this case,
the triangle arrangement $\mathtt{T}$ obtained by attaching $\mathtt{T}_\nu$ $(\nu=1,2)$ at the vertices of black circles is not prehomogeneous.
We note that if we choose a vertex from one of $\{6,8,10,12,14,16\}$ for the attaching point in $\mathtt{T}_1$,
then the condition (3) is satisfied so that
the triangle arrangement obtained by attaching this point is prehomogeneous.
\end{Remark}

\section{Examples}
\label{sect:example}

In this section,
we give some series of prehomogeneous triangle arrangements.
We also exhibit triangle arrangements which does not correspond to prehomogeneous vector spaces.

\begin{Theorem}
\label{theo:pv}
The following triangle arrangements are prehomogeneous.
\begin{center}
\begin{tabular}{c@{\ }p{.9\textwidth}}
$(1)$&daisy cases: triangle arrangements constructed by attaching $n$ triangles at one vertex $(n\ge 2)$,\\
$(2)$&chain cases: triangle arrangements constructed by arraying $n$ triangles in a row $(n\ge 2)$,\\
$(3)$&circular cases: triangle arrangements constructed by arraying $n$ triangles circularly $(n\ge 3)$,\\
$(4)$&edge gluing cases: triangle arrangements constructed by gluing $n$ triangle arrangements $\mathtt{T}_B$ $($as in Example $\ref{exam:TrigArr})$  edges $24$ and $36$ $(n\ge 2)$.
\end{tabular}
\end{center}
\end{Theorem}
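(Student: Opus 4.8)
The plan is to treat the four families separately: cases (1) and (2) are reduced to the attaching construction of Theorem~\ref{theo}, while cases (3) and (4) are settled by a direct computation of $\mathfrak{g}[\mathtt{T}]$ together with Lemma~\ref{lemma}.

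\smallskip\noindent\textbf{Cases (1) and (2).} Both are proved by induction on the number $n$ of triangles. In the base case $n=2$, a chain of two triangles and a daisy of two triangles both coincide with the figure of Example~\ref{exam:lie}, whose Lie algebra is computed there; one checks directly that a generic point (say the all-ones vector) spans $V$ under $d\rho(\mathfrak{g}[p])$, so Lemma~\ref{lemma} gives prehomogeneity. For the inductive step, realise $\mathtt{T}$ (the daisy, resp.\ chain, with $n$ triangles) as the arrangement obtained by attaching a single triangle $\mathtt{T}_2$ to the daisy, resp.\ chain, $\mathtt{T}_1$ with $n-1$ triangles, at the centre vertex, resp.\ at an end hub vertex $0^{(1)}$. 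A lone triangle has diagonal Lie algebra, so with $\mathfrak{h}_2=\mathfrak{g}_2$ conditions (1)--(3) of Theorem~\ref{theo} hold trivially for $\mathtt{T}_2$ (all three vertices are isolated and every coordinate is relatively invariant). The substance is to exhibit $\mathfrak{h}_1\subset\mathfrak{g}_1$ as required: using the monomial bookkeeping of Section~\ref{sect:structure} one shows that the attaching coordinate $x_{0^{(1)}}$ is already relatively invariant under all of $\mathfrak{g}_1$, and one then takes $\mathfrak{h}_1$ to be the subalgebra of $\mathfrak{g}_1$ that in addition stabilises the hyperplane $\{x_{\bar a^{(1)}}=0\}$, where $\{0^{(1)},a^{(1)},\bar a^{(1)}\}$ is a triangle of $\mathtt{T}_1$ at $0^{(1)}$ with $a^{(1)}$ isolated. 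It remains to check that $(\mathfrak{h}_1,d\rho_1|_{\mathfrak{h}_1},V_1)$ is still prehomogeneous; for the daisy this reduces to a statement about a maximal parabolic of $\mathrm{GSO}_{2n-2}$ (indeed the daisy polynomial $x_0\sum_i x_{a_i}x_{b_i}$ is, after an obvious change of coordinates, the basic relative invariant of the classical prehomogeneous vector space $GL(1)\times\mathrm{GSO}_{2n}$ on $\C^{2n+1}$, which gives a quick alternative proof of (1) altogether), and for the chain it is read off from the inductively known structure of $\mathfrak{g}_1$. Theorem~\ref{theo} then yields prehomogeneity of $\mathtt{T}$.

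\smallskip\noindent\textbf{Cases (3) and (4).} Here Theorem~\ref{theo} does not apply: enlarging a ring of triangles is not an attachment at a single vertex, and the edge-glued figures of (4) violate the no-edge-sharing hypothesis under which all of Section~\ref{sect:structure} was carried out. We therefore compute $\mathfrak{g}[\mathtt{T}]$ directly from $\innV{d\rho(M)x}{\nabla_xp(x)}=0$, monomial by monomial. For the circular family, with $p(x)=\sum_{i=1}^n x_{v_i}x_{w_i}x_{v_{i+1}}$ (indices modulo $n$), the coefficient analysis of Section~\ref{sect:structure} applies, but one must now push through the ring configurations — and the figures displayed at the end of Section~\ref{sect:structure} — that were deferred there, since these do occur among the triangles of a ring (in particular $\mathtt{T}_B$ is the case $n=3$). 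Having $\mathfrak{g}[\mathtt{T}]$ explicitly, one picks a reference point $x_*$ with all coordinates $x_{v_i},x_{w_i}$ nonzero and checks, via Lemma~\ref{lemma}, that $A(x_*)\colon\mathfrak{g}[\mathtt{T}]\to V$ is surjective; this is conveniently arranged as an induction on $n$, peeling one triangle off the ring at a time. Case (4) is handled in the same spirit: first redo the coefficient analysis with shared edges present — each shared edge $jk$ forces new relations among the entries in rows $j$ and $k$ of $M$ — thereby determining $\mathfrak{g}[\mathtt{T}]$ for the chain of $n$ edge-glued copies of $\mathtt{T}_B$; then verify full generic rank of $A(x_*)$ by induction on $n$ with $\mathtt{T}_B$ (the $n=1$ case, settled above) as base.

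\smallskip\noindent\textbf{Expected obstacle.} The genuinely new work is case (4): since every statement in Section~\ref{sect:structure} presupposes the absence of edge sharing, even the shape of $\mathfrak{g}[\mathtt{T}]$ must be recomputed from scratch for the edge-glued figures, and one has to confirm that the additional relations forced by shared edges do not cut $\mathfrak{g}[\mathtt{T}]$ below the dimension needed for a dense orbit. The $n=3$ subcase of (3) — the ring $\mathtt{T}_B$ — is a smaller instance of essentially the same difficulty, namely finishing the ring case left open as (2-i) in Section~\ref{sect:structure}. By contrast, once the correct subalgebras $\mathfrak{h}_1$ are identified, cases (1) and (2) are routine applications of Theorem~\ref{theo}.
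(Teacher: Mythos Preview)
Your overall plan diverges from the paper more than you might expect: the paper proves \emph{all four} cases by a uniform direct method --- compute $\mathfrak{g}[p]$ explicitly, write down $A(x)$, and exhibit a square minor whose determinant factors as a monomial times $p(x)$ --- and never invokes Theorem~\ref{theo}. Your parenthetical remark for case~(1), identifying the daisy polynomial with the basic invariant of $GL(1)\times \mathrm{GSO}_{2n}$, is exactly the paper's argument for that case; for (4) you are also in agreement in spirit. But for (2) and (3) there are concrete gaps.

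For case~(2) your inductive use of Theorem~\ref{theo} rests on the claim that the attaching coordinate $x_{0^{(1)}}$ is already relatively invariant under all of $\mathfrak{g}_1$. This is false. Extending a chain forces $0^{(1)}$ to be an end hub vertex, and the paper's explicit description of $\mathfrak{g}[p]$ for a chain of $m$ triangles shows that row $2m+1$ of $M$ carries the off-diagonal entry $M_{2m{+}1,\,2m{-}1}=-c$ coming from the $c\,E^{(m+1)}_{m+1,m-1}$ term in $M_{22}$ (and dually $M_{m+1,m+3}=-b$ at the other end). Thus neither $x_{m+1}$ nor $x_{2m+1}$ is relatively invariant under $\mathfrak{g}_1$, so hypothesis~(2) of Theorem~\ref{theo} fails for the full algebra. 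The approach is salvageable --- take $\mathfrak{h}_1$ to be the subalgebra with $c=0$, which the paper in effect shows is still prehomogeneous by computing $\det A(x)$ for the subalgebra $b=c=0$ --- but then you must know $\mathfrak{g}_1$ in the same detail as the paper's direct proof, so the induction buys nothing. The same objection hits your proposed $\mathfrak{h}_1$ in case~(1): no petal coordinate $x_{\bar a^{(1)}}$ is relatively invariant under $\mathfrak{g}_1$, since the $\mathfrak{so}(J)$ block mixes all petals; you would have to verify prehomogeneity of the parabolic you pass to, which you do not.

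For case~(3), ``peeling one triangle off the ring at a time'' does not produce a smaller ring but a chain, so the proposed induction has no inductive hypothesis to appeal to; moreover you overlook that the ring cases $n=3,4$ have genuinely larger Lie algebras (extra $Y_{ij}$ parameters in the paper's $n=4$ computation) than the generic $n\ge 5$ formula, so a uniform argument of the kind you sketch would in any event need a separate treatment of small $n$. The paper instead writes $A(x)$ as a $2n\times(2n{+}1)$ matrix and computes a specific $2n\times 2n$ minor explicitly.
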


Examples of triangle arrangements in Theorem~\ref{theo:pv} are given in Figure~\ref{fig:general}.
In particular,
Theorem~\ref{theo:pv} (4) tells us that
the condition of edge-sharing is not a necessary condition for prehomogeneity.

\begin{figure}[ht]
\centering
\begin{tabular}{c@{\hspace{3em}}c@{\hspace{3em}}c}
(1) Daisy case&
(2) Chain case&
(3) Circular case\\
($n=6$)&($n=4$)&($n=6$)\\
\includegraphics[scale=0.4]{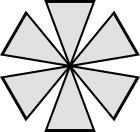}&
\raisebox{1em}{\includegraphics[scale=0.4]{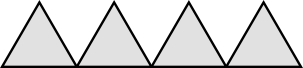}}&
\raisebox{-.5em}{\includegraphics[scale=0.4]{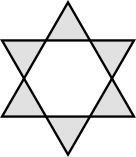}}
\end{tabular}

\vspace{1em}

\begin{tabular}{c@{\hspace{3em}}c}
(4) Edge gluing case&
(4) Edge gluing case\\
($n=2$)&($n=3$)\\
\includegraphics[scale=0.4]{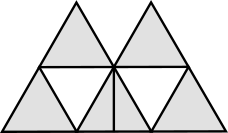}
&
\includegraphics[scale=0.4]{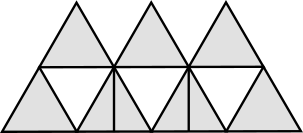}
\end{tabular}
\caption{Examples of triangle arrangements of Theorem~\ref{theo:pv}}
\label{fig:general}
\end{figure}

We shall prove this theorem in the following subsections by giving detailed structures of Lie algebras.
We note here that, in this section, the dual vector space $V^*$ be identified with $V$ through $\innV{\cdot}{\cdot}$.
Before going to proofs,
we give an example of triangle arrangements which are not prehomogeneous.

\begin{Proposition}
\label{prop:unconnected}
Triangle arrangements, which are not connected as graphs, are not prehomogeneous.
\end{Proposition}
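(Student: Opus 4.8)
The plan is to invoke Lemma~\ref{lemma}: I will show that for a disconnected $\mathtt{T}$ the maps $A(x)\colon\mathfrak{g}[p]\to V$ are \emph{never} surjective, by exhibiting, for generic $x$, an explicit nonzero element of $V$ that annihilates $\mathfrak{g}[p]\cdot x=d\rho(\mathfrak{g}[p])x$. Write $\mathtt{T}=\mathtt{T}_1\sqcup\mathtt{T}_2$ with the two parts sharing no vertex (grouping components into two if there are more than two). Since, by the paragraph preceding this statement, a connected component that is a single black circle does not obstruct prehomogeneity, I may — and must — assume that each $\mathtt{T}_\nu$ contains at least one triangle; this is exactly the case to be treated. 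Correspondingly $V=V_1\dot+V_2$, where $V_\nu$ is spanned by the variables of the vertices of $\mathtt{T}_\nu$, and the polynomial splits as $p(x)=p_1(x^{(1)})+p_2(x^{(2)})$ with $p_\nu$ a non-constant cubic in the variables $x^{(\nu)}$ of $\mathtt{T}_\nu$ alone; in particular $\nabla_xp_1(x)\in V_1$ and $\nabla_xp_2(x)\in V_2$ have disjoint supports.

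The first step is to check that $p_1$ and $p_2$ are infinitesimally relatively invariant under $\mathfrak{g}_0[p]$. Given $M_0\in\mathfrak{g}_0[p]$, I write it in block form $M_0=\smat{A&B\\ C&D}$ with respect to $V_1\dot+V_2$ and set $x=(u,v)$ with $u\in V_1$, $v\in V_2$. The defining identity $\innV{d\rho(M_0)x}{\nabla_xp(x)}=0$ then reads
\[
\innV{Au+Bv}{\nabla_up_1(u)}+\innV{Cu+Dv}{\nabla_vp_2(v)}=0
\]
identically in $(u,v)$. The four summands are homogeneous of the pairwise distinct bidegrees $(3,0)$, $(2,1)$, $(1,2)$, $(0,3)$ in $(u,v)$, so each vanishes separately; in particular $\innV{M_0x}{\nabla_xp_1(x)}=\innV{Au}{\nabla_up_1(u)}+\innV{Bv}{\nabla_up_1(u)}=0$, and likewise $\innV{M_0x}{\nabla_xp_2(x)}=0$.

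Now I conclude. For $M=tI+M_0\in\mathfrak{g}[p]=\mathfrak{gl}(1)\dot+\mathfrak{g}_0[p]$, Euler's relation together with the previous step gives $\innV{d\rho(M)x}{\nabla_xp_\nu(x)}=3t\,p_\nu(x)$ for $\nu=1,2$ and every $x$. Hence, setting $\ell_x:=p_2(x)\nabla_xp_1(x)-p_1(x)\nabla_xp_2(x)\in V$ (identifying $V$ with $V^*$ via $\innV{\cdot}{\cdot}$), one gets $\innV{d\rho(M)x}{\ell_x}=3t\,p_1(x)p_2(x)-3t\,p_1(x)p_2(x)=0$ for every $M\in\mathfrak{g}[p]$, so $\ell_x$ annihilates $\mathfrak{g}[p]\cdot x$. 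Since $\nabla_xp_1(x)\in V_1$ and $\nabla_xp_2(x)\in V_2$ lie in complementary subspaces, $\ell_x=0$ forces $p_2(x)\nabla_xp_1(x)=0$; but $p_2$ and $\nabla_xp_1$ are not identically zero (here is where non-constancy of $p_1$ is used), so $\ell_x\ne0$ on a dense open subset of $V$. Therefore the generic rank of $A(x)$ is at most $\dim V-1$, and Lemma~\ref{lemma} shows that $\mathtt{T}$ is not prehomogeneous.

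The argument is otherwise elementary linear algebra; I expect no real obstacle beyond two points of care: getting the convention for ``not connected'' right (a lone black-circle component is the genuine exception, which is why the claim must be read with every component containing a triangle, exactly what makes $p_1,p_2$ non-constant), and the bidegree bookkeeping that forces each of $p_1,p_2$ to be a relative invariant.
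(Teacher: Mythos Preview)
Your proof is correct and takes a genuinely different route from the paper's. The paper first asserts (essentially via the same bidegree observation) that $\mathfrak{g}[P]$ is block diagonal, then reduces to the case where both pieces are themselves prehomogeneous, and finally invokes the theory of homaloidal polynomials: it shows in a separate Claim that for degree $d\ge 3$ the sum $P(x,y)=p(x)+q(y)$ of two homaloidal polynomials cannot be homaloidal, by writing down the would-be dual $P_*=(p_*^{1/(d-1)}+q_*^{1/(d-1)})^{d-1}$ and observing it is not rational. Your argument is more elementary and self-contained: you extract from the bidegree splitting only that $p_1$ and $p_2$ are separately relatively invariant for $\mathfrak{g}_0[p]$, and then exhibit the explicit linear functional $\ell_x=p_2(x)\nabla_x p_1(x)-p_1(x)\nabla_x p_2(x)$ in the cokernel of $A(x)$, invoking Lemma~\ref{lemma} directly. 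This avoids the detour through polar maps and duals, handles in one stroke the case where a piece is not prehomogeneous, and also sidesteps the paper's (slightly sloppy) claim that $\mathfrak{g}[P]$ is exactly block diagonal. The paper's approach, on the other hand, yields the stronger standalone fact that a sum of homaloidal cubics in disjoint variables is never homaloidal.
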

\begin{proof}
If a triangle arrangement $\mathtt{T}$ is not connected,
then there exist $\mathtt{T}_1$ and $\mathtt{T}_2$ such that
$\mathtt{T}=\mathtt{T}_1\cup\mathtt{T}_2$
and
$\mathtt{T}_1\cap\mathtt{T}_2=\emptyset$.
Let $p(x)$ and $q(y)$ be the corresponding polynomials associated with 
$\mathtt{T}_1$ and $\mathtt{T}_2$, respectively.
Then, it is obvious that
the polynomial corresponding to $\mathtt{T}$ is $P(x,y)=p(x)+q(y)$.
Then, since there is no overlapping at variables of $p$ and $q$,
the corresponding Lie algebra $\mathfrak{g}[P]$ can be described as
\[
\mathfrak{g}[P]=\set{tI+\pmat{M_1&0\\0&M_2}}{t\in\C,\,M_1\in\mathfrak{g}_0[p],\,M_2\in\mathfrak{g}_0[q]},
\]
where $\mathfrak{g}[p]$, $\mathfrak{g}[q]$ are the Lie algebras corresponding to $p$ and $q$, respectively.
Thus, it is easily verified that
if one of $\mathfrak{g}[p]$ or $\mathfrak{g}[q]$ is not prehomogeneous,
then so is not $\mathfrak{g}[P]$.
Therefore,
we can assume that both of them are prehomogeneous,
and
we shall prove the assertion for this case for a more general situation.
To do so,
we recall the definition of \textit{homaloidal} polynomials.
A homogeneous polynomial $p(x)$ of degree $d$ is said to be homaloidal
if the polar map $\varphi_p(x):=\mathrm{grad}\,\log p(x)$ is birational.
Then, there exists a homogeneous polynomial $p_*(x)$,
called the dual polynomial of $p(x)$, such that
$p_*(\nabla_xp(x))=p(x)^{d-1}$.
For a relatively invariant polynomial $p(x)$ of a prehomogeneous vector space,
it is known that the polar map $\varphi_p(x)$ is homaloidal or a zero map.
Thus, it is enough to prove the following claim.
\end{proof}

\begin{Claim}
Let $d\ge 3$.
For two homaloidal polynomials $p(x)=p(x_1,\dots,x_n)$ and $q(y)=q(y_1,\dots,y_m)$ of degree $d$,
we set $P(x,y):=p(x)+q(y)$.
Then, $P(x,y)$ cannot be a homaloidal polynomial.
\end{Claim}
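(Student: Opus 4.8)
The plan is to show directly that the polar map of $P(x,y)=p(x)+q(y)$ fails to be birational by exhibiting a positive-dimensional fiber, or equivalently by showing that $\varphi_P$ is not generically injective. Write $N=n+m$ and recall $\varphi_P(x,y)=\bigl(\nabla_x p(x),\,\nabla_y q(y)\bigr)$, since the variables of $p$ and $q$ are disjoint. The key structural observation is that both $p$ and $q$, being homogeneous of degree $d$, are invariant under their respective diagonal scaling actions only up to a scalar; more precisely, for $s\in\C^\times$ we have $\nabla_x p(sx)=s^{d-1}\nabla_x p(x)$ and likewise for $q$. So I would first introduce the two-parameter torus action $(x,y)\mapsto(sx,ty)$ on $\C^N$ and compute how $\varphi_P$ transforms: $\varphi_P(sx,ty)=\bigl(s^{d-1}\nabla_x p(x),\,t^{d-1}\nabla_y q(y)\bigr)$.

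Next I would exploit the fact that a homaloidal polynomial has a \emph{dual} polynomial $p_*$ with $p_*(\nabla_x p(x))=p(x)^{d-1}$, and in particular that on the generic locus where $p(x)\ne0$ the gradient $\nabla_x p(x)$ is nonzero; the same holds for $q$. Then for a generic point $(x,y)$ with $p(x)q(y)\ne0$, the image $\varphi_P(x,y)$ determines $\nabla_x p(x)$ and $\nabla_y q(y)$ separately, hence determines $p(x)^{d-1}=p_*(\nabla_x p(x))$ and $q(y)^{d-1}=q_*(\nabla_y q(y))$, i.e.\ it pins down $p(x)$ and $q(y)$ only up to $(d-1)$-st roots of unity. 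The decisive point: I would fix a generic $(x,y)$ and consider the curve $s\mapsto(sx,\,\sigma(s)y)$ where $\sigma(s)$ is chosen so that $s^{d-1}$ and $\sigma(s)^{d-1}$ scale $\nabla_x p$ and $\nabla_y q$ by reciprocal factors — concretely, take $\sigma(s)$ with $\sigma(s)^{d-1}=s^{-(d-1)}$, e.g.\ $\sigma(s)=s^{-1}$. Along this curve $\varphi_P(sx,s^{-1}y)=\bigl(s^{d-1}\nabla_x p(x),\,s^{-(d-1)}\nabla_y q(y)\bigr)$, which is \emph{not} constant, so that alone does not produce a fiber. The fix is to instead use the birationality of $\varphi_p$ and $\varphi_q$ to realize, for each $s$, a \emph{different} source point mapping to the same target: since $\varphi_p$ is dominant, the rescaled vector $s^{d-1}\nabla_x p(x)$ equals $\nabla_x p(x')$ for $x'=\varphi_p^{-1}(s^{d-1}\nabla_x p(x))$, and by homogeneity of $\varphi_p$ of degree $d-1$ one checks $x'=s\,x$ up to the (finite) ambiguity — so this reproduces the same curve. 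The genuinely new idea needed is therefore to combine the \emph{two} scalings into one that leaves $\varphi_P$ invariant: take $(x,y)\mapsto(\zeta x,\zeta^{-1}y)$... which again only gives finite ambiguity when $d-1$ divides appropriately.

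So the clean argument I would actually write is a dimension count: $\varphi_p$ birational forces $\varphi_p$ to be dominant onto $\C^n$ with generically finite fibers, and similarly for $\varphi_q$; but I claim the generic fiber of $\varphi_P$ over a point $(\xi,\eta)$ in the image is the product of the fiber of $\varphi_p$ over $\xi$ with the fiber of $\varphi_q$ over $\eta$. For $\varphi_P$ to be birational this product must be generically a single point, which is automatic from finiteness — so this naive count is \emph{not} enough, and the real obstruction lies elsewhere: it is that the image of $\varphi_P$ is not dense, or that $\varphi_P$ contracts a divisor. The right track, which I would pursue, is to use the dual polynomial directly: if $P=p+q$ were homaloidal with dual $P_*$, then $P_*(\nabla P(x,y))=P(x,y)^{d-1}$, i.e.\ $P_*\bigl(\nabla_x p(x),\nabla_y q(y)\bigr)=\bigl(p(x)+q(y)\bigr)^{d-1}$ for all $(x,y)$. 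Now specialize: setting $y$ generic and fixed while letting $x$ vary shows $P_*(\xi,\eta)$, restricted to the image, must simultaneously equal $(p(x)+q(y))^{d-1}$ for all $x$ with $\nabla_x p(x)=\xi$ — using the finite fiber structure of $\varphi_p$, $p(x)$ takes only finitely many values on that fiber, yet $(p(x)+q(y))^{d-1}$ must be a well-defined function of $(\xi,\eta)$. The hard part — and the main obstacle — is to promote this ``finitely many values'' constraint into an algebraic contradiction: I expect one must differentiate the identity $P_*(\nabla_x p,\nabla_y q)=(p+q)^{d-1}$ in the $x$-variables, obtaining $\sum_k \partial_k P_*(\nabla_x p,\nabla_y q)\,\partial_i\partial_k p(x)=(d-1)(p+q)^{d-2}\partial_i p(x)$, compare with the analogous identity $\sum_k \partial_k p_*(\nabla_x p)\,\partial_i\partial_k p(x)=(d-1)p(x)^{d-2}\partial_i p(x)$ coming from homaloidality of $p$ alone, and deduce that the gradient $\nabla_\xi P_*(\xi,\eta)$ is forced to be proportional to $\nabla_\xi p_*(\xi)$ with a proportionality factor involving $q(y)$, contradicting that $P_*$ is a fixed polynomial independent of the splitting once $d\ge3$ makes the powers $(p+q)^{d-2}$ genuinely mix the two blocks. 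I would organize the final write-up around this differentiated identity, keeping the degree hypothesis $d\ge3$ in play precisely because for $d=2$ the quadratic case genuinely does allow $P=p+q$ to stay homaloidal, so the argument must break there.
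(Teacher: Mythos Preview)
Your proposal is not a proof but an exploration that tries and abandons several ideas and ends with an unfinished sketch. The torus--action and fiber--product attempts you correctly discard; the final plan (differentiate the identity $P_*(\nabla_x p,\nabla_y q)=(p+q)^{d-1}$ and compare with the analogous identity for $p_*$) is left at the level of ``I expect one must\ldots'' with no actual contradiction derived. Even granting that the Hessian of $p$ is generically invertible, what you would obtain is
\[
\nabla_\xi P_*(\nabla_x p,\nabla_y q)=\frac{(p(x)+q(y))^{d-2}}{p(x)^{d-2}}\,\nabla_\xi p_*(\nabla_x p),
\]
and to turn this into a contradiction you still need to argue that the scalar factor is not a rational function of $(\xi,\eta)=(\nabla_x p,\nabla_y q)$ --- which is exactly the point at issue, just pushed one step further.

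The paper's proof bypasses all of this with a single explicit construction: set
\[
P_*(\xi,\eta):=\bigl(p_*(\xi)^{1/(d-1)}+q_*(\eta)^{1/(d-1)}\bigr)^{d-1},
\]
verify in one line that $P_*(\nabla_x p,\nabla_y q)=(p+q)^{d-1}$, and observe that for $d\ge3$ this $P_*$ is not a rational function (the cross terms in the binomial expansion involve genuine fractional powers of $p_*$ and $q_*$, which are homogeneous of degree $d$, not a $(d-1)$-st power). Since the dual of a homaloidal polynomial is unique (the gradient map is dominant), $P$ cannot be homaloidal. This is the missing idea in your write-up: instead of trying to squeeze a contradiction out of an unknown $P_*$, write down what $P_*$ would have to be and see that it is not algebraic.
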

\begin{proof}
Let $p_*(x)$ and $q_*(y)$ be the dual polynomials of $p(x)$ and $q(y)$, respectively, that is, we have
\[
p_*\bigl(\nabla_xp(x)\bigr)=p(x)^{d-1},\quad
q_*(\nabla_yq(y)\bigr)=q(y)^{d-1}.
\]
Put
\[
P_*(x,y):=\bigl(p_*(x)^{\frac{1}{d-1}}+q_*(y)^{\frac{1}{d-1}}\bigr)^{d-1}.
\]
Then,
we have
\[
\begin{array}{r@{\ }c@{\ }l}
P_*\bigl(\nabla_{x,y}P(x,y)\bigr)
&=&
\ds
P_*\bigl(\nabla_xp(x),\,\nabla_y(y)\bigr)\\
&=&
\ds
\Bigl(\bigl\{p_*(\nabla_xp(x))\bigr\}^{\frac{1}{d-1}}+\bigl\{q_*(\nabla_yq(y))\bigr\}^{\frac{1}{d-1}}\Bigr)^{d-1}\\
&=&
\Bigl(p(x)+q(y)\Bigr)^{d-1}
=
P(x,y)^{d-1}.
\end{array}
\]
Since $P_*(x,y)$ is not a rational function,
we see that the polar map $\varphi_P(x,y)$ cannot be birational so that
$P(x,y)$ cannot be a homaloidal polynomial.
\end{proof}

\subsection{Daisy cases}

Let $\mathtt{T}$ be a triangle arrangement which is constructed by attaching $n$ triangles at one vertex for $n\ge 2$
(see (1) of Figure ~\ref{fig:general}).
In this case,
the number of vertices are $2n+1$, that is, $\dim V=2n+1$.
The $i$-th triangle consists of vertices $\{i,i+1,2n+1\}$.
The corresponding polynomial $p(x)$ is
\[
p(x)=(x_1x_2+x_3x_4+\cdots+x_{2n-1}x_{2n})x_{2n+1}.
\]
Obviously, $p(x)$ is obtained as a product of polynomials of degree one and two,
which do not share variables, so that 
the corresponding triplet $(\mathfrak{g}[p],d\rho,V)$ is a prehomogeneous vector space.
Let $J$ be a $2n \times 2n$ matrix defined by
\[
J=\mathrm{diag}(J',\dots,J'),\quad
J'=\pmat{0&1\\1&0}.
\]

\begin{Lemma}
The prehomogeneous vector space $(\mathfrak{g}[p],d\rho,V)$ is regular and reductive.
A general element $M$ of $\mathfrak{g}[p]$ is of the form
\[
M=\pmat{tI_{2n}+M'&0\\0&M_{2n+1}},\quad
(t,M_{2n+1}\in\C,\ M'\in\mathfrak{so}(J)),
\]
and hence one has $\dim\mathfrak{g}[p]=2n^2-n+2$.
The basic relative invariants are given as
\[
p_0(x)=p(x),\quad
p_1(x)=x_{2n+1}\quad(x\in V).
\]
\end{Lemma}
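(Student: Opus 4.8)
The plan is to exploit the factorisation $p(x)=q(y)\,x_{2n+1}$, where $y=(x_1,\dots,x_{2n})$ and $q(y)=x_1x_2+\cdots+x_{2n-1}x_{2n}=\tfrac12\,{}^{t\!}yJy$ is a non-degenerate quadratic form (prehomogeneity of the triplet is already clear from this product structure, as noted above; what remains is the finer description). First I would write a general $M\in\mathfrak{gl}(2n+1,\C)$ in block form $M=\smat{A&b\\ {}^{t\!}c&d}$ with $A\in\mathrm{Mat}(2n,\C)$, $b,c\in\C^{2n}$, $d\in\C$, and compute $\innV{d\rho(M)x}{\nabla_xp(x)}$ from $\nabla_xp=\smat{x_{2n+1}Jy\\ q(y)}$. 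Expanding and collecting by powers of $x_{2n+1}$ gives three conditions: the $x_{2n+1}^0$-part is $({}^{t\!}c\,y)\,q(y)\equiv0$, which forces $c=0$ because $\C[y]$ is an integral domain and $q\not\equiv0$; the $x_{2n+1}^2$-part is ${}^{t\!}bJy\equiv0$, which forces $b=0$ since $J$ is invertible; and the $x_{2n+1}^1$-part is the quadratic identity ${}^{t\!}y\bigl({}^{t\!}AJ+\tfrac d2J\bigr)y\equiv0$, equivalent in characteristic $0$ to ${}^{t\!}AJ+JA+dJ=0$. Writing $A=-\tfrac d2I_{2n}+M'$ turns the last condition into ${}^{t\!}M'J+JM'=0$, i.e.\ $M'\in\mathfrak{so}(J)$. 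Hence $\mathfrak{g}_0[p]$ consists of block-diagonal matrices $\mathrm{diag}\bigl(-\tfrac d2I_{2n}+M',\,d\bigr)$; adjoining the $\mathfrak{gl}(1)$-summand $tI_{2n+1}$ and re-parametrising (the map $(t,d)\mapsto(t-\tfrac d2,\,t+d)$ is a linear isomorphism of $\C^2$) yields exactly the claimed form $M=\smat{tI_{2n}+M'&0\\0&M_{2n+1}}$ with $t,M_{2n+1}\in\C$ free and $M'\in\mathfrak{so}(J)$. Counting gives $\dim\mathfrak{g}[p]=\dim\mathfrak{so}(2n,\C)+2=\binom{2n}{2}+2=2n^2-n+2$.

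For regularity and the basic relative invariants I would exponentiate: $\exp M$ acts by $(y,x_{2n+1})\mapsto(e^{t}g\,y,\,e^{M_{2n+1}}x_{2n+1})$ with $g=\exp M'\in SO(J)^\circ=SO(2n,\C)$. Since $SO(2n,\C)$ is connected and acts transitively on every non-zero level set of $q$, while the two scalar factors move $q(y)$ and $x_{2n+1}$ arbitrarily within $\C^\times$, the group $\exp\mathfrak{g}[p]$ is transitive on $\set{x\in V}{q(y)\ne0,\ x_{2n+1}\ne0}=\set{x\in V}{p(x)\ne0}$. Thus the complement of the open orbit is the hypersurface $\{p=0\}$, so the prehomogeneous vector space is regular; its irreducible components $\{q=0\}$ and $\{x_{2n+1}=0\}$ give basic relative invariants $q(y)$ and $x_{2n+1}$, and since these span the same multiplicative group as $p(x)=q(y)x_{2n+1}$ and $x_{2n+1}$, one may record $p_0(x)=p(x)$ and $p_1(x)=x_{2n+1}$. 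Finally $\mathfrak{g}[p]$ is the direct sum of its $2$-dimensional centre and $\mathfrak{so}(2n,\C)$, which is semisimple for $n\ge2$; hence $\mathfrak{g}[p]$ is reductive and so is the prehomogeneous vector space.

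The computation is routine once the factorisation is in place; the only steps needing a little care are the passage from ${}^{t\!}ySy\equiv0$ to $S+{}^{t\!}S=0$ together with the bookkeeping that isolates $\mathfrak{so}(J)$, and the transitivity of $SO(2n,\C)$ — rather than merely $O(2n,\C)$, which is what Witt's theorem gives directly — on a non-zero quadric level set. I expect the latter to be the only genuine subtlety; it follows because such a level set is an irreducible affine variety on which the $SO$-orbit is open, hence dense, hence all of it. Alternatively this point can be sidestepped entirely, since prehomogeneity is already granted by the product structure and all one needs from the group action is the explicit open orbit $\{p\ne0\}$ supplied by the description of $\mathfrak{g}[p]$ above.
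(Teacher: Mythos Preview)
Your proof is correct and follows the same approach as the paper, which simply records that $p(x)=\tfrac12\innV{x'}{Jx'}\cdot x_{2n+1}$ and that $\mathfrak{so}(J)$ is the Lie algebra of the quadratic form $\tfrac12\innV{x'}{Jx'}$; you have merely supplied the details (the block computation isolating $\mathfrak{so}(J)$, the dimension count, the explicit open orbit, and the reductivity) that the paper's one-line proof leaves to the reader.
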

\begin{proof}
It is enough to mention that $\mathfrak{so}(J)$ is a Lie algebra with respect to the following bilinear form
\[
x_1x_2+x_3x_4+\cdots+x_{2n-1}x_{2n}
=
\frac12\innV{x'}{Jx'}\quad(x'=(x_1,x_2,\dots,x_{2n})),
\]
and we have $p(x)=\frac12\innV{x'}{Jx'}\cdot x_{2n+1}$.
\end{proof}

\subsection{Chain cases}

Let $\mathtt{T}$ be a triangle arrangement which is constructed by arraying $n$ triangles in a row with $n\ge 3$
(see (2) in Figure~\ref{fig:general}).
In this case, the number of vertices are $2n+1$, that is, $\dim V=2n+1$.
The $i$-th triangle consists of vertices $\{i,\,n+i,\,n+i+1\}$.
The corresponding polynomial $p(x)$ is given as
\[
p(x)=\sum_{i=1}^nx_ix_{n+i}x_{n+i+1}\quad(x\in V).
\]
Let $E_{ij}^{(m)}$ (resp.\ $E'_{ij}$) be a matrix unit of size $m\times m$ (resp.\ $n\times (n+1)$) having one on the position $(i,j)$ and zeros elsewhere.

\begin{Lemma}
A general element $M$ of $\mathfrak{g}[p]$ is of the form
\[
M=\pmat{M_{11}&M_{12}\\0&M_{22}}
\]
where
\[
\begin{cases}
\ds
M_{11}=
bE_{21}^{(n)}+cE_{n-1,n}^{(n)}
+
\sum_{i=1}^n(t-t_i-t_{i+1})E_{ii}^{(n)},\\
\ds
M_{12}=\sum_{i=1}^{n-1}a_i(E'_{i+1,i}-E'_{i,i+2}),\\
\ds
M_{22}=
-bE_{13}^{(n+1)}-cE_{n+1,n-1}^{(n+1)}
+
\sum_{i=1}^{n+1}t_iE_{ii}^{(n+1)}.
\end{cases}
\]
\end{Lemma}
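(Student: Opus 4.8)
The plan is to determine $\mathfrak g[p]$ directly from the defining condition $\innV{d\rho(M)x}{\nabla_xp(x)}\in\C\,p(x)$. Writing $L_a:=(d\rho(M)x)_a=\sum_cM_{ac}x_c$ and grouping in \eqref{eq:expand} the three partial derivatives attached to the $k$-th triangle, one obtains
\[
\innV{d\rho(M)x}{\nabla_xp(x)}
=\sum_{k=1}^n\bigl(L_k\,x_{n+k}x_{n+k+1}+L_{n+k}\,x_kx_{n+k+1}+L_{n+k+1}\,x_kx_{n+k}\bigr),
\]
so that $M\in\mathfrak g[p]$ exactly when the right-hand side is a scalar multiple of $p(x)$. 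As $\mathtt T$ has no edge sharing, the discussion of Section~\ref{sect:structure} applies: $M_{ab}$ can be nonzero only if $d_{\rm graph}(a,b)\le 2$, and $M_{ab}=0$ whenever $d_{\rm graph}(a,b)=1$. In the present graph the isolated vertices are $1,\dots,n$ together with the two endpoints $n+1,2n+1$ of the top path, and the vertex pairs at distance exactly $2$ are the consecutive bottoms $\{i,i+1\}$, the tops $\{n+j,n+j+2\}$, and the mixed pairs $\{i,n+i-1\}$ and $\{i,n+i+2\}$. Thus only $O(n)$ off-diagonal entries can be nonzero.

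The remaining task is to read off the coefficient of every cubic monomial. The triangle monomial $x_kx_{n+k}x_{n+k+1}$ produces the ``case~(0)'' relation $M_{kk}+M_{n+k,n+k}+M_{n+k+1,n+k+1}=t$, where $t$ is the common scalar; putting $t_j:=M_{n+j,n+j}$ gives $M_{kk}=t-t_k-t_{k+1}$, which is the diagonal part of $M_{11}$ and $M_{22}$. For the off-diagonal entries, the no-edge-sharing property guarantees that each candidate entry occurs in essentially one non-triangle coefficient, and one follows these: the three-top monomials $x_{n+k}x_{n+k+1}x_{n+k+2}$ give $M_{i+1,n+i}+M_{i,n+i+2}=0$ for $i=1,\dots,n-1$ and nothing more, so $a_i:=M_{i+1,n+i}$ is free and $M_{12}=\sum_{i=1}^{n-1}a_i(E'_{i+1,i}-E'_{i,i+2})$; the monomials with a single top force the entire lower-left block of $M$ to vanish; and the monomials $x_kx_{n+k}x_c$ and $x_kx_{n+k+1}x_c$ with $c$ a bottom vertex kill $M_{i+1,i}$ for $i\ge 2$ and $M_{i,i+1}$ for $i\le n-2$, leaving $b:=M_{2,1}$ and $c:=M_{n-1,n}$ free and tied to the surviving top-top entries by $M_{n+1,n+3}=-b$ and $M_{2n+1,2n-1}=-c$. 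Assembling these data gives exactly the asserted block form, and $\dim\mathfrak g[p]=2n+3$ once one verifies that no further linear relation among $t,t_1,\dots,t_{n+1},a_1,\dots,a_{n-1},b,c$ remains.

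The one genuinely delicate point, and the step where the hypothesis $n\ge 3$ is used, is the vanishing of the lower-left block. Its distance-$2$ entries fall into the two families $M_{n+q-1,q}$ and $M_{n+q+2,q}$, and the straightforward coefficient equations only relate them to one another in a cycle. To break the cycle one isolates the corner entry $M_{n+3,1}$ via the monomial $x_1x_3x_{n+4}$, whose coefficient turns out to equal $M_{n+3,1}$ alone; this monomial exists precisely because the triangle $x_3x_{n+3}x_{n+4}$ occurs, i.e.\ because $n\ge 3$. Once $M_{n+3,1}=0$ is established, the relations propagate along both families and the whole lower-left block vanishes. (When $n=2$ the polynomial factors as $x_{n+2}(x_1x_{n+1}+x_2x_{n+3})$, the Lie algebra is strictly larger, and $M_{n+3,1}$ indeed survives as a free parameter, which is why $n\ge 3$ is assumed.) The rest of the argument is routine index bookkeeping over the finitely many monomial types.
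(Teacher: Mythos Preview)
Your approach is the same as the paper's: invoke the no-edge-sharing analysis of Section~\ref{sect:structure} to cut down to entries at graph distance $\le 2$, then read off the surviving relations. The paper's own proof is in fact terser than yours---it simply records $M_{j,n+j+2}+M_{j+1,n+j}=0$ for $j=1,\dots,n$ (from the isolated bottom vertices via \eqref{eq:key eq}), the two endpoint relations $M_{n+1,n+3}+M_{2,1}=0$ and $M_{2n+1,2n-1}+M_{n-1,n}=0$ (from the isolated tops $n+1$ and $2n+1$), and then declares ``the other terms are all zeros'', the implicit justification being cases (2-i) and (2-ii) of Section~\ref{sect:structure}.

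There is, however, a genuine slip in your last paragraph. You describe the lower-left entries as linked ``in a cycle'' so that the single corner value $M_{n+3,1}=0$ ``propagates'' to annihilate the whole block. That is not the case: the monomials $x_qx_{q+1}x_{n+q+1}$ give only the chain relations $M_{n+q,q+1}+M_{n+q+2,q}=0$, which pair family~1 at index $q+1$ with family~2 at index $q$ but do not connect different pairs to one another. Knowing $M_{n+3,1}=0$ therefore forces only $M_{n+1,2}=0$ and nothing further. What actually kills the rest is that almost every entry is annihilated individually by its own single-top monomial: $x_{q-2}\,x_q\,x_{n+q-2}$ has coefficient exactly $M_{n+q-1,q}$ for each $q\ge 3$, and $x_q\,x_{q+2}\,x_{n+q+3}$ has coefficient exactly $M_{n+q+2,q}$ for each $q\le n-2$. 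The two remaining corner entries $M_{n+1,2}$ and $M_{2n+1,n-1}$ then fall to the chain relations. This is precisely the content of case~(2-ii) in Section~\ref{sect:structure}, and is how the paper disposes of the block in one line. Your observation that $n\ge 3$ is genuinely needed, and your explanation of what goes wrong for $n=2$, are correct.
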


\begin{proof}
Since the triangle arrangement $\mathtt{T}$ does not have edge sharing,
we can apply the discussion in Section \ref{sect:structure}.
Let $j=1,\dots,n$.
Then, the vertex $j$ is isolated, and the equation \eqref{eq:key eq} can be written as
\[
M_{j,n+j+2}+M_{j+1,n+j}=0\quad(j=1,\dots,n).
\]
Moreover, the vertices $i=n+1$ and $i=2n+1$ are also isolated so that
the equation \eqref{eq:key eq} again implies 
\[
M_{n+1,n+3}+M_{2,n+1}=0,\quad
M_{2n+1,2n-1}+M_{n-1,n}=0.
\]
The other terms are all zeros so that
the proof is now completed.
\end{proof}

This lemma yields that
$\mathfrak{g}[p]$ is a solvable Lie algebra of $\dim\mathfrak{g}[p]=2n+3$.

\begin{Lemma}
The triplet $(\mathfrak{g}[p],d\rho,V)$ is a regular prehomogeneous vector space for all $n\ge 3$.
Its basic relative invariants $p_i(x)$ $(i=1,\dots,n)$ are given as
\[
p_1(x)=p(x),\quad
p_i(x)=x_{n+i}\quad(i=2,\dots,n).
\]
\end{Lemma}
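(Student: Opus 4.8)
The plan is to verify prehomogeneity directly via Lemma~\ref{lemma} by exhibiting, at a suitable generic reference point, that the rank of the map $A(x)\colon\mathfrak{g}[p]\to V$ equals $\dim V=2n+1$; equivalently, to show the orbit $\mathfrak{g}[p]\cdot x_*$ spans $V$ for generic $x_*$. Since $\dim\mathfrak{g}[p]=2n+3$ from the previous lemma, we have two extra dimensions, so the count is consistent with prehomogeneity provided the obvious relative invariants $p(x)$ and $x_{n+2},\dots,x_n$ account for exactly the generic isotropy. First I would check that each claimed $p_i(x)$ is genuinely relatively invariant under $\mathfrak{g}[p]$: for $p_1(x)=p(x)$ this is automatic from the definition of $\mathfrak{g}[p]$ (it contains $\mathfrak{gl}(1)$ acting by scalars and $\mathfrak{g}_0[p]$ fixing $p$), and for $p_i(x)=x_{n+i}$ ($i=2,\dots,n$) one reads off from the explicit form of $M$ in the structure lemma that the $(n+i)$-th row of $M$ has its only possibly-nonzero off-diagonal-block contributions killed — more precisely $d\rho(M)x$ in coordinate $n+i$ is $t_{n+i}x_{n+i}$ plus terms $-bE_{13}^{(n+1)}$ or $-cE_{n+1,n-1}^{(n+1)}$ which only affect coordinates $n+1$ and $2n+1$, so for $3\le i\le n-1$ the coordinate $x_{n+i}$ transforms by the scalar $t_{n+i}$, hence is relatively invariant; the boundary indices $i=2$ and $i=n$ need a moment's separate inspection but work the same way.

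Next I would pick an explicit generic point, say $x_*$ with all coordinates equal to $1$ (or a generic choice making $p(x_*)\ne 0$ and $x_{n+i}(x_*)\ne 0$), and compute the image vectors $d\rho(M)x_*$ as $M$ ranges over the listed spanning set of $\mathfrak{g}[p]$: the diagonal parameters $t,t_1,\dots,t_{n+1}$, the two parameters $b,c$, and the $n-1$ parameters $a_1,\dots,a_{n-1}$. The diagonal part already produces vectors supported on single coordinates except for the linear relations $M_{ii}=t-t_i-t_{i+1}$ coupling the first block to the second; together with $b,c$ (which feed coordinates $1,2$ in the first block and $n-1,n+1,2n+1$ in the second) one sees the diagonal-plus-$b,c$ span is large. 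The $a_i$-vectors, from $M_{12}=\sum a_i(E'_{i+1,i}-E'_{i,i+2})$, contribute $d\rho(a_i(E'_{i+1,i}-E'_{i,i+2}))x_* = a_i(x_{n+i}\,\bs e_{i+1} - x_{n+i+2}\,\bs e_i)$ which cover the remaining first-block coordinates. I would then assemble these $2n+3$ image vectors into a $(2n+1)\times(2n+3)$ matrix and show it has full row rank $2n+1$ at the chosen point — a finite linear-algebra check, cleanest done by ordering coordinates as (first block $1,\dots,n$; then $n+1,\dots,2n+1$) and exhibiting a $(2n+1)$-element subset of columns forming a triangular (hence invertible) submatrix.

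For regularity I would then invoke the standard criterion: a prehomogeneous vector space is regular iff the generic isotropy subalgebra is reductive, or — more convenient here — iff the Hessian of $\log p_0$ (equivalently the product of the basic relative invariants) is nondegenerate on the open orbit, which for chain cases can be read off from the dual polynomial. Alternatively, since the $p_i$ are monomials for $i\ge 2$ and $p_1=p$ is multilinear-type, I would compute the $b$-function / the relative invariant $\prod p_i$ and check its gradient map is dominant; but the quickest route is to note regularity follows once we show the singular set $\{x : \prod_{i}p_i(x)=0\}$ is a hypersurface and the generic isotropy is reductive, and for these solvable-but-split situations one checks directly that the isotropy at $x_*$ is a torus (the kernel of $A(x_*)$, being $2$-dimensional by the rank computation, is spanned by two commuting semisimple elements — visible because the only nontrivial brackets in $\mathfrak{g}[p]$ involve the nilpotent $a_i,b,c$ directions which act nontrivially on $x_*$). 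The main obstacle I anticipate is not any single step but the bookkeeping at the two boundary triangles $i=1$ and $i=n$: the patterns $M_{j,n+j+2}+M_{j+1,n+j}=0$ and the special relations $M_{n+1,n+3}+M_{2,n+1}=0$, $M_{2n+1,2n-1}+M_{n-1,n}=0$ make the $b$ and $c$ parameters behave slightly differently from the interior $a_i$'s, so the rank matrix is not perfectly uniform and one must handle small $n$ (the case $n=3$, where "interior" is empty) as an explicit base case before the general pattern is clear.
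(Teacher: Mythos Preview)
Your outline is correct for prehomogeneity but takes a different route from the paper, and it leaves a gap on the ``basic'' part of the claim.  The paper's key move is to pass to the subalgebra $\mathfrak{h}=\{M\in\mathfrak{g}[p]:b=c=0\}$, which has $\dim\mathfrak{h}=2n+1=\dim V$; then $A(x)\colon\mathfrak{h}\to V$ is a \emph{square} matrix and one computes, by cofactor expansion along the first column for general $x$,
\[
\det A(x)=x_{n+1}x_{n+2}(x_{n+3}\cdots x_{2n-1})^2x_{2n}x_{2n+1}\cdot p(x).
\]
This one formula does triple duty: generic full rank (hence prehomogeneity of $\mathfrak{h}$, a fortiori of $\mathfrak{g}[p]$), and its irreducible factors are automatically the basic relative invariants for $\mathfrak{h}$; then one just observes that the $b$ and $c$ directions in $\mathfrak{g}[p]\setminus\mathfrak{h}$ destroy relative invariance of exactly $x_{n+1}$ and $x_{2n+1}$, leaving the stated list $p(x),x_{n+2},\dots,x_{2n}$.

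By contrast you work with the full $(2n+3)$-dimensional algebra at a single point $x_*$, which establishes prehomogeneity but does not by itself show the $p_i$ are \emph{basic} (i.e.\ generate all relative invariants): your sentence ``two extra dimensions \dots\ account for exactly the generic isotropy'' conflates the dimension of the generic isotropy (which is $2$) with the number of basic relative invariants (which is $n$), and no argument is given that nothing beyond $p(x)$ and the $x_{n+i}$ can occur.  Your regularity sketch via ``the isotropy at $x_*$ is a torus'' is reasonable; the paper instead handles regularity by a separate explicit computation of the dual map $A^*(x)$ and its determinant, identifying the dual basic invariants.
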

\begin{proof}
We use Lemma~\ref{lemma}.
To make discussion simple,
we consider a subalgebra $\mathfrak{h}$ of $\mathfrak{g}[p]$ defined by
\[\mathfrak{h}:=\set{M\in\mathfrak{g}[p]}{b=c=0},\]
where we use an expression of $M\in\mathfrak{g}[p]$ as in the above lemma.
For $x\in V$,
let $A(x)\colon \mathfrak{g}[p]\to V$ be a linear map defined by $A(x)M:=d\rho(M)x$ $(M\in\mathfrak{h})$.
Then, since $\dim\mathfrak{h}=\dim V$, we see that $A(x)$ is a square matrix and by the above lemma
{\tiny
\[
A(x)=
\left(
\begin{array}{c|cccc|cccccc}
x_1&-x_{n+3}&0&\cdots&0&-x_1&-x_1&0&\cdots&&0\\
x_2&x_{n+1}&-x_{n+4}&\ddots&\vdots&0&-x_2&-x_2&\ddots&&0\\
\vdots&0&x_{n+2}&\ddots&0&\vdots&\ddots&\ddots&\ddots&&\vdots\\
\vdots&\vdots&\ddots&\ddots&-x_{2n+1}&&&&&-x_{n-1}&0\\
x_n&0&\cdots&0&x_{2n-1}&0&\cdots&&0&-x_n&-x_n\\ \hline
0&&&&&x_{n+1}\\
0&&&&&&x_{n+2}\\
0&&&&&&&\ddots\\
0&&&&&&&&\ddots\\
0&&&&&&&&&x_{2n}\\
0&&&&&&&&&&x_{2n+1}
\end{array}\right).
\]
}
Thus, we can calculate its determinant by using cofactor expansion at the first column as
\[
\begin{array}{r@{\ }c@{\ }l}
\det A(x)
&=&
\ds
x_{n+1}\cdots x_{2n+1}\det\pmat{
x_1&-x_{n+3}&0&\cdots&0\\
x_2&x_{n+1}&-x_{n+4}&\ddots&\vdots\\
\vdots&0&x_{n+2}&\ddots&0\\
\vdots&\vdots&\ddots&\ddots&-x_{2n+1}\\
x_n&0&\cdots&0&x_{2n-1}
}\\
&=&
\ds
x_{n+1}\cdots x_{2n+1}
\sum_{i=1}^n
x_ix_{n+i}x_{n+i+1}\cdot x_{n+3}\cdots x_{2n-1}\\
&=&
\ds x_{n+1}x_{n+2}(x_{n+3}\cdots x_{2n-1})^2x_{2n}x_{2n+1}\cdot p(x).
\end{array}
\]
This shows that $A(x)$ has generic full rank and hence the triplet $(\mathfrak{g}[p],d\rho,V)$ is a prehomogeneous vector space.
This also shows that polynomials $x_{n+i}$ $(i=1,\dots,n+1)$ are basic relative invariants with respect to $\mathfrak{h}$.
Among them, two polynomials $x_{n+1}$ and $x_{2n+1}$ are not relatively invariant
because $\mathfrak{g}[p]$ has $b,c$ parts,
but the other ones are relatively invariant with respect to $\mathfrak{g}[p]$.
\end{proof}

We now calculate on the dual prehomogeneous vector space.
Let us consider a linear map $A^*(x)\colon \mathfrak{h}\to V$ $(x\in V)$ defined by
$A^*(x)M:={}^{t\!}d\rho(M)x$ $(M\in\mathfrak{h})$.
Then, $A^*(x)$ is a matrix of the form
{\tiny
\[
A^*(x)=
\left(
\begin{array}{c|cccccc|cccc}
x_1&-x_1&-x_1&&&&&&	\\
x_2&&-x_2&-x_2&&&&&&	\\
\vdots&&&\ddots&\ddots&&&&	\\
x_{n-1}&&&&-x_{n-1}&-x_{n-1}&&\\
x_n&&&&&-x_n&-x_n	\\ \hline
0&x_{n+1}&&&&&&	x_2\\
0&&x_{n+2}&&&&&	0&\ddots\\
\vdots&&&\ddots&&&&	-x_1&\ddots&\ddots\\
\vdots&&&&\ddots&&&	&\ddots&\ddots&x_n\\
0&&&&&x_{2n}&&	&&\ddots&0\\
0&&&&&&x_{2n+1}&	&&&-x_{n-1}\\
\end{array}
\right).
\]
}
Its determinant can be calculated as 
if $n=2k$ is even then
\[
\det A^*(x)=
x_1\cdots x_{n}\times \det
\pmat{
x_{n+1}&0&	x_2\\
0&x_{n+2}&	0&\ddots\\
x_{n+3}&0&	-x_1&\ddots&\ddots\\
\vdots&\vdots&&\ddots&\ddots&x_{n}	\\
0&x_{2n}&	&&-x_{n-2}&0\\
x_{2n+1}&0&	&&&-x_{n-1}
},
\]
and if $n=2k+1$ is odd, then
\[
\det A^*(x)=
x_1\cdots x_{n}\times \det
\pmat{
x_{n+1}&0&	x_2\\
0&x_{n+2}&	0&\ddots\\
x_{n+3}&0&	-x_1&\ddots&\ddots\\
\vdots&\vdots&&\ddots&\ddots&x_{n}	\\
x_{2n}&0&	&&-x_{n-2}&0\\
0&x_{2n+1}&	&&&-x_{n-1}
}.
\]
Thus,
the dual prehomogeneous vector space
has basic relative invariants $q_i(x)$ $(i=0,1,\dots,n-1)$ defined by
\[
q_i(x)=x_i\quad(i=2,\dots,n-1).
\]
The other ones $q_0(x)$ and $q_1(x)$ are defined according to $n$ is odd or even.
If $n$ is even, then 
\[
q_0(x)=\left|\begin{smallmatrix}
x_{n+1}&x_2\\
x_{n+3}&-x_1&x_4\\
\vdots&&\ddots&\ddots\\
\vdots&&&-x_{n-3}&x_{n}\\
x_{2n+1}&0&\cdots&0&-x_{n-1}
\end{smallmatrix}\right|,\quad
q_1(x)=\left|\begin{smallmatrix}
x_{n+2}&x_3\\
x_{n+4}&-x_2&x_5\\
\vdots&&\ddots&\ddots\\
\vdots&&&-x_{n-4}&-x_{n-1}\\
x_{2n}&0&\cdots&0&-x_{n-2}
\end{smallmatrix}\right|
\]
and if $n$ is odd, then
\[
q_0(x)=\left|\begin{smallmatrix}
x_{n+1}&x_2\\
x_{n+3}&-x_1&x_4\\
\vdots&&\ddots&\ddots\\
\vdots&&&-x_{n-4}&x_{n-1}\\
x_{2n}&0&\cdots&0&-x_{n-2}
\end{smallmatrix}\right|,\quad
q_1(x)=\left|\begin{smallmatrix}
x_{n+2}&x_3\\
x_{n+4}&-x_2&x_5\\
\vdots&&\ddots&\ddots\\
\vdots&&&-x_{n-3}&-x_{n}\\
x_{2n+1}&0&\cdots&0&-x_{n-1}
\end{smallmatrix}\right|
\]

\begin{Remark}
Let $n=3$.
In this case,
we have
\[
p(x)=p_1(x)=x_1x_4x_5+x_2x_5x_6+x_3x_6x_7\quad(x\in V),
\]
and the other basic relative invariants are $p_2(x)=x_5$ and $p_3(x)=x_6$.
Let us change variables as follows.
\[
\begin{array}{c}
\ds
z_{11}=x_2,\quad
z_{22}=x_5,\quad
z_{33}=x_6,\\[1ex]
\ds
a_{12}=\frac{x_3-x_7}{2},\quad
b_{12}=\frac{x_3+x_7}{2\sqrt{-1}},\quad
a_{13}=\frac{x_1-x_4}{2},\quad
b_{13}=\frac{x_1+x_4}{2\sqrt{-1}}
\end{array}
\]
Then, the polynomials $p_i(x)$ $(i=1,2,3)$ are transferred to
\[
p_1(z)=z_{11}z_{22}z_{33}-z_{22}(a_{13}^2+b_{13}^2)-z_{33}(a_{12}^2+b_{12}^2),\quad
p_2(z)=z_{22},\quad
p_3(z)=z_{33}.
\]
These three polynomials $p_i(z)$ $(i=1,2,3)$ are exactly all the basic relative invariants of a homogeneous open convex cones $\Omega$ defined by
\[
\Omega:=\set{Z=\pmat{z_{11}&\overline{z_{12}}&\overline{z_{13}}\\z_{12}&z_{22}&0\\z_{13}&0&z_{33}}\in\mathrm{Herm}(3,\C)}{\det Z>0,\,z_{22},z_{33}>0}.
\]
This cone $\Omega$ is a typical example of non-symmetric homogeneous open convex cone,
which can be viewed as a generalization of the so-called Vinberg cone.
Therefore, the corresponding prehomogeneous vector space associated with $p(x)$ is lieanrly isomorphic to
the prehomogeneous vector space obtained from the cone $\Omega$.
\end{Remark}

Note that the excluded case $n=2$ is calculated in Example \ref{exam:lie}.

\subsection{Circular cases}

Let $\mathtt{T}$ be a triangle arrangement which is constructed by arraying $n$ triangles circularly with $n\ge 5$
(see (3) in Figure~\ref{fig:general}).
In this case, the number of vertices are $2n$, that is, $\dim V=2n$.
Set $\varphi_n(k):=k\mathrm{\ mod\ }n\subset\{1,\dots,n\}$.
The $i$-th triangle consists of vertices $\{i,\,n+i,\,n+\varphi_n(i+1)\}$.
Then, the corresponding polynomial $p(x)$ is described as
\[
p(x)=\sum_{i=1}^nx_ix_{n+i}x_{n+\varphi_n(i+1)}.
\]
Let $E_{ij}$ be a matrix unit having $1$ on the position $(i,j)$ and zeros elsewhere.

\begin{Lemma}
A general element $M$ in $\mathfrak{g}[p]$ is of the form
\[
M=\pmat{
\mathrm{diag}(t_0-t_{i}-t_{\varphi_n(i+1)})_{i=1}^n&X\\
0&\mathrm{diag}(t_{i})_{i=1}^n
}
\]
where $X$ is an $n\times n$ matrix defined by 
\[
X=\sum_{i=1}^nX_{i+1,i}(E_{\varphi_n(i+1),i} - E_{i,\varphi_n(i+2)}).
\]
\end{Lemma}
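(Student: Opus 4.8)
The plan is to compute $\mathfrak{g}[p]$ directly from the condition $\innV{d\rho(M)x}{\nabla_xp(x)}=0$, using the structural analysis of Section~\ref{sect:structure}, since $\mathtt{T}$ has no edge sharing. First I would identify, for each vertex, its graph neighbors: the ``outer'' vertices $1,\dots,n$ are each isolated (each lies in exactly one triangle $\{i,n+i,n+\varphi_n(i+1)\}$), while the ``inner'' vertices $n+1,\dots,2n$ each lie in exactly two triangles and form a cycle. By the discussion in Section~\ref{sect:structure}, an entry $M_{ab}$ can be nonzero only if $d_{\mathrm{graph}}(a,b)\le 2$, and the cases $d_{\mathrm{graph}}=1$ force $M_{ab}=0$. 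So the only candidates for nonzero off-diagonal entries are: (0) the diagonal, and (2) pairs at graph distance $2$.

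Next I would run case~(2) for the distance-$2$ pairs. Case (2-i) applies precisely when the source vertex $i$ is isolated, i.e. $i\in\{1,\dots,n\}$; the relevant target is another isolated vertex, but here the only vertices at distance $2$ from an outer vertex $i$ are outer vertices, and one checks via \eqref{eq:key eq} that the surviving relations pair the entry $X_{i+1,i}$ at position $(\varphi_n(i+1),i)$ with the entry at position $(i,\varphi_n(i+2))$ with opposite sign — exactly the combination $E_{\varphi_n(i+1),i}-E_{i,\varphi_n(i+2)}$ claimed. (Here the ``ring of triangles'' exception of Section~\ref{sect:structure}, excluded there, is in fact present — the whole arrangement is one big ring — so I must argue directly, not merely cite the generic statement; this is where I would be most careful.) All mixed inner/outer distance-$2$ entries and all inner/inner entries are then shown to vanish by the same edge-sharing arguments: any putative additional triangle needed to make such a term cancel would create an edge sharing, contradicting the hypothesis. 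This pins down the off-diagonal part to be the block $X$ of the stated form with the lower-left block zero.

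Finally I would handle case~(0): setting $a=i$ in \eqref{eq:expand} gives, for each triangle $\{i,n+i,n+\varphi_n(i+1)\}$, the relation $M_{ii}+M_{n+i,n+i}+M_{n+\varphi_n(i+1),n+\varphi_n(i+1)}=0$. Writing the inner diagonal entries as $t_1,\dots,t_n$ (with $t_i=M_{n+i,n+i}$) and introducing $t_0$ for the scalar shift, this forces the $i$-th outer diagonal entry to equal $t_0-t_i-t_{\varphi_n(i+1)}$, matching the two diagonal blocks $\mathrm{diag}(t_0-t_i-t_{\varphi_n(i+1)})_{i=1}^n$ and $\mathrm{diag}(t_i)_{i=1}^n$ in the statement. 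Conversely, I would check that every matrix of the displayed form indeed satisfies $\innV{d\rho(M)x}{\nabla_xp(x)}=0$: the diagonal part contributes exactly the coefficient of $x_ix_{n+i}x_{n+\varphi_n(i+1)}$, which is zero by construction, and the $X$-part contributes, for each $i$, the two monomials $X_{i+1,i}\,x_{\varphi_n(i+1)}\cdot x_{n+i}x_{n+\varphi_n(i+1)}$ and $-X_{i+1,i}\,x_i\cdot x_{n+\varphi_n(i+1)}x_{n+\varphi_n(i+2)}$; reindexing shows these telescope around the cycle and cancel in pairs. The main obstacle is the bookkeeping of indices modulo $n$ in this cancellation — keeping $\varphi_n(i+1)$ and $\varphi_n(i+2)$ straight and verifying the pairing is global around the ring — rather than anything conceptually new beyond Section~\ref{sect:structure}.
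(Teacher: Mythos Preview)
Your approach is essentially the paper's: apply the case analysis of Section~\ref{sect:structure}, obtain \eqref{eq:key eq} for the isolated outer vertices, and observe that all remaining off-diagonal entries vanish. Two small corrections are worth flagging. First, your claim that ``the only vertices at distance $2$ from an outer vertex $i$ are outer vertices'' is false: the inner vertices $n+\varphi_n(i-1)$ and $n+\varphi_n(i+2)$ are also at distance $2$, and it is precisely these that enter the surviving relation $M_{j,\,n+\varphi_n(j+2)}+M_{\varphi_n(j+1),\,n+j}=0$ (the positions you cite in $X$ are in the top-right block, hence outer-to-inner in the big matrix, not outer-to-outer). Second, your worry about the ``ring of triangles'' exception is unnecessary here: that exception in Section~\ref{sect:structure} refers to a \emph{local} configuration of three triangles forming a $\mathtt{T}_B$, and for $n\ge 5$ (the regime of this lemma) no such local triple occurs in the circular arrangement, so the analysis of Section~\ref{sect:structure} applies directly, exactly as the paper uses it.
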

\begin{proof}
Since the triangle arrangement $\mathtt{T}$ does not have edge sharing,
we can apply discussion in Section \ref{sect:structure}.
Let $j=1,\dots,n$.
Then, the vertex $j$ is isolated, and the equation \eqref{eq:key eq} can be written as
\[
M_{j,n+\varphi_n(j+2)}+M_{\varphi_n(j+1),n+j}=0\quad(j=1,\dots,n).
\]
The other terms are all zeros so that
the proof is now completed.
\end{proof}

This lemma implies that $\mathfrak{g}[p]$ is a solvable Lie algebra with $\dim \mathfrak{g}[p]=2n+1$.

\begin{Lemma}
Let $n\ge 5$.
The dual triplet $(\mathfrak{g}[p],d\rho^*,V)$ is also a prehomogeneous vector space
if and only if $n$ is an odd number.
If $n$ is odd,
then its basic relative invariants are given as
\[
q_0(x)=\sum_{i=1}^n x_{n+i}\prod_{j=0}^kx_{\varphi_n(i+2j)},\quad
q_i(x)=x_i\quad(i=1,\dots,n).
\]
Here,
indices of $x$ in the product symbol run through $1,\dots,n$ modulo $n$.
\end{Lemma}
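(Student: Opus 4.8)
The plan is to feed the dual triplet into Lemma~\ref{lemma} (applied to the contragredient representation): $(\mathfrak{g}[p],d\rho^*,V)$ is prehomogeneous if and only if the linear map $A^*(x)\colon\mathfrak{g}[p]\to V$, $A^*(x)M:={}^{t\!}d\rho(M)x={}^{t\!}Mx$, has generic rank $2n=\dim V$. Using the description of $\mathfrak{g}[p]$ from the previous lemma, I would write ${}^{t\!}M$ out explicitly (its diagonal blocks are $\mathrm{diag}(t_0-t_i-t_{\varphi_n(i+1)})$ and $\mathrm{diag}(t_i)$, and its lower-left block is ${}^{t\!}X$), so that $A^*(x)$ becomes a concrete $2n\times(2n+1)$ matrix whose columns are indexed by the parameters $t_0,\dots,t_n$ and $X_{2,1},\dots,X_{n+1,n}$. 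Two structural features drive the argument. First, the top $n$ rows (output coordinates $x_1,\dots,x_n$) involve only the columns $t_0,\dots,t_n$, and the $i$-th such row equals $x_i\,w_i$ with $w_i:=e_{t_0}-e_{t_i}-e_{t_{\varphi_n(i+1)}}\in\C^{n+1}$. Second, the diagonal of ${}^{t\!}M$ shows that $x_1,\dots,x_n$ are relatively invariant under $d\rho^*$, with characters $\chi_i:=t_0-t_i-t_{\varphi_n(i+1)}$; hence they must be among the basic relative invariants.

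For the ``only if'' direction, suppose $n$ is even. A direct check gives $\sum_{i=1}^n(-1)^iw_i=0$ (the $t_0$-coordinate is $-\sum_i(-1)^i$, which vanishes because $n$ is even, and the $t_j$-coordinate is $-(-1)^j-(-1)^{j-1}=0$). Thus the top $n$ rows of $A^*(x)$ are linearly dependent for every $x$ with $x_1\cdots x_n\neq0$, and since those rows carry no entries in the $X$-columns, the vector $\eta=\bigl((-1)^i/x_i\bigr)_{i=1}^n\oplus 0\in\C^{2n}$ satisfies $\eta^{t}A^*(x)=0$ for generic $x$. Hence $A^*(x)$ never attains rank $2n$, and the dual triplet is not prehomogeneous.

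For the ``if'' direction, suppose $n$ is odd and compute the left kernel of $A^*(x)$ directly. Writing $\eta=(\eta',\eta'')\in\C^n\oplus\C^n$, the $X$-columns impose $x_l\,\eta''_{\varphi_n(l+2)}=x_{\varphi_n(l+1)}\,\eta''_l$ for $l=1,\dots,n$; since $l\mapsto l+2$ is a single $n$-cycle for $n$ odd, this determines $\eta''$ from any one of its entries, the cycle closing consistently because the product of all the ratios telescopes to $\prod_ix_i/\prod_ix_i=1$. So $\eta''_l=c\,\psi_l(x)$ for an explicit monomial $\psi_l$ (the telescoped product), with $c$ the one free scalar. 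Next, the columns $t_0,\dots,t_n$ give, with $u_i:=\eta'_ix_i$, the relations $\sum_iu_i=0$ and the cyclic system $u_{j-1}+u_j=\eta''_jx_{n+j}$ $(j=1,\dots,n)$; summing the latter yields $2\sum_iu_i=\sum_j\eta''_jx_{n+j}$, so compatibility forces $c\sum_j\psi_j(x)x_{n+j}=0$. After fixing the normalization of $\psi$, the polynomial $\sum_j\psi_j(x)x_{n+j}$ is exactly $q_0(x)=\sum_{i=1}^nx_{n+i}\prod_{j=0}^kx_{\varphi_n(i+2j)}$, which is not identically zero since the factor $x_{n+i}$ separates its $n$ monomials. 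Hence at a point with $q_0(x)\neq0$ and $x_1\cdots x_n\neq0$ one gets $c=0$, so $\eta''=0$, whence $u=0$ (the circulant $u_{j-1}+u_j=0$ has determinant $1-(-1)^n=2$, so is invertible for $n$ odd), so $\eta'=0$: the left kernel is trivial and $A^*(x)$ has rank $2n$. By Lemma~\ref{lemma} the dual triplet is prehomogeneous. Finally, one verifies that $q_0$ is a relative invariant (its character is $\sum_{l=1}^nt_l-\tfrac{n+1}{2}t_0$, independent of $i$ by the same telescoping), that $q_0$ is irreducible (the $\psi_j$ have no common factor), and that the $n+1$ characters $\chi_1,\dots,\chi_n$ and that of $q_0$ are linearly independent on the $(n+1)$-dimensional abelian part of $\mathfrak{g}[p]$; since there can be no more basic relative invariants than that, $x_1,\dots,x_n$ and $q_0$ are precisely the basic relative invariants.

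The main obstacle is the bookkeeping in the third paragraph: verifying that the cyclic chain of $X$-relations closes up exactly when $n$ is odd, that the resulting cyclic system in the $t$-variables is then invertible, and recognising the Cramer-type obstruction as the stated polynomial $q_0$. Once $A^*(x)$ is written down explicitly from the structure lemma, everything else is routine linear algebra.
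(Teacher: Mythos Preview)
Your argument is correct and takes a genuinely different route from the paper. The paper proves both directions by determinant computation: for $n$ even it exhibits a column dependency in the top-left $n\times(n+1)$ block of $A^*(x)$, and for $n$ odd it deletes one $X$-column, factors the resulting $2n\times 2n$ determinant through a sequence of row/column operations, and identifies $q_0$ as one of the irreducible factors. You instead analyze the \emph{left kernel} of $A^*(x)$ directly: the $X$-columns force $\eta''$ to lie on a single cyclic orbit (consistent precisely because $n$ is odd), the $t$-columns reduce to the invertible circulant $u_{j-1}+u_j=r_j$, and the solvability constraint $\sum r_j=0$ is exactly $q_0(x)=0$. Your approach is shorter and explains structurally why $q_0$ appears, while the paper's determinant calculation gives the explicit factorization $\det A'(x)=\pm\,x_1\cdots x_n\cdot x_3x_5\cdots x_{2k-1}\cdot q_0(x)$ and hence the singular-set description in one stroke.

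Two small points to tighten. First, your claim that $q_0$ is a relative invariant ultimately rests on the fact that $\{q_0=0\}$ is an irreducible component of the singular set; you should say this explicitly (the character computation alone only checks the torus part, not the action of the nilpotent $X$-directions). Second, your determinant formula ``$1-(-1)^n=2$'' for the circulant is right but deserves one line of justification, e.g.\ $u_j=-u_{j-1}$ iterated $n$ times gives $u_1=(-1)^nu_1$, which for $n$ odd forces $u=0$.
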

\begin{proof}
We use Lemma~\ref{lemma}.
For simplicity,
we set $y_i:=x_{n+i}$ $(i=1,\dots,n)$
with indices being taken in $1,\dots,n$ by modulo $n$.
For $x\in V$,
let $A(x)\colon \mathfrak{g}[p]\to V$ be a linear map defined by $A(x)M:=d\rho(M)x$ $(M\in\mathfrak{g}[p])$.
Then, since $\dim\mathfrak{g}[p]=\dim V+1$, we see that $A(x)$ is a matrix of size $\dim V\times(\dim V+1)$.
By the above lemma, we have
{\tiny
\[
A(x)=
\left(
\begin{array}{c|ccccc|ccccc}
x_1&&&&&&-x_1&-x_1&0&\cdots&0\\
x_2&&&&&&0&-x_2&-x_2&\ddots&\vdots\\
\vdots&&&B&&&\vdots&\ddots&\ddots&\ddots&0\\
x_{n-1}&&&&&&0&&\ddots&-x_{n-1}&-x_{n-1}\\
x_n&&&&&&-x_n&0&\cdots&0&-x_n\\ \hline
0&0&\cdots&\cdots&\cdots&0&
y_1&0&0&\cdots&0\\
0&0&\cdots&\cdots&\cdots&0&
0&y_2&0&\ddots&\vdots\\
0&0&\cdots&\cdots&\cdots&0&
\vdots&\ddots&\ddots&\ddots&0\\
0&0&\cdots&\cdots&\cdots&0&
\vdots&&\ddots&y_{n-1}&0\\
0&0&\cdots&\cdots&\cdots&0&
0&0&\cdots&0&y_n
\end{array}
\right),
\]
}
where
\[B=\pmat{
y_n&-y_3&0&\cdots&\cdots&0\\
0&y_1&-y_4&\ddots&&\vdots\\
\vdots&0&y_2&\ddots&\ddots&\vdots\\
\vdots&\vdots&\ddots&\ddots&-y_n&0\\
0&0&\cdots&0&y_{n-2}&-y_1\\
-y_2&0&\cdots&\cdots&0&y_{n-1}
}.\]
Let $A'(x)$ be a square matrix obtained by removing the second column from $A(x)$.
Set
\[
B'=\pmat{
x_1&-y_3&0&\cdots&\cdots&0\\
x_2&y_1&-y_4&\ddots&&\vdots\\
\vdots&0&y_2&\ddots&\ddots&\vdots\\
\vdots&\vdots&\ddots&\ddots&-y_n&0\\
x_{n-1}&0&\cdots&0&y_{n-2}&-y_1\\
x_n&0&\cdots&\cdots&0&y_{n-1}
}.
\]
Then, we have 
\[\det A'(x)=
y_1\cdots y_n\times \det B'.
\]
By taking cofactor expansion on $B'$ at the first column,
the $i$-th row element is given as
\[
\begin{array}{l}
\ds
(-1)^{i+1}x_i\times \det
\underbrace{\pmat{-y_3&0&\cdots&0\\y_1&-y_4&\ddots&\vdots\\&\ddots&\ddots&0\\0&&y_{i-2}&-y_{i+1}}}_{i-1}
\times 
\det\underbrace{\pmat{y_{i}&-y_{i+3}&&0\\0&y_{i+1}&\ddots&\\ \vdots&\ddots&\ddots&-y_1\\0&\cdots&0&y_{n-1}}}_{n-i}\\
\ds
\quad
=(-1)^{i+1}x_i\times (-1)^{i-1}\bigl(y_3y_4\cdots y_{i+1}\bigr)
\times\bigl(y_{i+1}y_{i+2}\cdots y_{n-1}\bigr)\\
\ds
\quad
=
x_iy_iy_{i+1}\times y_3y_4\cdots y_{n-1}.
\end{array}
\]
Here, we ignore the first $\det$ when $i=1$, and ignore the last $\det$ when $i=n$,
and recall that indices run through $1,\dots,n$ modulo $n$.
This implies that
\[
\det A'(x)
=
y_1y_2y_3^2y_4^2\cdots y_{n-1}^2y_n\sum_{i=1}^nx_iy_iy_{i+1}
=
y_1y_2y_3^2y_4^2\cdots y_{n-1}^2y_n\times p(x),
\]
which shows that the triplet $(\mathfrak{g}[p],d\rho,V)$ is a prehomogeneous vector space,
and it is easily verified that polynomials $y_i=x_{n+i}$ $(i=1,\dots,n)$ are basic relative invariants.

Now let us investigate the dual prehomogeneous vector space.
Let us consider a linear map $A^*(x)\colon \mathfrak{h}\to V$ $(x\in V)$ defined by
$A^*(x)M:={}^{t\!}d\rho(M)x$ $(M\in\mathfrak{h})$.
Then, $A^*(x)$ is a matrix of size $2n\times(2n+1)$ given as
{\small
\[
A^*(x)=
\left(
\begin{array}{c|ccccc|cccccc}
x_1&-x_1&-x_1&&&&&&	\\
x_2&&-x_2&-x_2&&&&&&	\\
\vdots&&&\ddots&\ddots&&&&	\\
x_{n-1}&&&&-x_{n-1}&-x_{n-1}&&\\
x_n&-x_n&&&&-x_n&	\\ \hline
0&y_1&&&&&	x_2&0&&-x_{n-1}&0\\
0&&y_2&&&&	0&x_3&\ddots&&-x_n\\
\vdots&&&\ddots&&&	-x_1&\ddots&\ddots&0\\
\vdots&&&&\ddots&&	0&\ddots&\ddots&x_n&0\\
0&&&&&y_{n}&x_n	&0&-x_{n-2}&0&x_1\\
\end{array}
\right).
\]
}
Let us consider a rank of the following matrix.
\[
B(x):=
\pmat{x_1&-x_1&-x_1	\\
x_2&&-x_2&-x_2	\\
\vdots&&&\ddots&\ddots	\\
x_{n-1}&&&&-x_{n-1}&-x_{n-1}\\
x_n&-x_n&&&&-x_n}.
\]
If $n$ is even, then sum of columns $2,4,\dots,n$ is equal to ${}^{t\!}(-x_1,\dots,-x_n)$, and so is that of columns $3,5,\dots,n+1$.
This means that the rank of $B(x)$ must be smaller than or equal to $n-1$,
which implies $\mathrm{rank}\, A^*(x)\le 2n-1$.
Since the size of $A^*(x)$ is $2n$,
$A^*(x)$ cannot be full rank.
Thus, if $n\ge 5$ is even, then the dual triplet $(\mathfrak{g}[p],d\rho^*,V)$ cannot be a prehomogneous vector space.

Next, we assume that $n$ is odd.
We shall calculate a determinant of a matrix
obtained by removing the last column from $A^*(x)$.
\[
A'(x)=
\left(
\begin{array}{c|ccccc|ccccc}
x_1&-x_1&-x_1&&&&&&	\\
x_2&&-x_2&-x_2&&&&&&	\\
\vdots&&&\ddots&\ddots&&&&	\\
x_{n-1}&&&&-x_{n-1}&-x_{n-1}&&\\
x_n&-x_n&&&&-x_n&	\\ \hline
0&y_1&&&&&	x_2&0&&-x_{n-1}\\
0&&y_2&&&&	0&x_3&\ddots&\\
\vdots&&&\ddots&&&	-x_1&\ddots&\ddots&0\\
\vdots&&&&\ddots&&	0&\ddots&\ddots&x_n\\
0&&&&&y_{n}&x_n	&0&-x_{n-2}&0\\
\end{array}
\right).
\]
At first,
we add the columns $2,3,\dots,n$ and twice of the column $1$ 
to the column $n+1$
to obtain
{\small
\[
\det A'(x)=
\left(
\begin{array}{c|ccccc|ccccc}
x_1&-x_1&-x_1&&&0&&&	\\
x_2&&-x_2&\ddots&&0&&&&	\\
\vdots&&&\ddots&-x_{n-2}&\vdots&&&	\\
x_{n-1}&&&&-x_{n-1}&0&&\\
x_n&-x_n&&&0&0&	\\ \hline
0&y_1&&&&y_1&	x_2&0&&-x_{n-1}\\
0&&y_2&&&y_2&	0&x_3&\ddots&\\
\vdots&&&\ddots&&\vdots&	-x_1&\ddots&\ddots&0\\
\vdots&&&&\ddots&y_{n-1}&	0&\ddots&\ddots&x_n\\
0&&&&&y_{n}&x_n	&0&-x_{n-2}&0\\
\end{array}
\right).
\]
}
Thus, wee see that $\det A'(x)$ can be factorized as a product of
\[
A_1=
\det \pmat{x_1&-x_1&-x_1&&&\\
x_2&&-x_2&\ddots&&\\
\vdots&&&\ddots&-x_{n-2}	\\
x_{n-1}&&&&-x_{n-1}\\
x_n&-x_n&&&0}
\]
and
\[
A_2=
\det\pmat{
y_1&	x_2&0&&-x_{n-1}\\
y_2&	0&x_3&\ddots&\\
\vdots&	-x_1&\ddots&\ddots&0\\
y_{n-1}&	0&\ddots&\ddots&x_n\\
y_{n}&x_n	&0&-x_{n-2}&0
}.
\]
Let us calculate $A_1$ and $A_2$.
Set
\[
B_n=\det\pmat{1&-1&&&0\\
1&-1&-1	\\
\vdots&&\ddots&\ddots	\\
1&&&-1&-1\\
1&0&&&-1}.\]
Then, $A_1$ can be expressed, up to signature, as
\[
A_1=x_1\cdots x_n\cdot 
\det \pmat{1&-1&-1&&&\\
1&&-1&\ddots&&\\
\vdots&&&\ddots&-1\\
1&&&&-1\\
1&-1&&&0}
=
(\mathrm{sgn})x_1\cdots x_n\cdot B_n.
\]
By a cofactor expansion along the last row,
we see that $B_n$ can be calculated as
\[
\begin{array}{r@{\ }c@{\ }l}
B_n
&=&
(-1)^{n+1}\cdot(-1)^{n-1}+(-1)\cdot B_{n-1}
=
1-B_{n-1}\\
&=&
1-((-1)^{n-1+1}\cdot(-1)^{n-2}+(-1)\cdot B_{n-2})\\
&=&
B_{n-2}
=
\cdots
=
B_3=1,
\end{array}
\]
whence
\[
A_1=(\mathrm{sgn})\,x_1\cdots x_n.
\]
Next we consider $A_2$.
Let $n=2k+1$.
By changing an order of columns as $1$, $2,4,\dots,2k$ and then $3,5,\dots,2k+1$
and that of rows as $1,3,\dots,2k+1$ and then $2,4,\dots,2k$ 
to obtain
\[
A_2=\left|\begin{array}{c|cccc|cccc}
y_1&x_2&&&&&&&-x_{2k}\\
y_3&-x_1&x_4&&&&\\
\vdots&&\ddots&\ddots&&&\\
y_{2k-1}&&&-x_{2k-3}&x_{2k}\\
y_{2k+1}&&&&-x_{2k-1}\\ \hline
y_2&&&&&x_3\\
y_4&&&&&-x_2&x_5\\
\vdots&&&&&&\ddots&\ddots\\
y_{2k}&&&&&&&-x_{2k-2}&x_{2k+1}
\end{array}\right|.
\]
By using a cofactor expansion along the fist column,
we obtain
\[
\begin{array}{l}
\ds
A_2=\sum_{i=1}^{k+1}
(-1)^{i+1}y_{2i-1}\prod_{j=i}^{k}(-x_{2j-1})\cdot \prod_{j=1}^{i-1}x_{2j}\cdot
\prod_{j=1}^kx_{2j+1}\\
\ds
\qquad
+
\sum_{i=1}^k
(-1)^{k+i}y_{2i}\cdot (-1)^k \,x_{2k}\prod_{j=1}^{k}x_{2j-1}
\cdot
\prod_{j=i}^{k-1}(-x_{2j})
\prod_{j=1}^{i-1}x_{2j+1}\\
\ds
\quad
=
(-1)^kx_3x_5\cdots x_{2k-1}q(x,y),
\end{array}
\]
where $q$ is a homogeneous polynomial of degree $k+2$ defined by
\[
q(x,y):=\sum_{i=1}^{k+1}
y_{2i-1}x_{2i-1}x_{2i+1}\cdots x_{2k+1}\cdot x_2x_4\cdots x_{2i-2}
+
\sum_{i=1}^k
y_{2i}x_{2i}x_{2i+2}\cdots x_{2k}\cdot x_{1}x_3\cdots x_{2i-1}.
\]
Since $y_i=x_{n+i}$,
we see that
\[
q(x,y)=\sum_{i=1}^n x_{n+i}\prod_{j=0}^kx_{\varphi_n(i+2j)}=q_0(x)
\]
and thus a generic rank of $A^*(x)$ is full so that
the dual triplet $(\mathfrak{g}[p],d\rho^*,V)$ is a prehomogeneous vector space.
By structure of $\mathfrak{g}[p]$,
it is easily verified that polynomials $q_i(x)=x_i$ $(i=1,\dots,n)$ are relatively invariant under the action of $\mathfrak{g}[p]$.
The proof is now completed.
\end{proof}

\begin{Remark}
Let $n=2k+1$ be an odd number.
Then, $q_0(x)$ is a homogeneous polynomial of degree $k+2=\frac{n+3}{2}$.
This is an example that, for a given arbitrary integer $N$,
we can construct a relative invariant of degree $N$ of a prehomogeneous vector space.
\end{Remark}

In what follows, we deal with cases $n=3,4$.
Both cases correspond to prehomogeneous vector spaces,
and general elements $M$ of $\mathfrak{g}[p]$ are given 
for the case $n=3$ as
\[
M=\pmat{
t_0-t_1&0&0&X_{1}&0&0\\
0&t_0-t_2&0&0&X_{2}-X_1&0\\
0&0&t_0-t_3&0&0&-X_2\\
0&0&0&t_1&0&0\\
0&0&0&0&t_2&0\\
0&0&0&0&0&t_3
},
\]
and for the case $n=4$ as
{\tiny
\[
M=
\left(
\begin{array}{*{4}{c}|*{4}{c}}
t_0-t_1-t_2&Y_{12}&0&Y_{23}&	0&0&-X_{21}&X_{14}\\
Y_{12}&t_0-t_2-t_3&Y_{23}&0&	X_{21}&0&0&-X_{32}\\
0&Y_{32}&t_0-t_3-t_4&Y_{21}&	-X_{43}&X_{32}&0&0\\
Y_{32}&0&Y_{12}&t_0-t_4-t_1&	0&-X_{14}&X_{43}&0\\ \hline
0&0&0&0&	t_1&0&-Y_{21}&0\\
0&0&0&0&	0&t_2&0&-Y_{32}\\
0&0&0&0&	-Y_{12}&0&t_3&0\\
0&0&0&0&	0&-Y_{23}&0&t_4
\end{array}
\right).
\]
}
Note that both cases are excluded from detailed discussion in Section~\ref{sect:structure}.
In particular, the matrices of case $n=4$ has the same $(1,2)$ block of the case $n\ge 5$,
but it has additional variables $Y_{ij}$ in diagonal blocks.
Moreover, for the case $n=4$,
the prehomogeneous vector space $(\mathfrak{g}[p],d\rho,V)$ is not regular,
but its dual $(\mathfrak{g}[p],d\rho^*,V)$ is also a prehomogeneous vector space with a unique basic relative invariant
\[
q_0(x)=x_1x_3-x_2x_4\quad(x\in V).
\]
We are also able to confirm that $(\mathfrak{g}[p],d\rho,V)$ is not regular by checking that 
not all variables appear in $q_0(x)$.

\subsection{Edge gluing cases}

Let $\mathtt{T}$ be a triangle arrangement constructed by gluing $n$ triangle arrangements $\mathtt{T}_B$ at edges $24$ and $36$.
Note that $\dim V=4n+2$.
To each vertex, we attach variables $x_i,y_i$ $(i=0,1,\dots,n)$ and $z_j,w_j$ $(j=1,\dots,n)$ as in Figure~\ref{fig:mitsu}.
Then, the corresponding polynomial $p(x)$ is described as
\[
p(\bs{x})=\sum_{i=1}^n\bigl(x_ix_{i+1}z_i+x_iy_{i+1}w_i+x_{i+1}y_{i+1}w_i\bigr)\quad
(\bs{x}=(x,y,z,w)\in V).
\]
The order of basis of $V$ is $x$, $w$, $y$ and $z$.
Let $E_{ij}^{(m)}$ (resp.\ $E'_{ij}$) be a matrix unit of size $m\times m$ (resp.\ $n\times (n+1)$) having one on the position $(i,j)$ and zeros elsewhere.

\begin{figure}
    \centering
    \includegraphics[scale=0.4]{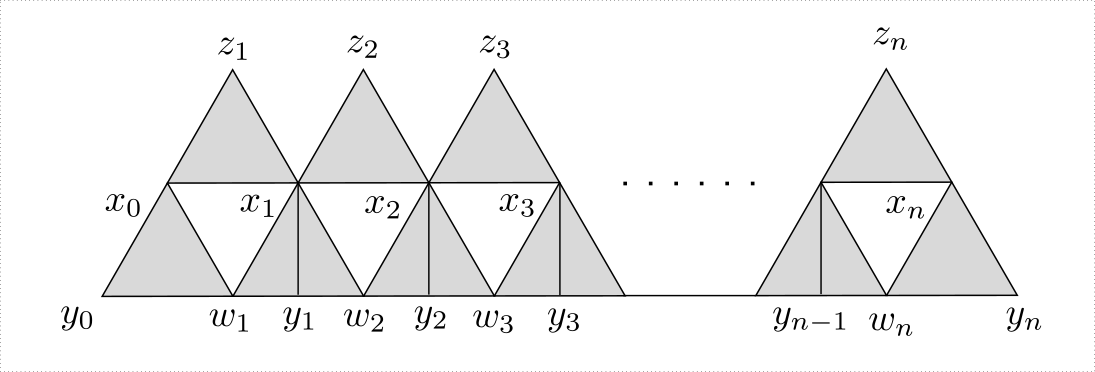}
    \caption{Labelling for edge gluing cases}
    \label{fig:mitsu}
\end{figure}

\begin{Lemma}
\label{lemma 5 10}
A general element $M$ in $\mathfrak{g}[p]$ is of the form
\[
M=\pmat{
M^{xx}&0&0&0\\
M^{wx}&M^{ww}&0&0\\
M^{yx}&0&M^{yy}&0\\
M^{zx}&M^{zw}&M^{zy}&M^{zz}
}
\]
where $M^{ab}$ $(a,b=x,y,z,w)$ are given as 
\[
\begin{cases}
M^{xx}=\mathrm{diag}(t_i)_{i=1}^{n+1},\\
M^{ww}=uI_n,\\
M^{yy}=\mathrm{diag}(t-u-t_i)_{i=1}^{n+1},\\
M^{zz}=\mathrm{diag}(t-t_i-t_{i+1})_{i=1}^n\\
\ds M^{wx}=\sum_{i=1}^{n-1}d_i(E'_{i,i+1}-E'_{i+1,i+1}),\\
\ds M^{zy}=\sum_{i=1}^{n-1}d_i(E'_{i+1,i+2}-E'_{ii})\\
\ds M^{yx}=\sum_{i=1}^nb_iE_{i,i+1}^{(n+1)}+c_iE_{i+1,i}^{(n+1)},\\
\ds M^{zw}=
-\sum_{i=1}^n
b_iE_{i,i-1}^{(n)}+(b_i+c_i)E_{ii}^{(n)}+c_iE_{i,i+1}^{(n)}\\
M^{zx}=\sum_{i=1}^{n-1}a_i(E'_{i,i+2}-E'_{i+1,i}).
\end{cases}
\]
\end{Lemma}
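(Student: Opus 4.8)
The computation is a direct application of the general analysis carried out in Section~\ref{sect:structure}, since the edge-gluing arrangement $\mathtt{T}$ has no edge sharing (the only shared edges between consecutive copies of $\mathtt{T}_B$ are the $24$ and $36$ edges, but as triangle arrangements these are glued along vertices only in the relevant hypergraph $\mathcal{T}$; the triangles themselves do not pairwise share two vertices, so there is no edge sharing in the sense of Section~\ref{sect:preliminaries}). Thus I would first set up the hypergraph $\mathcal{T}$ explicitly: for each $i=1,\dots,n$ the three triangles are $\{x_i,x_{i+1},z_i\}$, $\{x_i,y_{i+1},w_i\}$, and $\{x_{i+1},y_{i+1},w_i\}$. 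Then I would expand $\innV{d\rho(M)\bs{x}}{\nabla_{\bs{x}}p(\bs{x})}$ using \eqref{eq:expand} and collect the coefficient of each cubic monomial, organizing the work by the three cases $d_{\rm graph}=0,1,2$ as in Section~\ref{sect:structure}.

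The $d_{\rm graph}=0$ case gives, by (0) of Section~\ref{sect:structure}, the trace relations $M^{aa}_{ii}+M^{bb}_{jj}+M^{cc}_{kk}=0$ over each triangle $\{a_i,b_j,c_k\}\in\mathcal{T}$; solving these linear relations produces the diagonal entries displayed, i.e. $M^{xx}=\mathrm{diag}(t_i)$, $M^{ww}=uI_n$ (all $w$-entries equal because $w_i$ appears in two triangles $\{x_i,y_{i+1},w_i\}$ and $\{x_{i+1},y_{i+1},w_i\}$ sharing the edge-analogous pair, forcing a common value), $M^{yy}=\mathrm{diag}(t-u-t_i)$, and $M^{zz}=\mathrm{diag}(t-t_i-t_{i+1})$. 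The $d_{\rm graph}=1$ case contributes nothing by (1) of Section~\ref{sect:structure}. The heart of the computation is the $d_{\rm graph}=2$ case: here I would go through every pair of vertices at graph distance two and read off which monomials $x_a x_j x_k$ can appear from two different vertices. By (2-i), whenever vertex $i$ is isolated and lies in a triangle $\{i,j,k\}$ with the third vertex $b$ of a neighbouring triangle $\{j,a,b\}$ satisfying $\sharp\mathcal{T}(b)=1$, we get a relation $M_{ia}+M_{bk}=0$ of the form \eqref{eq:key eq}. Tracking these relations carefully for the chain-of-$\mathtt{T}_B$-blocks structure yields the off-diagonal blocks: the $b_i,c_i$ parameters in $M^{yx}$ paired with the corresponding entries of $M^{zw}$, and the $a_i$ in $M^{zx}$ paired with entries inside the same block. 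The parameters $d_i$ in $M^{wx}$ and $M^{zy}$ come from the isolated vertices along the glued $24$/$36$ edges, linking successive blocks.

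The main obstacle is bookkeeping: because each $\mathtt{T}_B$ block has an internal triangle $\{x_i,x_{i+1},z_i\}$ (so $x_i,x_{i+1}$ are \emph{not} isolated) together with the two "outer" triangles sharing $w_i$, the distance-two analysis of Section~\ref{sect:structure} splits into its sub-cases (2-i) versus (2-ii), and one must verify that the configurations falling under "rings of triangles" — the cases (2-i) with $\sharp\mathcal{T}(b)\ge 2$ and (2-ii) with $a$ linked to $s$ or $t$, which Section~\ref{sect:structure} leaves undiscussed — either do not occur here or contribute exactly the claimed parametrized families. Concretely, the triangles $\{x_i,x_{i+1},z_i\}$ and $\{x_{i+1},x_{i+2},z_{i+1}\}$ meet in the non-isolated vertex $x_{i+1}$, and the chain of such triangles is precisely what allows the $d_i$-type cross-block entries while still killing all other potential off-diagonal terms. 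Once every monomial coefficient is verified to be a $\C$-linear combination of the listed parameters with no further constraint, the displayed form of $M$ follows. I would present the key monomial coefficients (especially the ones producing $b_i$, $c_i$, $a_i$, $d_i$) explicitly and relegate the remaining "all other coefficients vanish identically" checks to a remark, exactly as Example~\ref{exam:lie} illustrates in miniature.
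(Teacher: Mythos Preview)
There is a genuine gap: your opening premise that $\mathtt{T}$ has no edge sharing is false, and this is precisely the point of case~(4). The paper remarks immediately after Theorem~\ref{theo:pv} that the edge-gluing examples show edge sharing is compatible with prehomogeneity. Concretely, with the labelling of Figure~\ref{fig:mitsu} (and using the polynomial as it is actually used in the proof, namely $p(\bs{x})=\sum_{i=1}^n(x_{i-1}x_iz_i+x_{i-1}y_{i-1}w_i+x_iy_iw_i)$; the displayed formula in the text has an index shift), the $i$-th block contains the triangle $\{x_i,y_i,w_i\}$ and the $(i{+}1)$-st block contains $\{x_i,y_i,w_{i+1}\}$, and these share the edge $\{x_i,y_i\}$. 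Even the triangle list you wrote down, read off the displayed polynomial, has $\{x_i,y_{i+1},w_i\}$ and $\{x_{i+1},y_{i+1},w_i\}$ sharing the edge $\{y_{i+1},w_i\}$; either way Section~\ref{sect:structure} does not apply.

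The failure is not cosmetic. Your step ``the $d_{\rm graph}=1$ case contributes nothing by (1) of Section~\ref{sect:structure}'' breaks down exactly where the new parameters live: the entries carrying the $d_i$ in $M^{wx}$ sit at graph distance~$1$. For instance $M^{wx}_{i,i-1}$ and $M^{wx}_{i-1,i-1}$ both correspond to adjacent vertex pairs, and neither is forced to vanish; instead, because the edge $\{x_{i-1},y_{i-1}\}$ lies in the two triangles $\{x_{i-1},y_{i-1},w_{i-1}\}$ and $\{x_{i-1},y_{i-1},w_i\}$, the monomial $x_{i-1}^2y_{i-1}$ receives contributions from both $w_{i-1}$ and $w_i$, giving only the relation $M^{wx}_{i-1,i-1}+M^{wx}_{i,i-1}=0$. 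Likewise the distance-$2$ analysis in (2-i) and (2-ii) of Section~\ref{sect:structure} explicitly stops short of the ring-type subconfigurations, and those do occur here. Accordingly the paper abandons the Section~\ref{sect:structure} shortcut and proves Lemma~\ref{lemma 5 10} by a direct expansion of $\innV{M\bs{x}}{\nabla_{\bs{x}}p(\bs{x})}$ from \eqref{eq:InvLie}: it lists the coefficient of each cubic monomial that can arise (about two dozen families) and solves the resulting linear system by hand. Your plan would have to do the same.
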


We shall give a proof of this lemma at the end of this sebsection.
This lemma shows that
$\mathfrak{g}[p]$ is a solvable Lie algebra and
$\dim \mathfrak{g}[p]=5n+1$.

\begin{Lemma}
\label{lemma 5 11}
Let $n\ge 2$.
Then, the triplet $(\mathfrak{g}[p],d\rho,V)$ is a regular prehomogeneous vector space.
Its basic relative invariants are described as
\[
p_i(\bs{x})=x_i\quad(i=0,1,\dots,n),\quad
p_{n+1}(\bs{x})=w_1+\cdots+w_n,\quad
p_{n+2}(\bs{x})=p(x).
\]
\end{Lemma}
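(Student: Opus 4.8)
The plan is to use Lemma~\ref{lemma} directly: given the explicit form of $\mathfrak{g}[p]$ from Lemma~\ref{lemma 5 10}, I would exhibit a subalgebra $\mathfrak{h}$ of $\mathfrak{g}[p]$ with $\dim\mathfrak{h}=\dim V=4n+2$ and compute the determinant of the square matrix $A(\bs{x})\colon\mathfrak{h}\to V$, $A(\bs{x})M:=d\rho(M)\bs{x}$, showing it is a nonzero polynomial. A natural choice for $\mathfrak{h}$ is to keep the $n+1$ torus parameters $t_i$ together with (say) the $n$ parameters $d_i$, the $2n$ parameters $b_i,c_i$ — this already gives $3n+1$ diagonal-and-off-diagonal degrees of freedom — and then adjoin enough of the remaining parameters ($a_i$, $u$, and $t$) to reach $4n+2$; the precise bookkeeping of which parameters to retain is routine once the block structure of $A(\bs{x})$ is written out against the basis order $x,w,y,z$.

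The key computational step is the determinant evaluation. Because $M$ is block lower-triangular with respect to the flag $x \to w,y \to z$, the matrix $A(\bs{x})$ inherits a banded/triangular block structure: the components of $d\rho(M)\bs{x}$ in the $w$-block involve $M^{ww}$ and $M^{wx}$ (hence $u,d_i$ acting on $w,x$), those in the $y$-block involve $M^{yy},M^{yx}$, and those in the $z$-block involve $M^{zz},M^{zw},M^{zy},M^{zx}$. I would order the rows of $A(\bs{x})$ so that a block-triangular pattern emerges and then perform cofactor expansions along the rows/columns carrying the torus parameters $t_i$ (these contribute linear factors in the $x_i$) and along the bidiagonal blocks coming from $M^{yx},M^{zw}$. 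The expectation — consistent with the chain and circular cases treated earlier — is that $\det A(\bs{x})$ factors as a monomial in the $x_i$'s and the $w_i$'s times $p(\bs{x})$ (and possibly times $\prod(w_1+\cdots+w_n)$), which simultaneously certifies prehomogeneity and identifies the basic relative invariants $p_i(\bs{x})=x_i$, $p_{n+1}(\bs{x})=w_1+\cdots+w_n$, $p_{n+2}(\bs{x})=p(\bs{x})$.

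To separate which $p_i$ are genuinely \emph{relatively invariant} under all of $\mathfrak{g}[p]$ (as opposed to merely relative invariants of the auxiliary $\mathfrak{h}$), I would read off the action of a general $M$ on each candidate: from $M^{xx}=\mathrm{diag}(t_i)$ one sees $x_i$ is an eigenvector with weight $t_i$; from $M^{ww}=uI_n$ and the fact that $M^{wx}=\sum d_i(E'_{i,i+1}-E'_{i+1,i+1})$ has column sums zero, one sees $w_1+\cdots+w_n$ transforms by the weight $u$; and $p(\bs{x})$ is relatively invariant by construction of $G[p]$. Finally, regularity follows from the standard criterion once one checks that $\det A(\bs{x})$, equivalently the product $\prod_{i=0}^n x_i\cdot(w_1+\cdots+w_n)\cdot p(\bs{x})$, involves all $4n+2$ coordinates (the $z_j$ appear through $p(\bs{x})$, the $y_j$ likewise), so the generic isotropy is finite.

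\textbf{Main obstacle.} The hard part will be organizing the cofactor expansion of the $(4n+2)\times(4n+2)$ matrix $A(\bs{x})$: the four blocks interact through $M^{zw},M^{zy},M^{zx}$ and through the bidiagonal matrices $M^{yx},M^{zw}$, so the determinant does not split as a clean product of small blocks without first choosing the right row/column ordering and the right subalgebra $\mathfrak{h}$. Getting that ordering right — so that the induction on $n$ (peeling off one copy of $\mathtt{T}_B$ at a time, in the spirit of the edge-gluing construction) closes — is where the real work lies; the algebraic identities afterward are of the same elementary type as in the chain and circular cases already handled.
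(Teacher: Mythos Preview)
Your approach is essentially the same as the paper's: the paper writes out the full $(4n+2)\times(5n+1)$ matrix $A(\bs{x})$ in block form, drops the $n-1$ columns corresponding to $c_0,\dots,c_{n-2}$ (rather than naming a subalgebra $\mathfrak{h}$, but this is equivalent), and computes the resulting square determinant directly via block reductions to obtain $\det B(\bs{x})=(\mathrm{sgn})\,x_0x_1^3(x_2\cdots x_{n-1})^4x_n^2\,(w_1+\cdots+w_n)\,p(\bs{x})$. Your parameter bookkeeping is slightly off (there are $n-1$ parameters $d_i$, not $n$, and your sum ``$3n+1$'' should read $4n+1$), but the strategy, the expected factorization, and the verification that $x_i$ and $w_1+\cdots+w_n$ are genuine relative invariants all match the paper.
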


\begin{proof}[Proof of Lemma~\ref{lemma 5 10}]
We shall calculate directly by using \eqref{eq:InvLie}.
The gradients of $p(x)$ can be calculated as

\[
\nabla_xp(\bs{x})=(\overset{i=0}{\overbrace{x_1z_1+y_0w_1}},\ \overset{i}{\overbrace{x_{i-1}z_i+y_iw_i+x_{i+1}z_{i+1}+y_iw_{i+1}}},\ 
\overset{i=n}{\overbrace{w_ny_n+x_{n-1}z_n}})
\]

\[
\nabla_yp(\bs{x})=(\overset{i=0}{\overbrace{x_0w_1}},\ \overset{i}{\overbrace{x_iw_i+x_iw_{i+1}}},\ \overset{i=n}{\overbrace{x_nw_n}})
\]

\[
\nabla_zp(\bs{x})=(\overset{i}{\overbrace{x_{i-1}x_i}})
\]

\[
\nabla_wp(\bs{x})=(\overset{i}{\overbrace{x_{i-1}y_{i-1}+x_iy_i}})
\]

Thus, $\innV{M\bs{x}}{\nabla_{\bs{x}}p(\bs{x})}$ can be expanded as
\[
\begin{array}{r@{\ }c@{\ }l}
\innV{M\bs{x}}{\nabla_{\bs{x}}p(\bs{x})}
&=&
\ds
\sum M^{zb}_{ij}b_jx_{i-1}x_i\\
& &
\ds
+\sum M^{wb}_{ij}b_j(x_{i-1}y_{i-1}+x_iy_i)\\
& &
\ds
+\sum M^{xb}_{sj}b_j(x_{s-1}z_s+y_sw_s+x_{s+1}z_{s+1}+y_sw_{s+1})\\
& &
\ds
+\sum M^{yb}_{sj}b_j(x_sw_s+x_sw_{s+1}).
\end{array}
\]
Here,
the summation symbol $\sum$ runs over all possible $i,j$ and $b=x,y,z,w$.
Among $M^{ab}_{ij}$ $(a,b=x,y,z,w)$,
candidates such that $M^{ab}_{ij}\ne 0$ are as follows.
\[
M^{xx}_{ss},
\]
\[
M^{yz}_{ii},\ M^{yz}_{i,i+1},\ M^{yy}_{ss},\ M^{yx}_{i,i-1},\ M^{yx}_{i-1,i}
\]
\[
M^{zx}_{j,j+1},\ M^{zx}_{j+1,j-1},\ M^{zy}_{ii},\ M^{zy}_{i,i-1},\ M^{zz}_{ii},\ M^{zz}_{j,j+1},\ M^{zz}_{j+1,j},\ M^{zw}_{ii},\ M^{zw}_{j,j+1},\ M^{zw}_{j+1,j}
\]
\[M^{wx}_{i,i-1},\ M^{wx}_{ii},\ M^{wz}_{ii},\ M^{ww}_{ii}\]
Here, we set
\[
s=0,1,\dots,n,\ i=1,\dots,n,\quad j=1,\dots,n-1.
\]

We shall calculate for each monomial.
\begin{enumerate}
\item $(M^{zz}_{i,i-1}+M^{xx}_{i-2,i})x_{i-1}x_iz_{i-1}$ $(i=2,\dots,n)$

Since $M^{xx}_{i-1,i}=0$m we have $M^{zz}_{i,i-1}=0$.

\item $(M^{zz}_{i,i+1}+M^{xx}_{i+1,i-1})x_{i-1}x_iz_{i+1}$ $(i=1,\dots,n-1)$

Since $M^{xx}_{i+1,i-1}=0$, we have $M^{zz}_{i,i+1}=0$.

\item $(M^{zx}_{i,i-2}+M^{zx}_{i-1,i})x_{i-2}x_{i-1}x_{i}$ $(i=2,\dots,n)$

We have $M^{zx}_{i,i-2}+M^{zx}_{i-1,i}=0$.

\item $(M^{zw}_{i,i-1}+M^{yx}_{i-1,i})x_{i-1}x_iw_{i-1}$ $(i=2,\dots,n)$

We have $M^{zw}_{i,i-1}+M^{yx}_{i-1,i}=0$

\item $(M^{zw}_{ii}+M^{yx}_{i-1,i}+M^{yx}_{i,i-1})x_{i-1}x_iw_i$ $(i=1,\dots,n)$

We have $M^{zw}_{ii}+M^{yx}_{i-1,i}+M^{yx}_{i,i-1}=0$

\item $(M^{zw}_{i,i+1}+M^{yx}_{i,i-1})x_{i-1}x_iw_{i+1}$ $(i=1,\dots,n-1)$

We have $M^{zw}_{i,i+1}+M^{yx}_{i,i-1}=0$

\item $(M^{zy}_{i,i-1}+M^{wx}_{i-1,i}+M^{wx}_{ii})x_{i-1}x_iy_{i-1}$ $(i=1,\dots,n)$

Since $M^{wx}_{i-1,i}=0$, we have $M^{zy}_{i,i-1}+M^{wx}_{ii}=0$

\item $(M^{zy}_{ii}+M^{wx}_{i,i-1}+M^{wx}_{i+1,i-1})x_{i-1}x_iy_i$ $(i=1,\dots,n)$

Since $M^{wx}_{i+1,i-1}=0$, we have $M^{zy}_{ii}+M^{wx}_{i,i-1}=0$

\item $(M^{xz}_{i,i-1}+M^{xz}_{i-1,i})x_{i-1}z_{i-1}z_i$ $(i=2,\dots,n)$

We have $M^{xz}_{i,i-1}=M^{xz}_{i-1,i}=0$

\item $(M^{xw}_{i,i-1}+M^{yz}_{i-1,i})x_{i-1}z_iw_{i-1}$ $(i=2,\dots,n)$

Since $M^{xw}_{i,i-1}=0$, we have $M^{yz}_{i-1,i}=0$

\item $(M^{xw}_{ii}+M^{yz}_{i-1,i})x_{i-1}z_iw_i$ $(i=1,\dots,n)$

Since $M^{xw}_{ii}=0$, we have $M^{yz}_{i-1,i}=0$

\item $(M^{xy}_{i,i-1}+M^{wz}_{i-1,i}+M^{wz}_{ii})x_{i-1}y_{i-1}z_i$ $(i=1,\dots,n)$

Since $M^{wz}_{i-1,i}=0$, we have $M^{xy}_{i,i-1}+M^{wz}_{ii}=0$

\item $(M^{xw}_{i-1,i}+M^{yz}_{ii})x_iz_iw_i$ $(i=1,\dots,n)$

Since $M^{xw}_{i-1,i}=0$, we have $M^{yz}_{ii}=0$

\item $(M^{xw}_{i-1,i+1}+M^{yz}_{ii})x_iz_iw_{i+1}$ $(i=1,\dots,n-1)$

Since $M^{xw}_{i-1,i+1}=0$, we have $M^{yz}_{ii}=0$

\item $(M^{xy}_{i-1,i}+M^{wz}_{ii}+M^{wx}_{i+1,i})x_iy_iz_i$ $(i=1,\dots,n)$

Since $M^{wx}_{i+1,i}=0$, we have $M^{xy}_{i-1,i}+M^{wz}_{ii}=0$

\item $(M^{yw}_{i-1,i-1}+M^{yw}_{i-1,i})x_{i-1}w_{i-1}w_i$ $(i=2,\dots,n)$

We have $M^{yw}_{i-1,i-1}=M^{yw}_{i-1,i}=0$

\item $(M^{yy}_{i-1,i}+M^{xx}_{i,i-1})x_{i-1}y_iw_i$ $(i=1,\dots,n)$

We have $M^{yy}_{i-1,i}=M^{xx}_{i,i-1}=0$

\item $(M^{xw}_{i-1,i-1}+M^{xw}_{i-1,i})y_{i-1}w_{i-1}w_i$ $(i=2,\dots,n)$

We have $M^{xw}_{i-1,i-1}=M^{xw}_{i-1,i}=0$

\item $(M^{xy}_{i-1,i}+M^{xy}_{i,i-1})y_{i-1}y_iw_i$ $(i=1,\dots,n)$

We have $M^{xy}_{01}=0$, $M^{xy}_{n,n-1}=0$ and
$M^{xy}_{i-1,i}+M^{xy}_{i,i-1}=0$

\item $(M^{wx}_{i-1,i-1}+M^{wx}_{i,i-1})x_{i-1}^2y_{i-1}$ $(i=1,\dots,n+1)$

We have $M^{wx}_{10}=0$, $M^{wx}_{nn}=0$ and $M^{wx}_{i-1,i-1}+M^{wx}_{i,i-1}=0$

\item[] Diagonal entries

\item $(M^{xx}_{i-1,i-1}+M^{xx}_{ii}+M^{zz}_{ii})x_{i-1}x_iz_i$ $(i=1,\dots,n)$

We have $M^{xx}_{i-1,i-1}+M^{xx}_{ii}+M^{zz}_{ii}=t$

\item $(M^{yy}_{i-1,i-1}+M^{xx}_{i-1,i-1}+M^{ww}_{i-1,i}+M^{ww}_{ii})x_{i-1}y_{i-1}w_i$ $(i=1,\dots,n)$

Since $M^{ww}_{i-1,i}=0$, we have
$M^{yy}_{i-1,i-1}+M^{xx}_{i-1,i-1}+M^{ww}_{ii}=t$

\item $(M^{yy}_{ii}+M^{xx}_{ii}+M^{ww}_{ii}+M^{ww}_{i+1,i})x_iy_iw_i$ $(i=1,\dots,n)$

Since
$M^{ww}_{i+1,i}=0$, we have
$M^{yy}_{ii}+M^{xx}_{ii}+M^{ww}_{ii}=t$
\end{enumerate}
Summing up the above discussion,
we see that 3 implies
\[
M^{zx}_{i,i-2}+M^{zx}_{i-1,i}=0\quad(i=2,\dots,n)
\]
($\dim = n-1$),
4,5,6 imply
\[
\begin{cases}
M^{zw}_{i,i-1}+M^{yx}_{i-1,i}=0&(i=2,\dots,n)\\
M^{zw}_{ii}+M^{yx}_{i-1,i}+M^{yx}_{i,i-1}=0&(i=1,\dots,n)\\
M^{zw}_{i,i+1}+M^{yx}_{i,i-1}=0&(i=1,\dots,n-1)
\end{cases}
\]
($\dim =2n$), 
7,8,20 tell us that
\[
\begin{cases}
M^{zy}_{i,i-1}+M^{wx}_{ii}=0&(i=1,\dots,n)\\
M^{zy}_{ii}+M^{wx}_{i,i-1}=0&(i=1,\dots,n)\\
M^{wx}_{i-1,i-1}+M^{wx}_{i,i-1}=0&(i=2,\dots,n-1)\\
M^{wx}_{10}=0\\
M^{wx}_{nn}=0
\end{cases}
\]
($\dim =n-1$).
Moreover, 12,15,19 yield that
\[
\begin{cases}
M^{xy}_{i,i-1}+M^{wz}_{ii}=0&(i=1,\dots,n)\\
M^{xy}_{i-1,i}+M^{wz}_{ii}=0&(i=1,\dots,n)\\
M^{xy}_{i-1,i}+M^{xy}_{i,i-1}=0&(i=2,\dots,n-1)\\
M^{xy}_{01}=0\\
M^{xy}_{n,n-1}=0
\end{cases}
\]
and hence one obtain
\[
M^{wz}_{ii}=0,\quad\text{whence}\quad
M^{xy}_{i,i-1}=M^{xy}_{i-1,i}=0
\]
($\dim = 0$). 
Diagonal entries are 
\[
\begin{cases}
M^{xx}_{i-1,i-1}+M^{xx}_{ii}+M^{zz}_{ii}=t&(i=1,\dots,n)\\
M^{yy}_{i-1,i-1}+M^{xx}_{i-1,i-1}+M^{ww}_{ii}=t&(i=1,\dots,n)\\
M^{yy}_{ii}+M^{xx}_{ii}+M^{ww}_{ii}=t&(i=1,\dots,n)
\end{cases}
\]
($\dim = n+3$). 
The second and  third lines shows that 
$M^{ww}_{ii}$ does not depend on $i$.
Therefore,
let us introduce new variables 
\[
a_{i}=M^{zx}_{i,i+1},\quad
d_i=M^{wx}_{ii}\quad(i=1,\dots,n-1),\quad
b_{j}=M^{yx}_{j,j+1},\quad
c_j=M^{yx}_{j+1,j}\quad(j=0,1,\dots,n-1)
\]
and
\[
t,\quad
M^x_i=M^{xx}_{i}\quad(i=0,1,\dots,n),\quad
M^y=g^{yy}_{00}
\]
as a basis.
Then we have
\[
M^{ww}_{ii}=t-M^x_0-y\quad(i=1,\dots,n),\quad
M^{yy}_{jj}=t-M^x_j-(t-g^x_0-y)
=
M^x_0-M^x_j+y\quad(j=0,1,\dots,n).
\]
An element $M\in\mathfrak{g}[p]$ can be described as a block matrix form of
\[
M=\pmat{
M^{xx}&0&0&0\\
M^{wx}&M^{ww}&0&0\\
M^{yx}&0&M^{yy}&0\\
M^{zx}&M^{zw}&M^{zy}&M^{zz}
},
\]
where

\[
\begin{array}{r@{\ }c@{\ }l}
M^{yx}
&=&
\ds
\pmat{
0&M^{yx}_{01}&0&\cdots&0\\
M^{yx}_{10}&0&M^{yx}_{12}&\ddots&\vdots\\
0&\ddots&\ddots&\ddots&0\\
\vdots&\ddots&M^{yx}_{n-1,n-2}&0&M^{yx}_{n-1,n}\\
0&\cdots&0&M^{yx}_{n,n-1}&0
}
\in\mathrm{Mat}(n+1;\,\C)\\
&=&
\ds
\pmat{
0&b_0&0&\cdots&0\\
c_0&0&b_1&\ddots&\vdots\\
0&\ddots&\ddots&\ddots&0\\
\vdots&\ddots&c_{n-2}&0&b_{n-1}\\
0&\cdots&0&c_{n-1}&0
}
\end{array}
\]
\[
\begin{array}{r@{\ }c@{\ }l}
M^{zx}
&=&
\ds
\pmat{
0&0&M^{zx}_{12}&0&\cdots&0\\
M^{zx}_{20}&0&0&\ddots&\ddots&\vdots\\
0&M^{zx}_{31}&\ddots&\ddots&M^{zx}_{n-2,n-3}&0\\
\vdots&\ddots&\ddots&&0&M^{zx}_{n-1,n-2}\\
0&\cdots&0&M^{zx}_{n,n-2}&0&0
}\in\mathrm{Mat}(n\times (n+1);\,\C)\\
&=&
\ds
\pmat{
0&0&a_1&0&\cdots&0\\
-a_1&0&0&\ddots&\ddots&\vdots\\
0&-a_2&\ddots&\ddots&a_{n-2}&0\\
\vdots&\ddots&\ddots&&0&a_{n-1}\\
0&\cdots&0&-a_{n-1}&0&0
}
\end{array}
\]
\[
\begin{array}{r@{\ }c@{\ }l}
M^{wx}
&=&
\ds
\pmat{
M^{wx}_{10}&M^{wx}_{11}&0&\cdots&0\\
0&M^{wx}_{21}&M^{wx}_{22}&\ddots&\vdots\\
\vdots&\ddots&\ddots&\ddots&0\\
0&\cdots&0&M^{wx}_{n,n-1}&M^{wx}_{nn}
}
\in\mathrm{Mat}(n\times(n+1);\,\C)\\
&=&
\ds
\pmat{
0&d_1&0&\cdots&0&0\\
0&-d_1&d_{2}&\ddots&\vdots&\vdots\\
\vdots&\ddots&\ddots&\ddots&0&0\\
&&&-d_{n-2}&d_{n-1}&0\\
0&\cdots&\cdots&0&-d_{n-1}&0
}
\end{array}
\]
\[
\begin{array}{r@{\ }c@{\ }l}
M^{zy}
&=&
\ds
\pmat{
M^{zy}_{10}&M^{zy}_{11}&0&\cdots&0\\
0&M^{zy}_{21}&M^{zy}_{22}&\ddots&\vdots\\
\vdots&\ddots&\ddots&\ddots&0\\
0&\cdots&0&M^{zy}_{n,n-1}&M^{zy}_{nn}
}
\in\mathrm{Mat}(n\times(n+1);\,\C)\\
&=&
\ds
\pmat{
-d_{1}&0&0&\cdots&0&0\\
0&-d_{2}&d_{1}&\ddots&\vdots&\vdots\\
\vdots&\ddots&\ddots&\ddots&0&0\\
0&\cdots&0&-d_{n-1}&d_{n-2}&0\\
0&\cdots&0&0&0&d_{n-1}
}
\end{array}
\]
\[
\begin{array}{r@{\ }c@{\ }l}
M^{zw}
&=&
\ds
\pmat{
M^{zw}_{11}&M^{zw}_{12}&0&\cdots&0\\
M^{zw}_{21}&M^{zw}_{22}&M^{zw}_{23}&\ddots&\vdots\\
0&\ddots&\ddots&\ddots&0\\
\vdots&\ddots&M^{zw}_{n-1,n-2}&M^{zw}_{n-1,n-1}&M^{zw}_{n-1,n}\\
0&\cdots&0&M^{zw}_{n,n-1}&M^{zw}_{nn}
}
\in\mathrm{Mat}(n\times n;\,\C)\\
&=&
\ds
\pmat{
-b_0-c_0&-c_0&0&\cdots&0\\
-b_1&-b_1-c_1&-c_1&\ddots&\vdots\\
0&\ddots&\ddots&\ddots&0\\
\vdots&\ddots&-b_{n-2}&-b_{n-2}-c_{n-2}&-c_{n-2}\\
0&\cdots&0&-b_{n-1}&-b_{n-1}-c_{n-1}
}
\end{array}
\]
This proves the assertion.
\end{proof}

\begin{proof}[Proof of Lemma~\ref{lemma 5 11}]
Let $A(\bs{x})M:=d\rho(M)\bs{x}$.
We set an order of $\mathfrak{g}[p]$ by
$M^x_0,M^x_1,\dots,M^x_n,M^y,t$, and then $d_i$, $b_i$, $c_i$, $a_i$.
Recall that the order of $V$ is taken as $x,w,y,z$.
Then, $A(\bs{x})$ can be described as a block matrix form as
\[
A(\bs{x})=
\pmat{
D_1(x)&0&0&0&0\\
D_2(w)&X_1&0&0&0\\
D_3(y)&0&X_2&X_3&0\\
D_4(z)&Y_1&W_1&W_2&X_4
},
\]
where
\[
D_1(x)=\pmat{x_0&&0&0&0\\ &\ddots&&0&0\\0&&x_n&0&0}
\]
\[
D_2(w)=\pmat{
-w_1&0\cdots&0&-w_1&w_1\\
\vdots&\vdots\cdots&\vdots&\vdots&\vdots\\
-w_n&0\cdots&0&-w_n&w_n\\
}
\]
\[
D_3(y)=
\pmat{
0&0&\cdots&0&y_0&0\\
y_1&-y_1&&0&y_1&0\\
\vdots&&\ddots&\vdots&\vdots\\
y_n&0&&-y_n&y_n&0
}
\]
\[
D_4(z)=
\pmat{
-z_1&-z_1&&&0&0&z_1\\
&-z_2&-z_2&&&0&z_2\\
&&\ddots&\ddots&&\vdots&\vdots\\
0&&&-z_n&-z_n&0&z_n
}
\]
\[
X_1=\pmat{
x_1\\
-x_1&x_2\\
&\ddots&\ddots\\
&&-x_{n-2}&x_{n-1}\\
&&&-x_{n-1}
}
\]
\[
(X_2X_3)=
\left(\begin{array}{ccc|ccccc}
x_1&&&
	0&\cdots&0\\
&\ddots&&
	x_0\\
&&x_n&
	&\ddots\\
0&\cdots&0&
	&&x_{n-1}
\end{array}\right)
\]
\[
X_4=
\pmat{
x_2\\
-x_0&x_3\\
&\ddots&\ddots\\
&&-x_{n-3}&x_{n}\\
&&&-x_{n-2}
}
\]
\[
Y_1=\pmat{
-y_0\\
y_2&-y_1\\
&\ddots&\ddots\\
&&y_{n-1}&-y_{n-2}\\
&&&y_n
}
\]
\[
(W_1W_2)=
\left(\begin{array}{cccc|ccccc}
-w_1&&&&
	-w_1-w_2\\
&-w_1-w_2&&
	&&\ddots\\
&&\ddots&
	&&&-w_{n-1}-w_n\\
&&&-w_{n-1}-w_n&
	&&&-w_n
\end{array}\right)
\]
Removing columns corresponding to $c_0,c_1,\dots,c_{n-2}$ from $A(\bs{x})$,
we obtain a square matrix $B(\bs{x})$ of size $4n+2$ as
\[
B(\bs{x})
=   
\begin{array}{cc}
\begin{array}{cccccc}
n+1&\quad1&\quad1&n-1&n+1&n-1
\end{array}&\\
\left(
\begin{array}{cccccc}
D'_1(x)&0&0&0&0&0\\
D'_2(w)&-\bs{w}&\bs{w}&X_1&0&0\\
D'_3(y)&0&\bs{y}&0&D_5(x)&0\\
D'_4(z)&0&\bs{z}&Y_1&W'&X_2
\end{array}
\right)&\begin{array}{l}
n+1\\ n\\ n\\ n+1
\end{array}
\end{array}
\]
where
$D'_i$ are matrices obtained by removing the last two columns from $D_i$,
and $D_5(x)$, $W'$ are matrices defined by
\[
D_5(x)=\mathrm{diag}(x_1,x_2,\dots,x_n,x_{n-1})
\]
\[
W'=\pmat{
-w_1&&&&\\
&-w_1-w_2&&\\
&&\ddots&\\
&&&-w_{n-1}-w_n&-w_n}
\]
Let us calculate $\det B(\bs{x})$.
\[
\begin{array}{r@{\ }c@{\ }l}
\det B(\bs{x})
&=&
\ds
\det D'_1(x)
\det\pmat{-\bs{w}&\bs{w}&X_1&0&0\\
0&\bs{y}&0&D_5(x)&0\\
0&\bs{z}&Y_1&W'&X_4}\\
&=&
\ds
(\mathrm{sgn})
\det D'_1(x)
\det\pmat{
\bs{w}&X_1&0&0&0\\
0&0&\bs{y}&D_5(x)&0\\
0&Y_1&\bs{z}&W'&X_4}\\
&=&
\ds 
(\mathrm{sgn})
\det D'_1(x)
\det \pmat{\bs{w}&X_1}
\det\pmat{
\bs{y}&D_5(x)&0\\
\bs{z}&W'&X_4}.
\end{array}
\]
Then, we are able to continue a calculation on $\det \pmat{\bs{w}&X_1}$ as follows.
Since a signature of its determinant does not affect to a result we want to prove,
we omit to calculate signatures and write just $(\mathrm{sgn})$ instead. 
\[
\begin{array}{r@{\ }c@{\ }l}
\det \pmat{\bs{w}&X_1}
&=&
\ds
\det
\smat{
w_1&x_1\\
w_2&-x_1&x_2\\
\vdots&&\ddots&\ddots\\
\vdots&&&-x_{n-2}&x_{n-1}\\
w_n&&&&-x_{n-1}
}
=
x_1\cdots x_{n-1}
\det
\smat{
w_1&1\\
w_2&-1&1\\
\vdots&&\ddots&\ddots\\
\vdots&&&-1&1\\
w_n&&&&-1
}\\
&=&
\ds
\mathrm{(sgn)}
x_1\cdots x_{n-1}(w_1+\cdots+w_n).
\end{array}
\]
Recall a formula $\det\smat{A&B\\C&D}=\det A\det(D-BA^{-1}C)$ when $\det A\ne 0$ for block matrices.
Using this formula, we can proceed a calculation as follows.
\[
\begin{array}{r@{\ }c@{\ }l}
\det\pmat{
\bs{y}&D_5(x)&0\\
\bs{z}&W'&X_4}
&=&
(\mathrm{sgn})
\det\pmat{
D_5(x)&\bs{y}&0\\
W'&\bs{z}&X_4}\\
&=&
(\mathrm{sgn})
\det D_5(x)
\det\left(
\pmat{\bs{z}&X_4}-W'D_5(x)^{-1}\pmat{\bs{y}&0}
\right)\\
&=&
(\mathrm{sgn})\det D_5(x)
\det\left(
\bs{z}'|X_4
\right),
\end{array}
\]
where we set
\[
\bs{z}'
=
\pmat{
z_1+w_1\frac{y_0}{x_1}\\
z_2+(w_1+w_2)\frac{y_1}{x_2}\\
\vdots\\
z_{n-1}+(w_{n-2}+w_{n-1})\frac{y_{n-2}}{x_{n-1}}\\
z_n+(w_{n-1}+w_n)\frac{y_{n-1}}{x_n}+w_n\frac{y_n}{x_{n-1}}
}.
\]
Thus, we obtain
\[
\begin{array}{r@{\ }c@{\ }l}
\det(X_2|\bs{z}')
&=&
\det
\pmat{
z'_1&x_2&&&\\
z'_2&-x_0&x_3&&\\
&\ddots&\ddots&&\vdots\\
z'_{n-1}&&&-x_{n-3}&x_{n}\\
z'_n&&&&-x_{n-2}
}\\
&=&
\ds
\sum_{i=1}^n
(-1)^{i+1}z'_i
\prod_{j=i-1}^{n-2}(-x_j)
\prod_{k=2}^i x_k\\
&=&
\ds
x_2\cdots x_{n-2}\sum_{i=1}^n x_{i-1}x_iz'_i\\
&=&
\ds
x_2\cdots x_{n-2}
\left(
x_0x_1z_1+x_0w_1y_0+\sum_{i=2}^n\Bigl(
x_{i-1}x_iz_i+x_{i-1}w_{i-1}y_{i-1}+x_{i-1}w_iy_{i-1}
\Bigr)
+
x_nw_ny_n
\right)\\
&=&
\ds
x_2\cdots x_{n-2}
\sum_{i=1}^n
\Bigl(x_{i-1}x_iz_i+x_{i-1}y_{i-1}w_i+x_iy_iw_i\Bigr)
=
x_2\cdots x_{n-2}p(\bs{x})
\end{array}
\]
Summing up the above calculation,
we have obtained
\[
\det B(\bs{x})=(\mathrm{sgn})\,x_0x_1^3(x_2\cdots x_{n-1})^4x_n^2\,(w_1+\cdots+w_n)\,p(\bs{x}).
\]
This shows that a general rank of $A(\bs{x})$ is equal to $4n+2$,
which implies that the triplet $(\mathfrak{g}[p],\rho,V)$ is a prehomogeneous vector space.
By a structure of $\mathfrak{g}[p]$,
it is easily verified that polynomials
\[
x_0,x_1,\dots,x_n\quad\text{and}\quad
w_1+\cdots+w_n.
\]
which are irreducible factors of $\det B(\bs{x})$,
are relatively invariant under the action of $\mathfrak{g}[p]$.
\end{proof}



\newpage 

\begin{figure}
    \centering
    \includegraphics[scale=0.4]{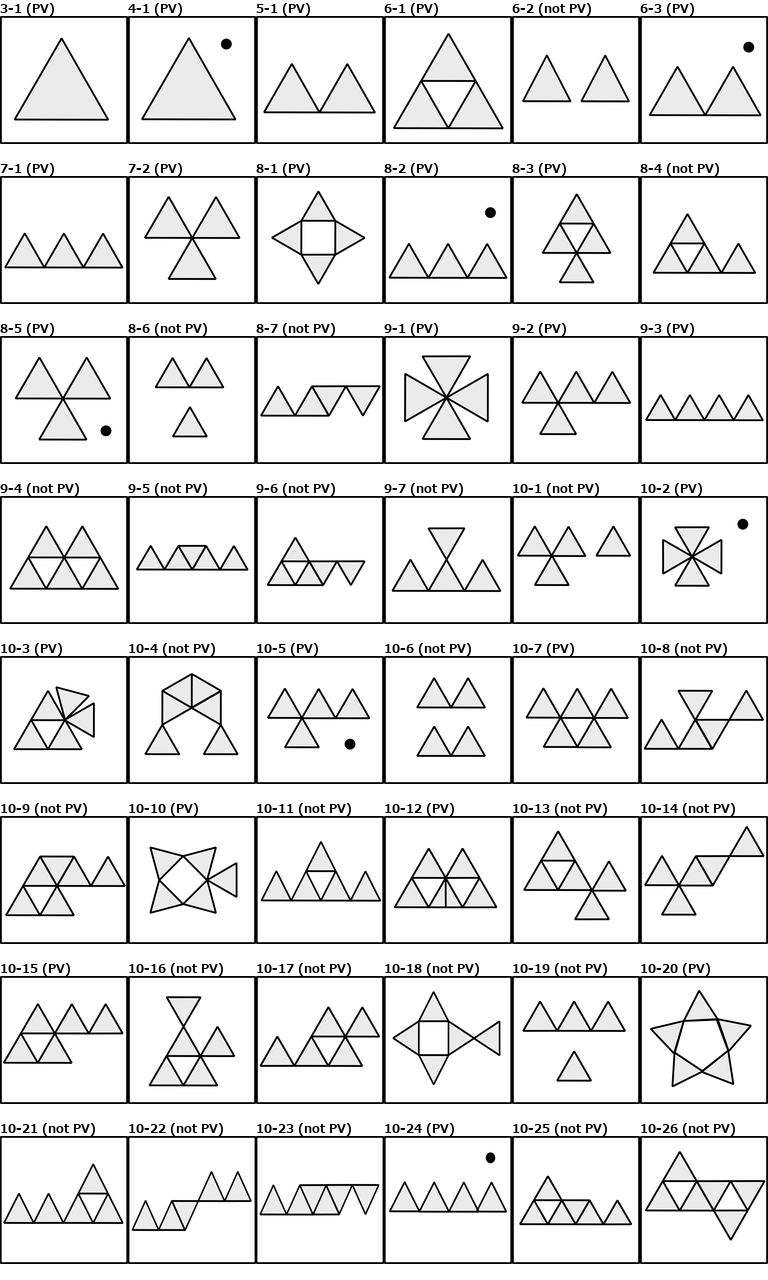}
    \caption{A table of triangle arrangements obtained by reduction of triangulation of $n$-polygon up to $n\le 10$.
    }
    \label{fig:trig}
\end{figure}
\end{document}